\title[Connecting $\mathcal{W}$-algebras on $\sll_4$]{Connecting affine $\mathcal{W}$-algebras: A case study on $\sll_4$}
\author[Justine Fasquel]{Justine Fasquel}
\address[J.F.]{School of Mathematics and Statistics, University of Melbourne, Parkville, Australia, 3010}
\email{justine.fasquel@unimelb.edu.au}
\author[Zachary Fehily]{Zachary Fehily}
\address[Z.F.]{School of Mathematics and Statistics, University of Melbourne, Parkville, Australia, 3010}
\email{fehilyz@unimelb.edu.au}
\author[Ethan Fursman]{Ethan Fursman}
\address[E.F.]{School of Mathematics and Statistics, University of Melbourne, Parkville, Australia, 3010}
\email{ethan.fursman@unimelb.edu.au}
\author[Shigenori Nakatsuka]{Shigenori Nakatsuka}
\address[S.N.]{Research Institute for Mathematical Sciences, Kyoto University, Kyoto 606-8502, Japan}
\email{shigenori.nakatsuka.2022@gmail.com}
\definecolor{rouge}{rgb}{0.85,0.1,.4}
\definecolor{bleu}{rgb}{0.1,0.2,0.9}
\definecolor{violet}{rgb}{0.7,0,0.8}
\tikzset{>=latex} \usetikzlibrary{backgrounds} \usetikzlibrary{shapes.geometric}
\newtheorem{definition}{Definition}[section]
\newtheorem{proposition}[definition]{Proposition}
\newtheorem{theorem}[definition]{Theorem}
\newtheorem{ThmLetter}{Theorem}
\newtheorem{corollary}[definition]{Corollary}
\newtheorem{CorLetter}{Corollary}
\newtheorem{lemma}[definition]{Lemma}
\theoremstyle{remark}
\newtheorem{remark}[definition]{Remark}
\numberwithin{equation}{section}
\newcommand{\Z}{\mathbb{Z}}
\newcommand{\Q}{\mathbb{Q}}
\newcommand{\C}{\mathbb{C}}
\newcommand{\D}{\mathcal{D}}
\newcommand{\W}{\mathcal{W}}
\newcommand{\V}{\mathcal{V}}
\newcommand{\kk}{\mathsf{k}}   % Symbol for level
\newcommand{\g}{\mathfrak{g}}
\newcommand{\h}{\mathfrak{h}}
\newcommand{\n}{\mathfrak{n}}
\newcommand{\nil}{\mathfrak{n}}
\newcommand{\sll}{\mathfrak{sl}}
\newcommand{\gl}{\mathfrak{gl}}
\newcommand{\SL}{\mathrm{SL}}
\newcommand{\fH}[1]{\mathrm{H}_{\,\ydiagram{#1}\,}}                                         %reduction functor
\newcommand{\HH}{\mathrm{H}}
\newcommand{\OO}{\mathbb{O}}
\newcommand{\fHdeg}[2]{\mathrm{H}^{#2}_{\,\ydiagram{#1}\,}}
\newcommand{\OHdeg}[2]{\mathrm{H}^{#2}_{\,{#1}\,}} 
\newcommand{\fd}[1]{d_{\,\ydiagram{#1}\,}}                                                  %differential
\newcommand{\pW}[1]{\W^{\kk}(\,\ydiagram{#1}\,)}
\newcommand{\pWlevel}[2]{\W^{#2}(\,\ydiagram{#1}\,)}
\newcommand{\pBRST}[1]{C^\bullet_{\ydiagram{#1}}}
\newcommand{\pWak}[2]{\mathbb{W}^{\kk}_{{#2}}(\,\ydiagram{#1}\,)}
\newcommand{\affWak}[1]{\mathbb{W}^{\kk}_{{#1}}}
\newcommand{\presol}[2]{\mathrm{C}^{\kk}_{{#2}}(\,\ydiagram{#1}\,)}
\newcommand{\heis}{\pi^{\kk+4}_\mathfrak{h}}
\newcommand{\Fock}[1]{\pi^{\kk+4}_{\mathfrak{h},{#1}}}
\newcommand{\bg}{\beta\gamma}
\newcommand{\ff}[1]{\beta\gamma^{\{#1\}}}
\newcommand{\s}[2]{S_{{#1}}^{\,\ydiagram{#2}\,}}
\newcommand{\ts}[2]{\tilde{S}_{{#1}}^{\,\ydiagram{#2}\,}}
\newcommand{\df}[1]{#1{}^\natural}
\newcommand{\stgen}[1]{\langle #1 \rangle}
\newcommand{\bt}[1]{\beta_{\mathrm{#1}}}
\newcommand{\gm}[1]{\gamma_{\mathrm{#1}}}
\newcommand{\longstick}[2]{\left. {#1} \right|_{#2}}
\newcommand{\cl}[1]{\overline{#1}}
\newcommand{\lbr}[2]{\{ #1 {}_\lambda #2 \}}
\newcommand{\finW}[1]{\mathscr{U}(\, \ydiagram{#1} \,)}
\renewcommand{\ker}{\operatorname{Ker}}
\newcommand{\hlat}{\Pi}                          		                                    % Half-lattice VA
\newcommand{\nosimple}[1]{#1}  % normal ordering 
\newcommand{\intNOz}[1]{\int Y(\nosimple{#1},z) \ \mathrm{d}z}                                         % Integral over z 
\newcommand{\scrNO}[2]{\intNOz{{#2}\mathrm{e}^{-\frac{1}{\kk+4}\alpha_{#1}}}}                 % screenings using Zac's super macro
\newcommand{\fockIO}[1]{\mathrm{e}^{-\frac{1}{\kk+4}\alpha_{#1}}}
\newcommand{\ee}{\mathsf{e}}                                                                % Euler's number
\newcommand{\sFMS}{S_\text{FMS}}                                                            % Symbol for FMS screening operator
\newcommand{\wun}{\mathbbm{1}}                                                              % vacuum
\newcommand{\dd}{\mathrm{d}}
\newcommand{\Span}{\operatorname{Span}}
\newcommand{\wt}{\operatorname{wt}}
\newcommand{\gr}{\mathrm{gr}}
\newcommand{\KL}{\mathbf{KL}}
\newcommand{\weyl}{\mathbb{V}}
\newcommand{\hwt}[1]{\mathrm{e}^{{#1}}}
\newcommand{\EM}[1]{E_{#1}}             %elementary matrices
\newcommand{\coffaa}{{\beta_1}}
\newcommand{\coffab}{\beta_2-\gamma_1\beta_4}
\newcommand{\coffac}{\gamma_4}
\newcommand{\coffba}{\wun}
\newcommand{\coffbb}{\beta_2}
\newcommand{\coffbc}{\beta_3-\gamma_2\beta_5}
\newcommand{\coffca}{\beta_1}
\newcommand{\coffcb}{-\gamma_1+\gamma_3}
\newcommand{\coffcc}{\beta_3}
\newcommand{\coffda}{\beta_1}
\newcommand{\coffdb}{\wun-\gamma_1\gamma_3}
\newcommand{\coffdc}{\beta_3}
\newcommand{\coffea}{\wun}
\newcommand{\coffeb}{\wun}
\newcommand{\coffec}{\beta_3}
\newcommand{\cofffa}{\wun}
\newcommand{\cofffb}{\beta_2}
\newcommand{\cofffc}{\gamma_2}
\newcommand{\coffga}{\wun}
\newcommand{\coffgb}{\wun}
\newcommand{\coffgc}{\wun}
\newcommand{\bideg}{\text{bideg}}
\DeclareRobustCommand{\rchi}{{\mathpalette\irchi\relax}}
\newcommand{\irchi}[2]{\raisebox{\depth}{$#1\chi$}}
\newcommand{\std}[1]{d^{\text{st}}_{\,\ydiagram{#1}\,}}                                     %st differential
\newcommand{\chid}[1]{d^{\rchi}_{\,\ydiagram{#1}\,}}                                             %chi differential
\DeclareFontFamily{U}{mathx}{}
\DeclareFontShape{U}{mathx}{m}{n}{<-> mathx10}{}
\DeclareSymbolFont{mathx}{U}{mathx}{m}{n}
\DeclareMathAccent{\widecheck}{0}{mathx}{"71}
\newcommand{\numtableaux}[1]{
\ytableausetup{boxsize=1.15em,aligntableaux=bottom}
\begin{ytableau}
    #1
\end{ytableau}
\ytableausetup{boxsize=0.4em}}
\newcommand{\coloredtableaux}[2]{
{\ytableausetup{centertableaux,boxsize=1em,aligntableaux=bottom}
\ytableaushort{}
* {#1}
* [*(lightgray)]{#2} \ytableausetup{boxsize=0.4em}}}
\newcommand\doi[2]{\href{http://dx.doi.org/#1}{#2}}
\begin{document}

\begin{abstract}
    We introduce the partial reductions and inverse Hamiltonian reductions between affine $\W$-algebras along the closure relations of associated nilpotent orbits in the case of $\sll_4$, fulfilling all the missing constructions in the literature.
    We also apply the partial reductions to modules in the Kazhdan--Lusztig category and show compatibility with the usual reductions of Weyl modules.
\end{abstract}

\maketitle
%\tableofcontents

\section{Introduction}

Affine $\W$-algebras are families of vertex algebras associated with nilpotent elements (orbits) of simple finite-dimensional Lie algebras $\g$.
In the early 90s, Feigin--Frenkel \cite{FF90} introduced the principal $\W$-algebras, associated to regular nilpotent elements, as quantum Hamiltonian reductions -- called quantized Drinfeld--Sokolov reductions -- of the affine vertex algebras $\V^\kk(\g)$.
Later this construction was extended to arbitrary nilpotent elements by Kac--Roan--Wakimoto \cite{KRW03} resulting in the definition commonly used today:
if $\OO$ a nilpotent orbit of $\g$ and $\kk\in\C$ a complex parameter, the $\W$-algebra associated to $\OO$ at level $\kk$ is given by
\begin{equation}
    \W^\kk(\OO):=\OHdeg{\OO}{}(\V^\kk(\g)).
\end{equation}

The quantum Hamiltonian reductions are naturally upgraded to functors from the category of modules over the affine vertex algebras to the category of the modules over the corresponding affine $\W$-algebras but they are difficult to control in general.
A promising approach has emerged recently to remedy this.
The rough idea lies in splitting the reduction functors into some more fundamental ones and applying these successive partial reductions which are better understood. This process is known as \emph{reduction by stages}.
The terminology is inspired by Morgan's PhD thesis \cite{Morgan14} 
and Genra--Juillard \cite{GJ23} who worked on equivalent technology in the context of finite $\W$-algebras.

Regarding the affine analogue, the first explicit example of reduction by stages for the affine $\W$-algebras goes back to an earlier work in 90s of Madsen--Ragoucy \cite{MR97} who showed that the principal $\W$-algebra of $\sll_3$ can be obtained as a certain reduction of the Bershadsky--Polyakov vertex algebra, which is the $\W$-algebra of $\sll_3$ associated with the minimal nilpotent orbit.

More generally, there is a partial order on nilpotent orbits of a Lie algebra $\g$ given by the inclusion of their closures for the Zarisky topology, that is $\OO_1 \leq \OO_2$ if $\OO_1\subset\overline{\OO}_2$. This lifts to a natural partial order on the family of $\W$-algebras associated to the same Lie algebra $\g$.
In type $A$, i.e. $\g\simeq\sll_n$, the nilpotent orbits are parametrized by partitions of $n$ and the partial order matches the dominance order on these partitions.
It is believed that the reduction by stages respects this partial order in the following sense: if $\OO_1 \leq \OO_2$, there should exist a functor reducing the $\W$-algebra associated with $\OO_1$ to the one with $\OO_2$.
Such examples supporting this conjecture were obtained recently in \cite{CFLN}.

The phenomenon of reduction by stages seems to be remarkably ubiquitous in closely related areas.
For instance, similar decompositions arise in the description of Whittaker models for the general linear groups over local fields in number theory \cite{GGS17}.
It is also the key ingredient in the construction of \emph{webs of $\W$-algebras} by Proch\'azka--Rap\v{c}\'ak \cite{PR18} in physics, which describe vertex algebras as chiral algebras appearing in the two-dimensional junctions of supersymmetric interfaces in the four-dimensional $N=4$ super Yang-Mills gauge theories.
A broader perspective might be better understood from the connection with the quantum geometric Langlands program \cite{FG20}.

The quantum Hamiltonian reductions from $\V^\kk(\g)$ to $\W$-algebras are known to behave nicely \cite{Ara15a}. This comes from the freeness of the Hamiltonian action appearing in the classical limit.
Therefore, it might be reasonable to hope for a way to recover $\V^\kk(\g)$ itself from the $\W$-algebras. 
It is commonly called \emph{inverse Hamiltonian reductions} and formulated as (conformal) embeddings of $\W$-algebras into tensor products of other $\W$-algebras and free field algebras when the associated nilpotent orbits satisfy the closure relation.

Inverse Hamiltonian reductions have a long history which takes its origins in physics.
The first embedding was described by Semikhatov \cite{Sem94} in string theory and relates the affine vertex algebra $\V^\kk(\sll_2)$ and the Virasoro algebra, i.e. the principal $\W$-algebra of $\sll_2$ \cite{Ad19}. 
More examples were worked out recently in small ranks \cite{ACG23, AKR21, CFLN} and for hook-type $\W$-algebras \cite{FN, Fehily2306.14673, Fehily23}; a general statement for type $A$ has been announced in \cite{Butson23}.

One key reason for this sudden surge of interest is the study of non-semisimple modular tensor categories appearing in the representation theory of $\W$-algebras. 
Indeed, it has been exemplified in the last decade that the inverse Hamiltonian reduction provides large representation categories of modules over $\W$-algebras from modular tensor categories coming from the so-called exceptional $\W$-algebras and non-semisimple modular tensor categories from free field algebras. We refer, among many others, to \cite{Ad19, ACG23, AKR, FRR, FR22} for more on this topic. 

\vspace{1ex}

\subsection*{Main results}
The present paper aims to study the partial and inverse Hamiltonian reductions connecting all $\W$-algebras associated with $\sll_4$ with respect to the poset of the nilpotent orbits (see Figures~\ref{fig:partial HR_sl4} and \ref{fig:partial inverse HR_sl4} below). 
As we are working in type $A$, the nilpotent orbits are parametrized by partitions and we label them using Young tableaux for convenience. For instance, the $\W$-algebra of $\sll_4$ associated with the subregular nilpotent orbit, corresponding to the partition $(3,1)$ is written as
\begin{equation}
    \pW{1,3}:=\fHdeg{1,3}{0}(\V^\kk(\sll_4))
\end{equation}
We first prove that applying an appropriate reduction to a $\W$-algebra results in another $\W$-algebra corresponding to a larger nilpotent orbit. More precisely, we have the following:
\begin{ThmLetter}\label{thm:main_isom}
For all levels $\kk\in \C$, there exist isomorphisms of vertex algebras
\begin{align*}
    &\fHdeg{1,2}{n}(\pW{1,1,2})\simeq \delta_{n,0}\pW{1,3},\qquad \fHdeg{2}{n}(\pW{1,3})\simeq \delta_{n,0}\pW{4},\\
    &\fHdeg{2}{n}(\pW{1,1,2})\simeq \delta_{n,0}\pW{2,2},\qquad \fHdeg{2}{n}(\pW{2,2})\simeq \delta_{n,0}\pW{1,3}.\\
\end{align*}
\end{ThmLetter}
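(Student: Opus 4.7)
The plan is to establish each of the four isomorphisms by a reduction-by-stages argument: viewing the full Kac--Roan--Wakimoto BRST complex that computes $\OHdeg{\OO_2}{\bullet}(\V^\kk(\sll_4))$ as a bicomplex, with one differential implementing the direct reduction $\V^\kk(\sll_4) \rightsquigarrow \W^\kk(\OO_1)$ and the other implementing the partial reduction $\W^\kk(\OO_1) \rightsquigarrow \W^\kk(\OO_2)$ claimed in the theorem.

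The first step is to exhibit the bicomplex structure. For each of the four pairs $(\OO_1,\OO_2)$, I would choose nilpotent representatives $f_1 \in \OO_1$, $f_2 \in \OO_2$ and compatible $sl_2$-triples so that $f_2 = f_1 + f'$, where $f'$ lies in a Levi subalgebra of the centralizer of an $sl_2$-triple through $f_1$ and realizes the orbit inside that subalgebra corresponding to the Young-diagram index of the partial reduction. The combinatorics is dictated by the partition surgery in each case: for instance, in the reduction $(2,1,1)\to(3,1)$, one transfers a $1$-block into the $2$-block to form the $3$-block, leaving the remaining $1$-block fixed; the reduction $(2,2)\to(3,1)$ redistributes a box between the two rows of length $2$. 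This data induces compatible good $\mathbb{Z}$-gradings on $\sll_4$ and a splitting of the Kac--Roan--Wakimoto ghost system of the full BRST complex into two subsystems whose differentials graded-commute.

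The second step is the standard spectral sequence argument on this bicomplex. Running the spectral sequence obtained by taking the inner differential first, Arakawa's vanishing theorem applied to the inner reduction collapses the $E_1$-page onto inner cohomological degree $0$, where it equals $\W^\kk(\OO_1)$, and the induced outer differential must then be identified with the claimed partial-reduction differential. Since Arakawa's theorem also applies to the full reduction $\V^\kk(\sll_4) \rightsquigarrow \W^\kk(\OO_2)$, the total cohomology of the bicomplex is $\W^\kk(\OO_2)$ concentrated in total degree $0$. The spectral sequence thus necessarily collapses and yields $H^\bullet_{\text{partial}}(\W^\kk(\OO_1)) \simeq \delta_{\bullet,0}\,\W^\kk(\OO_2)$, which is the content of the theorem.

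The main obstacle is the first step: explicitly constructing the bicomplex and checking that the two BRST differentials graded-commute and that the inner cohomology is the expected $\W^\kk(\OO_1)$. This requires that the two $sl_2$-data be arranged so that one sits inside the centralizer of the other with perfectly matched good gradings, and it requires tracking ghost variables between the two stages. I expect this to be tractable but case-specific for the four $\sll_4$ pairs, proceeding by explicit manipulation of the Kac--Roan--Wakimoto complex together with its natural Li filtration; the identification of the residual outer differential with the partial reduction can then be confirmed at the level of strong generators of $\W^\kk(\OO_1)$.
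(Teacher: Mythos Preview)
Your approach differs substantially from the paper's, and the step you yourself flag as ``the main obstacle'' is where the argument is genuinely incomplete.

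The paper does \emph{not} work with a bicomplex on the full affine-level KRW complex. Instead it proceeds in two stages. First, for generic $\kk$, it applies the partial reduction to the Wakimoto resolution of $\W^\kk(\OO_1)$ and matches the induced screening operators against those realizing $\W^\kk(\OO_2)$. Second, for vanishing and for all levels, it works directly with the complex $\W^\kk(\OO_1)\otimes\bigwedge^\bullet$ at the $\W$-algebra level, using the explicit strong generators and OPEs computed in \cite{CFLN}. For the two reductions applied through an honest affine vertex subalgebra $\V^\ell(\sll_2)$, Arakawa's vanishing for the Kazhdan--Lusztig category \cite{Ara05} handles the vanishing directly. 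For the two reductions applied through fields of conformal weight greater than $1$ --- namely $\pW{1,1,2}\to\pW{1,3}$ via the weight-$3/2$ generators $v_i^+$, and $\pW{1,3}\to\pW{4}$ via the weight-$2$ generator $v^+$ --- no such shortcut is available, and the paper decomposes the small BRST complex by hand, in the latter case passing to the Li-filtration associated graded to kill a problematic crossing term in the OPE of $\Omega_3$ with itself.

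Your bicomplex strategy is the affine analogue of the finite reduction-by-stages of Genra--Juillard \cite{GJ23}, and if the compatible good gradings and graded-commuting differentials you posit actually exist for all four pairs, the spectral-sequence deduction is sound and would be more conceptual than the paper's case analysis. But there is a concrete warning: the Genra--Juillard framework already does not cover the pair $\ydiagram{1,1,2}\to\ydiagram{2,2}$ at the finite level --- the paper explicitly remarks that the isomorphism $\fHdeg{2}{0}(\finW{1,1,2})\simeq\finW{2,2}$ is new and is recovered here only \emph{a posteriori} from the affine result via Zhu's functor, not by a stages argument on the complex. So for at least this pair you cannot import the bicomplex from the literature; you would have to construct the compatible gradings and verify graded-commutation from scratch, and you have not indicated how. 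A second, smaller gap: even granting the bicomplex, identifying the induced outer differential on $E_1=\W^\kk(\OO_1)\otimes\bigwedge^\bullet_{\mathfrak{m}}$ with the paper's specific partial differential --- which is written in terms of the $\W$-algebra strong generators $e$, $v_i^+$, $v^+$ rather than affine root vectors --- still requires the explicit generator-level computation you were hoping to bypass.
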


The isomorphisms in Theorem~\ref{thm:main_isom} can be summarized in Figure~\ref{fig:partial HR_sl4}, where plain arrows describe reductions between two adjacent nilpotent orbits. In our case, they are all of \emph{Virasoro type}, denoted $\fHdeg{2}{\bullet}$, in the sense that the differential defining the partial reduction has the same form as the one defining the reduction of $\V^\kk(\sll_2)$ into the Virasoro vertex algebra.
In addition, we give an example of a partial reduction between two non-adjacent nilpotent orbits, $\ydiagram{1,1,2}$ and $\ydiagram{1,3}$, in dashed in the diagram. For a similar reason as before, this reduction is said to be of \emph{Bershadsky--Polyakov type}, denoted $\fHdeg{1,2}{\bullet}$. 

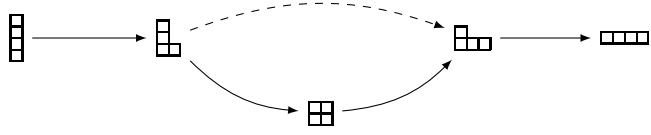
\begin{figure}[h]
    \centering
    \begin{tikzpicture}
			\node at (0, 2) (1111)  {\ydiagram{1,1,1,1}};
			\node at (2, 2) (112) {\ydiagram{1,1,2}};
			\node at (4, 1) (22) {\ydiagram{2,2}};
			\node at (6,2) (13) {\ydiagram{1,3}};
			\node at (8,2) (4) {\ydiagram{4}};
			\draw[->] (1111) to  (112);
			\draw[dashed, ->] (112) to [bend left=20] (13);
                          \draw[->] (13) to (4);
                          \draw[->] (112) to [bend right=20] (22);
                          \draw[->] (22) to [bend right=20] (13);
		\end{tikzpicture}
      \caption{Partial Hamiltonian reductions for $\sll_4$ $\W$-algebras.}
    \label{fig:partial HR_sl4}
\end{figure}

Theorem \ref{thm:main_isom} is proven using the explicit realisations of the $\W$-algebras associated with $\sll_4$ in terms of strong generators and OPEs and the direct description of the cohomology complexes.
Such computations would be difficult to generalize in higher ranks. However, for generic levels, we exhibit another proof of the isomorphisms for the 0-th cohomology based on the Wakimoto type free field realizations of $\W$-algebras, which boils down to comparing screening operators. 
We intend to explore this approach more generally in the near future.

As a consequence of Theorem \ref{thm:main_isom}, we obtain a parallel statement on the reduction by stages for finite $\W$-algebras $\mathscr{U}(\OO)$, which are obtained from $\W^\kk(\OO)$ by applying Zhu's functor \cite{Ara07, DK06}. 
\begin{CorLetter}\label{finite W-algebras}
There exist isomorphisms of associative algebras
\begin{align}
    \fHdeg{2}{0}(\finW{1,1,2})\simeq \finW{2,2},\quad \fHdeg{2}{0}(\finW{2,2})\simeq \finW{1,3}.
\end{align}
\end{CorLetter}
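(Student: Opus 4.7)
The plan is to deduce Corollary~\ref{finite W-algebras} from Theorem~\ref{thm:main_isom} by applying Zhu's functor $\on{Zhu}$. Recall that by Arakawa~\cite{Ara07} and De Sole--Kac~\cite{DK06}, $\on{Zhu}$ sends the affine $\W$-algebra $\W^{\kk}(\OO)$ to the corresponding finite $\W$-algebra $\mathscr{U}(\OO)$, and is compatible termwise with the BRST complexes that define Hamiltonian reductions: Zhu applied to the affine BRST complex $C^\bullet_{f}(V)$ yields the finite BRST complex $C^\bullet_{f}(\on{Zhu}(V))$. This compatibility will form the backbone of the proof.

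The first step is to promote the termwise compatibility to a compatibility at the level of cohomology. In general $\on{Zhu}$ need not commute with taking cohomology, but an Euler--Poincar\'e style argument shows that it does commute with $H^0$ whenever the higher cohomologies on the affine side vanish. Concretely, one filters the BRST complex by conformal weight, so that $\on{Zhu}$ is obtained by passing to the bottom piece; the spectral sequence associated with this filtration collapses under the vanishing of $\fHdeg{2}{n}$ for $n \neq 0$, providing the desired identification $\on{Zhu}\bigl(\fHdeg{2}{0}(-)\bigr) \simeq \fHdeg{2}{0}\bigl(\on{Zhu}(-)\bigr)$ on the affine $\W$-algebras at hand.

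Combining these ingredients with the Virasoro-type isomorphisms of Theorem~\ref{thm:main_isom} for the pairs $(\ydiagram{1,1,2},\ydiagram{2,2})$ and $(\ydiagram{2,2},\ydiagram{1,3})$ yields
\begin{align*}
    \fHdeg{2}{0}(\finW{1,1,2}) &\simeq \on{Zhu}\bigl(\fHdeg{2}{0}(\pW{1,1,2})\bigr) \simeq \on{Zhu}(\pW{2,2}) \simeq \finW{2,2},\\
    \fHdeg{2}{0}(\finW{2,2}) &\simeq \on{Zhu}\bigl(\fHdeg{2}{0}(\pW{2,2})\bigr) \simeq \on{Zhu}(\pW{1,3}) \simeq \finW{1,3}.
\end{align*}

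The main technical obstacle I anticipate is the second step, namely rigorously justifying the collapse of the spectral sequence that identifies $\on{Zhu}\circ H^0$ with $H^0\circ\on{Zhu}$. This requires verifying that the BRST differential respects the filtration inducing $\on{Zhu}$ and that the ghost grading interacts correctly with the conformal weight grading. Once these compatibilities are checked, the vanishing of higher cohomology supplied by Theorem~\ref{thm:main_isom} renders the transfer to the associative algebra side essentially formal.
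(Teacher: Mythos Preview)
Your approach is essentially the paper's: deduce the finite statement from Theorem~\ref{thm:main_isom} by applying Zhu's functor and using its compatibility with BRST cohomology. The paper's justification is a single sentence preceding the corollary in \S\ref{Sec: Partial reductions}: it simply invokes \cite[Theorem~8.1]{Ara07}, which already establishes that $A_\bullet$ commutes with taking BRST cohomology. So the spectral sequence / Euler--Poincar\'e argument you flag as the main technical obstacle is unnecessary---it is absorbed into that citation.

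Two points worth noting. First, your description of Zhu's algebra as ``passing to the bottom piece'' of a conformal weight filtration is not accurate; $A_V=V/O(V)$ is a quotient by a specific subspace and does not arise this way, so if you do want to reprove the commutation statement rather than cite it, the filtration setup needs to be revisited. Second, the paper is careful to say ``when applying a partial reduction \emph{through the affine vertex subalgebras}'': the two cases in Corollary~\ref{finite W-algebras} are exactly those where $\fH{2}$ is defined via an honest affine subalgebra $\V^\ell(\sll_2)\subset \W^\kk(\OO)$, so that the BRST complex structure descends to $A_\bullet$. The remaining partial reductions in Theorem~\ref{thm:main_isom} go through \emph{modules} rather than subalgebras, and the paper explicitly notes that those finite analogues cannot be recovered by this argument. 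Your write-up does not mention this distinction, but it is the reason the corollary covers only these two isomorphisms.
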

The second isomorphism and the remaining cases 
\begin{align}
    \fHdeg{1,2}{0}(\finW{1,1,2})\simeq \finW{1,3},\quad \fHdeg{2}{0}(\finW{1,3})\simeq \finW{4}
\end{align}
are already obtained in \cite[\S 4.1]{GJ23}. The first isomorphism seems to be new as far as we know, although it appears as a statement on Slodowy slices in the classical limit \cite{Morgan14}. 
We note that the first isomorphism completes the missing piece of the reduction by stages between finite $\W$-algebras for $\sll_4$.

As a natural counterpart to partial reduction, one may look at 
inverse Hamiltonian reductions, which are embeddings relating all the $\W$-algebras considered in this paper (see Figure~\ref{fig:partial inverse HR_sl4}).
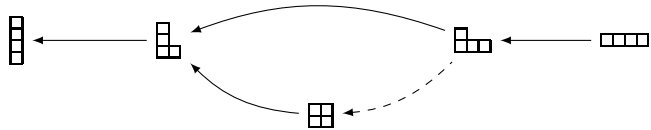
\begin{figure}[h]
    \centering
    \begin{tikzpicture}
			\node at (0, 2) (1111)  {\ydiagram{1,1,1,1}};
			\node at (2, 2) (112) {\ydiagram{1,1,2}};
			\node at (4, 1) (22) {\ydiagram{2,2}};
			\node at (6,2) (13) {\ydiagram{1,3}};
			\node at (8,2) (4) {\ydiagram{4}};
			\draw[<-] (1111) to  (112);
			\draw[<-] (112) to [bend left=20] (13);
                          \draw[<-] (13) to (4);
                          \draw[<-] (112) to [bend right=20] (22);
                          \draw[<-,dashed] (22) to [bend right=20] (13);
		\end{tikzpicture}
      \caption{Inverse Hamiltonian reductions.}
    \label{fig:partial inverse HR_sl4}
\end{figure}
In the case of $\sll_4$, almost all embeddings have been constructed by some of the authors \cite{CFLN, Fehily2306.14673, Fehily23}. 
We prove the existence of the remaining inverse reduction (denoted by a dashed arrow) in Theorem~\ref{iHR for non hook-type}. 
Using inverse Hamiltonian reduction between $\W$-algebras with adjacent nilpotent orbits, we obtain the following complete picture: 
\begin{ThmLetter}\label{mainthm:ihr}
Let $\lambda, \mu$ be two partitions of $4$ such that $\mathbb{O}_\lambda \geq \mathbb{O}_\mu$. Then there exists an embedding
\begin{equation} 
    \W^\kk (\OO_\mu) \hookrightarrow \W^\kk (\OO_\lambda) \otimes \beta \gamma^{N-M} \otimes \hlat^M
\end{equation}
where $M$ and $N$ are integers determined by $\lambda$ and $\mu$.
More precisely, 
\begin{equation}
    N=\frac{1}{2}(\dim \OO_\lambda-\dim \OO_\mu)
\end{equation}
and $M$ is given by the number of inverse Hamiltonian reductions used.
\end{ThmLetter}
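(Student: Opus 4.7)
My plan is to deduce the embedding by composing the elementary inverse Hamiltonian reductions corresponding to the covering relations of the Hasse diagram in Figure~\ref{fig:partial inverse HR_sl4}, together with the ``diagonal'' reduction on the edge $\ydiagram{1,1,2}\!\to\!\ydiagram{1,3}$. Given $\mathbb{O}_\mu \leq \mathbb{O}_\lambda$, I would fix a chain $\mathbb{O}_\mu = \mathbb{O}_{\nu_0} < \mathbb{O}_{\nu_1} < \cdots < \mathbb{O}_{\nu_M} = \mathbb{O}_\lambda$ of length $M$ in the poset. Each step is then an inverse Hamiltonian reduction already available in the literature: the edges involving only hook-type orbits are treated in \cite{CFLN, Fehily2306.14673, Fehily23}, while the step $\ydiagram{2,2} \to \ydiagram{1,3}$ is supplied by Theorem~\ref{iHR for non hook-type}.

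Each such elementary embedding takes the form
\begin{equation*}
    \W^\kk(\OO_{\nu_i}) \hookrightarrow \W^\kk(\OO_{\nu_{i+1}}) \otimes \bg^{n_i} \otimes \hlat,
\end{equation*}
with a single rank-one lattice factor $\hlat$ and $n_i \geq 0$ copies of the $\bg$ system, $n_i$ being determined by the covering edge. Iterating these embeddings along the chosen chain gives
\begin{equation*}
    \W^\kk(\OO_\mu) \hookrightarrow \W^\kk(\OO_\lambda) \otimes \bg^{\sum_i n_i} \otimes \hlat^M,
\end{equation*}
since tensor factors of $\bg$ and of lattice vertex algebras accumulate additively across steps.

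To match the exponent in the theorem, I would check edge by edge from the cited constructions and from Theorem~\ref{iHR for non hook-type} that
\begin{equation*}
    n_i = \tfrac{1}{2}\brac{\dim \OO_{\nu_{i+1}} - \dim \OO_{\nu_i}} - 1,
\end{equation*}
the ``$-1$'' reflecting that one Heisenberg boson of the bosonic ghost package is absorbed into the rank-one lattice factor. Summing over $i$ telescopes to
\begin{equation*}
    \sum_{i=0}^{M-1} n_i = \tfrac{1}{2}\brac{\dim \OO_\lambda - \dim \OO_\mu} - M = N - M,
\end{equation*}
which is exactly the claimed count of $\bg$ factors.

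The main obstacle is the case-by-case bookkeeping of tensor factors in the elementary embeddings: one must verify that each cited inverse Hamiltonian reduction really produces a single rank-one lattice factor (and not, say, a higher-rank lattice or several lattice pieces), together with the stated number of $\bg$ systems. Once this is checked against the explicit generators in each of \cite{CFLN, Fehily2306.14673, Fehily23} and Theorem~\ref{iHR for non hook-type}, the composition is a formal manipulation at the level of vertex algebra homomorphisms and the telescoping above closes the argument.
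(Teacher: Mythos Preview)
Your proposal is correct and follows exactly the paper's approach: the statement appears as Corollary~\ref{thm:allinvred}, derived simply by ``combining Theorem~\ref{iHR for hook-type} and~\ref{iHR for non hook-type}'', which is precisely the composition along a chain that you spell out, and your telescoping count of the $\bg$ and $\hlat$ factors is the correct bookkeeping (note that the edge $\ydiagram{1,1,2}\to\ydiagram{2,2}$ is also covered by Theorem~\ref{iHR for non hook-type}, not only the hook-type references).
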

The proof follows the idea of \cite{Fehily2306.14673} using automorphisms of free field algebras but with convenient sets of screening operators. For the non-hook type partition $\ydiagram{2,2}$ in particular, the partial reduction provides an unconventional set of screening operators which are very helpful for the construction of the inverse Hamiltonian reductions. 

In another direction, we show that the isomorphisms of vertex algebras obtained in Theorem~\ref{thm:main_isom} generalize to representations obtained from the Weyl modules $\weyl^\kk_\lambda$ $(\lambda\in P_+)$:
\begin{ThmLetter}\label{mainthm:modules}
For generic levels $\kk$, the $\W^\kk(\OO_f)$-modules $\HH_{\OO_f}^0(\weyl^\kk_\lambda)$ $(\lambda\in P_+)$ are simple and satisfies the following isomorphisms
\begin{align*}
\begin{array}{lll}
\fHdeg{2,2}{0}(\weyl_\lambda^k)\simeq \fHdeg{2}{0}(\fHdeg{1,1,2}{0}(\weyl_\lambda^k)),&& \fHdeg{1,3}{0}(\weyl_\lambda^k)\simeq \fHdeg{2}{0}(\fHdeg{2,2}{0}(\weyl_\lambda^k)),\\
&&\\
\fHdeg{4}{0}(\weyl_\lambda^k)\simeq \fHdeg{2}{0}(\fHdeg{1,3}{0}(\weyl_\lambda^k)),&& \fHdeg{1,3}{0}(\weyl_\lambda^k)\simeq \fHdeg{1,2}{0}(\fHdeg{1,1,2}{0}(\weyl_\lambda^k)).
\end{array}
\end{align*}
\end{ThmLetter}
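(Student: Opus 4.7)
The plan is to upgrade the vertex algebra isomorphisms of Theorem~\ref{thm:main_isom} to the level of Weyl modules, by realising each composite reduction as the total cohomology of a double BRST complex and running a spectral sequence argument. First, for the simplicity part: at generic $\kk$, Arakawa's results on the Kazhdan--Lusztig category $\KL_\kk$ guarantee that the full BRST reduction $\HH^0_{\OO_f}$ is exact, vanishes in nonzero cohomological degrees on Weyl modules, and sends $\weyl^\kk_\lambda$ to a simple $\W^\kk(\OO_f)$-module. In particular each module $\fHdeg{2,2}{0}(\weyl^\kk_\lambda)$, $\fHdeg{1,3}{0}(\weyl^\kk_\lambda)$, $\fHdeg{4}{0}(\weyl^\kk_\lambda)$ on the left-hand side of the stated identities is simple, settling the first assertion of the theorem.

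For each of the four composite identities I would then construct a double complex whose two differentials realise the two partial reductions involved and whose total differential computes the single reduction onto the larger orbit. Applied to $\V^\kk(\sll_4)$ this double complex underlies Theorem~\ref{thm:main_isom}; applied to $\weyl^\kk_\lambda$ it supports two spectral sequences. The first degenerates, by the simplicity step above, to the single reduction on the larger orbit concentrated in cohomological degree $0$. The second filters by the first partial reduction, and to conclude one needs to verify that this intermediate partial reduction of $\weyl^\kk_\lambda$ is itself concentrated in degree $0$. For partial reductions starting from $\V^\kk(\sll_4)$ (in particular $\fHdeg{1,1,2}{\bullet}$) this is again Arakawa's vanishing. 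For the partial reductions starting from a $\W$-algebra, the key input is the Wakimoto-type free field realisations developed in the paper: after the first reduction, $\weyl^\kk_\lambda$ embeds in an explicit Fock-type module on which the subsequent partial BRST complex becomes Koszul with respect to screening charges whose zero modes act non-resonantly for generic $\kk$.

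The main obstacle is precisely this intermediate vanishing, since partial reductions between two $\W$-algebras lie outside the standard exactness framework of the Kazhdan--Lusztig category. For $\sll_4$ the situation nevertheless remains manageable because every arrow in Figure~\ref{fig:partial HR_sl4} is of Virasoro or Bershadsky--Polyakov type, keeping the relevant partial BRST complexes small and explicit. A robust alternative is an Euler--Poincar\'e character comparison: at generic $\kk$, the simple $\W^\kk(\OO_f)$-modules with integral highest weight admit Kac--Wakimoto-type character formulas, and matching the graded Euler character of the composite complex against that of the simple right-hand side forces the intermediate higher cohomologies to vanish and yields the stated module isomorphism.
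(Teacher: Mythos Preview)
Your approach is genuinely different from the paper's, and it has a real gap.

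The paper does \emph{not} build a double BRST complex whose total differential realises the one-step reduction. Instead it works entirely with Wakimoto resolutions. For each $\lambda\in P_+$ one has a resolution of $\weyl^\kk_\lambda$ by Wakimoto modules $\affWak{w\circ\lambda}$, with differentials given by products of screening operators $S_i^{[h_i(\lambda)+1]}$. Applying $\HH^0_{\OO_f}$ termwise, the paper shows (Proposition~\ref{prop: screening for modules}) that these induced maps are again products of the \emph{reduced} screening fields $S_i^f(z)$, so that $\HH^0_{\OO_f}(\weyl^\kk_\lambda)$ is realised as a joint kernel of explicit screening operators on $\bg^{\bigstar}\otimes\Fock{\lambda}$ (Corollary~\ref{Free field realization of W-modules}). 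From there the module isomorphisms follow by the \emph{same} identification of screening fields already carried out in the proofs of Theorems~\ref{thm:min_to_rect} and~\ref{thm: pqhrRectoSub} for the vacuum case: one simply replaces each $S_i^f$ by its $[h_i(\lambda)+1]$-fold product and runs the same $\bg$-automorphisms. No double complex, no spectral sequence on the module side, and no intermediate vanishing results beyond the acyclicity of Wakimoto modules are needed.

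The gap in your plan is the double complex itself. The second partial reduction (e.g.\ $\fH{2}$ acting on $\pW{1,1,2}$) is defined via fields such as $e$ or $v^+$ that only exist as cohomology classes of the first reduction; they are not elements of the original BRST complex for $\V^\kk(\sll_4)$. So there is no a priori pair of commuting differentials on a single complex whose total equals $\fd{2,2}$ (say), and asserting that ``this double complex underlies Theorem~\ref{thm:main_isom}'' assumes precisely what has to be proved. Establishing such a decomposition is the affine analogue of reduction by stages, and the paper does not proceed that way; in fact \S\ref{sec:cohomology_vanishing} explicitly notes that the conceptual Kazhdan--Lusztig argument does not cover the cases where the partial reduction is through a module rather than an affine subalgebra. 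Your fallback Euler--Poincar\'e comparison cannot close this gap either: matching graded Euler characters is compatible with nonzero higher cohomologies that cancel in pairs, so it does not force the intermediate vanishing you need. If you want to salvage the strategy, the honest route is the one the paper takes: push the Wakimoto resolution of $\weyl^\kk_\lambda$ through both reductions and compare screening operators directly.
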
 
The proof is again based on the free field realizations of these modules available at generic levels.

A direct consequence of Theorem~\ref{thm:main_isom} is that the one-step Bershadsky--Polyakov type reduction here coincides with two Virasoro type reductions applied to the minimal $\W$-algebra $\pW{1,1,2}$ in $\sll_4$.
More interestingly, this result between one-step and two-step reductions can be upgraded at the level of Weyl modules as emphasized in Theorem~\ref{mainthm:modules}:
setting $M=\fHdeg{1,1,2}{0}(\weyl_\lambda^k)$,
\begin{align*}
    \fHdeg{1,2}{0}(M)\simeq \fHdeg{2}{0}(\fHdeg{2}{0}(M))
\end{align*}
holds as an isomorphism of $\pW{1,3}$-modules.
This implies the compatibility of the functors for the two multi-steps reductions.

\subsection*{Organization of the paper} 
The first two sections provide the necessary background: Section \ref{sec: preliminary} starts with a quick overview of the $\W$-algebras associated with $\sll_4$ whereas Section \ref{sec:ffr} presents their free field realizations and introduces the screening operators which consist in one main ingredient of the proof of Theorem~\ref{thm:main_isom}.
The next two sections focus on the partial reductions.
More precisely, Section \ref{Sec: Partial reductions} shows the isomorphism for the $0$-th cohomology at generic levels. 
The generalization to all levels and the cohomology vanishing for other degrees is established in Section \ref{sec:cohomology_vanishing}.
The dual to partial reductions, i.e. the inverse Hamiltonian reductions, is studied in Section \ref{sec:ihr}.
Finally, Section \ref{sec: Partial reductions for modules} is devoted to the proof of Theorem~\ref{mainthm:modules}.

\vspace{1em}
\paragraph{\textbf{Acknowledgements}} 
We thank Thibault Juillard and Dylan Butson for interesting communication.
S.N. would like to thank Gurbir Dhillon for useful discussions.
J.F. is supported by a University of Melbourne Establishment Grant. 
Z.F. is supported by the Australian Research Council Discovery Project DP210101502.
E.F. is supported by an Australian Government Research Training Program (RTP) Scholarship.

\section{Preliminary} \label{sec: preliminary}
Let $\g$ be a finite-dimensional Lie algebra over $\C$ and denote by $\widehat{\g}$ the corresponding affine Kac--Moody algebra.
The affine vertex algebra $\V^\kk(\g)$ associated with $\g$ at level $\kk\in\C$ is the parabolic Verma $\widehat{\g}$-module
\begin{equation}
    \V^\kk(\g)=U(\widehat{\g})\otimes_{U(\g[t]\oplus\C K)}\C_\kk
\end{equation}
where $\C_\kk$ is the one-dimensional representation of $\g[t]\oplus\C K$ on which $\g[t]$ acts trivially and $K$ acts as the multiplication by the scalar $\kk$.
There is a unique vertex algebra structure on $\V^\kk(\g)$, which is generated by the fields
\begin{equation}
    x(z)=\sum_{n\in\Z}x_n z^{-n-1},\qquad x_n=x\otimes t^n,\, x\in\g
\end{equation}
which satisfy the OPEs
\begin{equation}
    x(z)y(w)\sim \frac{[x,y](w)}{(z-w)}+\frac{(x,y)\kk \wun}{(z-w)^2},\quad x,y\in\g
\end{equation}
where $(\cdot, \cdot)$ is a normalized invariant inner product of $\g$ proportional to the Killing form.

The affine vertex algebra $\V^\ell(\h)$ corresponding to a commutative Lie algebra $\h\simeq(\gl_1)^{\oplus n}$ (such as the Cartan subalgebra of $\g$) is referred to as the rank $n$ Heisenberg vertex algebra and will often be denoted by $\pi_\h^\ell$.

$\W$-algebras are vertex algebras obtained as the quantum Hamiltonian reductions of affine vertex algebras \cite{FF90,KRW03}. 
More precisely, we take a (non-zero) nilpotent element $f$ and a \emph{good} $\Z$-grading\footnote{In general, the grading is a $\frac{1}{2}\Z$-grading, but in type $A$, we can always find a $\Z$-grading. See \cite{BG05,EK05} for the classification.}
$\Gamma\colon \g=\bigoplus_j\g_j$ with respect to $f$. 
Let $\bigwedge{}^{\bullet}_{\varphi,\varphi^*}$ denote the $bc$-system strongly and freely generated by the odd fields $\varphi(z),\varphi^*(z)$ satisfying the OPEs
\begin{align}
 \varphi(z)\varphi^*(w)\sim \frac{\wun}{(z-w)},\quad  \varphi(z)\varphi(w)\sim 0\sim \varphi^*(z)\varphi^*(w).
\end{align}
By taking the nilpotent subalgebra $\g_+:=\bigoplus_{j>0}\g_j$, we introduce a BRST complex 
\begin{align}\label{BRST cohomology}
    C_f^\bullet(\V^\kk(\g))=\V^\kk(\g)\otimes \bigwedge{}^{\bullet}_{\g_+},\quad
    \bigwedge{}^{\bullet}_{\g_+}:=\bigotimes_{\begin{subarray}{c}\alpha \in \Delta_{+}\\ \Gamma(\alpha)>0\end{subarray}} \bigwedge{}^{\bullet}_{\varphi_\alpha,\varphi^*_\alpha}
\end{align}
equipped with the differential 
\begin{align}
    d=\int Y(Q,z)\ \dd z,\quad Q=Q_{\mathrm{st}}+Q_\chi
\end{align}
with 
\begin{align}
    Q_{\mathrm{st}}=\sum e_\alpha \varphi_\alpha^*-\frac{1}{2}\sum c_{\alpha,\beta}^\gamma \varphi_\alpha^*\varphi_\beta^* \varphi_\gamma,\quad Q_\chi=\sum (f,e_\alpha)\varphi_\alpha^*
\end{align}
where the sums are taken over positive roots $\alpha$'s for $\g_+$ (i.e.\ $ \Gamma(\alpha)>0$)
and $c_{\alpha,\beta}^\gamma$ are the structure constants defined by the relations $[e_\alpha,e_\beta]=\sum c_{\alpha,\beta}^\gamma e_\gamma$.

The $\W$-algebra $\W^\kk(\g,f)$ associated with $(\g,f)$ at level $\kk$ is defined as 
\begin{align}
    \W^\kk(\g,f):=\HH^0(C_f^\bullet(\V^\kk(\g)),d).
\end{align}
Note that $\W^\kk(\g,f)$ is independent of the choices of representatives $f$ in a given nilpotent orbit and good gradings $\Gamma$ \cite{AKM15} so that we will occasionally use the notation $\W^\kk(\OO_f)$ in the following.
Moreover, we have the cohomology vanishing $\HH^{\neq0}(C_f^\bullet(\V^\kk(\g)),d)=0$ \cite{KW04}.

Given a $\V^\kk(\g)$-module $M$, we have a BRST complex $C_f^\bullet(M)$ defined as in \eqref{BRST cohomology}, whose cohomology $\HH_{\OO_f}(M):=\HH(C_f^\bullet(\V^\kk(\g)),d)$ is a module over $\W^\kk(\OO_f)$.

In this paper, we realize $\sll_4$ as the set of $4\times 4$-matrices with zero-trace. 
A convenient basis is given by the elementary matrices $\EM{i,j}$ ($1\leq i\neq j\leq 4$) together with the diagonal matrices $\EM{i,i}-\EM{i+1,i+1}$ ($1\leq i\leq 3$).
The latter matrices form a basis for a Cartan subalgebra $\h$ of $\sll_4$.
Denote $\Pi=\{\alpha_1,\alpha_2,\alpha_{3}\}$ the set of simple root of $\sll_4$.
Then the set of positive roots $\Delta_+$ is given by
\begin{equation}
    \alpha_1,\quad\alpha_2,\quad\alpha_3,\quad
    \alpha_4=\alpha_1+\alpha_2,\quad
    \alpha_5=\alpha_2+\alpha_3,\quad
    \alpha_6=\alpha_1+\alpha_2+\alpha_3.
\end{equation}
The corresponding root vectors and coroots are given by
\begin{equation}
\begin{gathered}
    e_{i}=\EM{i,i+1},\quad 
    h_i=\EM{i,i}-\EM{i+1,i+1},\quad
    f_{i}=\EM{i+1,i},\quad(i=1,2,3)\\
    e_4=\EM{1,3},\quad e_5=\EM{2,4},\quad e_6=\EM{1,4},\\
    f_4=\EM{3,1},\quad f_5=\EM{4,2},\quad f_6=\EM{4,1}.
\end{gathered}
\end{equation}
We denote by $W$ the Weyl group of $\sll_4$ (which is generated by the simple reflections $s_i$, $i =1,2,3$), by $\ell\colon W\rightarrow \Z_+$ the length function such that the longest element of $W$ has length $N=6$, and by $\circ\colon W\times \h^*\rightarrow \h^*$ the dot action given by $w\circ\lambda=w(\lambda+\rho)-\rho$.

In type $A$, nilpotent orbits are parametrized by partition of integers.
In particular, the Lie algebra $\sll_4$ admits five distinct nilpotent orbits: trivial, minimal, rectangular, subregular and regular corresponding to the partitions $(1^4)$, $(1,1,2)$, $(2,2)$, $(1,3)$ and $(4)$ respectively.
For a partition $\lambda$, we denote by $\mathbb{O}_\lambda$ the corresponding orbit inside $\sll_4$. 

The nilpotent orbits are partially ordered by closure relations for the Zariski topology, that is $\mathbb{O}_\lambda\leq \mathbb{O}_\mu$ iff ${\mathbb{O}}_\lambda\subset \overline{\mathbb{O}}_\mu$. This also coincides with the dominance order defined on the set of partitions. 
In Fig.\ \ref{fig:partial HR_sl4}, the nilpotent orbits are represented by the Young tableaux of the corresponding partition and the relations between them are indicated by arrows;
the left-most side is the one corresponding to the smallest nilpotent orbit, i.e. $\mathbb{O}_{(1^4)}=\{0\}$, and the right-most side is the one corresponding to the principal nilpotent orbit, i.e. $\overline{\mathbb{O}}_{(4)}$ is the nilpotent cone of $\sll_4$.

We end this section by introducing the $\beta\gamma$-system and the half-lattice vertex algebra $\Pi$, both of which appear frequently in what follows.
The $\beta\gamma$-system is the vertex algebra strongly and freely generated by the even fields $\beta(z),\gamma(z)$ satisfying the OPEs
\begin{equation}
    \beta(z)\gamma(w)\sim \frac{\wun}{(z-w)},\quad  \beta(z)\beta(w),  \gamma(z)\gamma(w)\sim 0
\end{equation}
whereas the half-lattice vertex algebra $\Pi$
is a subalgebra of the lattice vertex superalgebra $V_{\Z u\oplus \Z v}$ associated with the lattice $\Z u\oplus \Z v$ equipped with bilinear form defined by $(u,u)=1=-(v,v)$:
\begin{align}
  \Pi:=\bigoplus_{n\in \Z}\pi^{u,v}_{n(u+v)}\subset V_{\Z u\oplus \Z v}.  
\end{align}
It is useful to introduce the vectors
\begin{align}
    c=u+v,\quad d=u-v.
\end{align}
The $\bg$-system embeds into $\Pi$ using the FMS bosonisation \cite{FMS86}:
\begin{align}\label{FMS}
\begin{split}
&\beta\gamma \xrightarrow{\simeq} \mathrm{Ker} \left( S_{\mathrm{FMS}}\colon \Pi\rightarrow V_{\Z u\oplus \Z v} \right), \quad S_{\mathrm{FMS}}:=\int Y(e^u,z)\,\dd z,\\
&\beta\mapsto \hwt{c},\quad \gamma \mapsto -u\hwt{-c}.
\end{split}
\end{align}

\section{Free fields realizations of \texorpdfstring{$\W$}{W}-algebras}\label{sec:ffr}
In the rest of the paper, we assume that $\kk$ is a generic level (e.g.\ $\kk\in \C \backslash \Q$) unless otherwise stated.
Then the $\W$-algebras can be realized as the kernel of certain screening operators acting on free field algebras \cite{Gen20}. In this section, we recall this construction for later use.

Let $N_+\subset \SL_4$ be the upper unipotent subgroup, i.e.
\begin{equation}
   N_+=\left\{M(z); z_1,\ldots, z_6\in \C\right\}.
\end{equation}
with 
\begin{align}\label{Upper unipotents}
    M(z)=M(z_1,\ldots,z_6)=\left(\begin{array}{cccc}
    1&z_1&z_4&z_6\\
    0&1&z_2&z_5\\
    0&0&1&z_3\\
    0&0&0&1\\
    \end{array}\right).
\end{align}
The left action of $N_+$ on itself by matrix multiplication induces an anti-homomorphism $\rho:\n_+\to\D(N_+)$ where $\nil_+$ is the Lie algebra of $N_+$ and $\D(N_+)$ is the ring of polynomial differential operators on $N_+$. 
In particular, we have 
\begin{equation}\label{eq:rho}
    \rho(e_i)=\sum_{j=1}^6P_i^{j}(z)\partial_{z_j}\qquad (i=1,2,3) 
\end{equation}
for some polynomials $P_i^{j}(z)\in\C[z_1,\dots,z_6]$.
The affine vertex algebra $\V^\kk(\sll_4)$ admits an embedding into the free field algebra
\begin{align}\label{Wakimoto realization for affine}
   \Psi\colon \V^\kk(\sll_4)\hookrightarrow \ff{1,\ldots,6}\otimes \heis,
\end{align}
called the Wakimoto realization (see e.g.\ \cite{Fre07}) by using the tensor product of the $\beta\gamma$-systems $\ff{1,\ldots,6}$ and the Heisenberg vertex algebra $\heis$ associated with the Cartan subalgebra $\h$ at level $\kk+4$. 

The image of $\Psi$ is equal to the intersection of the kernels of the screening operators 
\begin{align}\label{affine screenings}
    S_i=\int S_i(z)\ \dd z \colon \affWak{0} \rightarrow \affWak{-\alpha_i},\quad S_i(z)= Y(\widehat{\rho}(e_i)\fockIO{i},z)
\end{align}
between the Wakimoto representations $\affWak{\lambda}:=\ff{1,\ldots,6}\otimes \Fock{\lambda}$, which are $\V^\kk(\sll_4)$-modules through \eqref{Wakimoto realization for affine}. Here $\widehat{\rho}(e_i)$ is the element in $\ff{1,\ldots,6}$ obtained from \eqref{eq:rho} by replacing $z_j$ with $\gamma_j$ and $\partial_{z_j}$ with $\beta_j$ (and taking normally-ordered products where necessary) and $\Fock{\lambda}$ denotes the Fock module of $\heis$ of highest weight $\lambda\in\h^*$.

The embedding \eqref{Wakimoto realization for affine} is completed into an exact sequence
\begin{align}\label{Wakimoto resolution of affine}
    0\rightarrow  \V^\kk(\sll_4) \overset{\Psi}{\longrightarrow} \affWak{0}\overset{\bigoplus S_i}{\longrightarrow} \bigoplus_{i=1,\ldots,3} \affWak{-\alpha_i}\rightarrow \mathrm{C}_2 \rightarrow \cdots \rightarrow \mathrm{C}_N \rightarrow 0
\end{align}
where
\begin{align}
    \mathrm{C}_i=\bigoplus_{\begin{subarray}c w\in W\\ \ell(w)=i\end{subarray}} \affWak{w\circ 0},
\end{align}
see e.g.\ \cite[Proposition 4.2]{ACL19}.

The Wakimoto realizations of $\W$-algebras are obtained by applying $\HH_{\OO_f}^0$ to \eqref{Wakimoto resolution of affine}.
Since the $\V^\kk(\sll_4)$-modules $\affWak{\lambda}$ are $\HH_{\OO_f}$-acyclic (see e.g. \cite[Proposition 4.5]{Gen20}), we obtain the complex 
\begin{align}\label{Wakimoto resolution of W algebras}
    0\rightarrow  \W^\kk(\OO_f) \rightarrow \HH_{\OO_f}^0(\affWak{0})\overset{\bigoplus [S_i]}{\longrightarrow} \bigoplus_{i=1,\ldots,3} \HH_{\OO_f}^0(\affWak{-\alpha_i})\rightarrow \cdots \rightarrow \HH_{\OO_f}^0(\mathrm{C}_N) \rightarrow 0,
\end{align}
which is exact by the cohomology vanishing $\HH_{\OO_f}^{\neq0}(\V^\kk(\sll_4))=0$.
The $\W^\kk(\OO_f)$-modules $\HH_{\OO_f}^0(\affWak{\lambda})$ and the induced screening operators $[S_i]$ are explicitly obtained by using good coordinates of \eqref{Upper unipotents} as follows.

Consider the Lie subgroups $G_+$ and $G_{0,+}$ of $N_+$ which correspond to the subalgebras $\g_+$ and $\g_{0,+}:=\g_0\cap \nil_+$ respectively under the exponential map $\exp:\nil_+\to N_+$.
Let us specialize the matrix $M(z)$ in \eqref{Upper unipotents} as 
\begin{equation}\label{some matrices}
\begin{aligned}
    M_+(z)=M(z)|_{\begin{subarray}{l} z_i=0\\ \mathrm{s.t.} \Gamma(\alpha_i)=0\end{subarray}},\qquad 
    M_0(z)=M(z)|_{\begin{subarray}{l} z_i=0\\ \mathrm{s.t.} \Gamma(\alpha_i)>0\end{subarray}}
\end{aligned}
\end{equation}
and introduce the following subgroups
\begin{align*}
 &G_{0,+}^{(1)}=\{M_0^{(1)}(z):=M_0(z_1,0,\dots,0)\},\\
 &G_{0,+}^{(2)}=\{M_0^{(2)}(z):=M_0(0,z_2,0,z_4,0,0)\},\\
 &G_{0,+}^{(3)}=\{M_0^{(3)}(z):=M_0(0,0,z_3,0,z_5,z_6)\}.
\end{align*}
Note that the variables here might be zero due to the constraints \eqref{some matrices}. Then we have
\begin{equation}
 \left(G_{0,+}^{(1)}\times G_{0,+}^{(2)}\times G_{0,+}^{(3)}\right)\ltimes G_+ \xrightarrow{\simeq} N_+
\end{equation}
as varieties. We take new coordinates $(z_1,\ldots,z_6)$ on $N_+$ through the product
\begin{align}
    M_0^{(1)}(z)M_0^{(2)}(z)M_0^{(3)}(z)M_+(z).
\end{align}
Note that the simplicity of the decomposition is due to the small rank of the Lie algebra $\sll_4$.
Then by \cite[Proposition 4.5, Theorem 4.8]{Gen20}, we have
\begin{align}\label{Wakimoto for Walg}
    \HH_{\OO_f}^0(\affWak{\lambda})\simeq \bg^\bigstar\otimes \Fock{\lambda},\quad  \bigstar=\{i=1,\ldots,6 ; \Gamma(\alpha_i)=0\}
\end{align}
and the induced screening operators $[S_i]$ are expressed as
\begin{equation}\label{screeinings for Walg}
    S_i^f=\int S_i^f(z)\ \dd z,\quad S_i^f(z):=Y(\widehat{\rho}_{f,\Gamma}(e_i)\fockIO{i},z)
\end{equation}
where 
\begin{align}
    \widehat{\rho}_{f,\Gamma}(e_i)=\begin{cases}
         \widehat{\rho}(e_i)& \text{if }\Gamma(\alpha_i)=0, \\
         \displaystyle{\sum_{j=1}^6(f,e_j)P_i^{j}(\gamma)}& \text{if }\Gamma(\alpha_i)=1.
    \end{cases}
\end{align}
Roughly, they are obtained from $S_i$ by evaluating $\beta_j$ as $(f,e_j)$ if $\alpha_j$ is positively graded.
In particular, the $\W$-algebras have the following free field realization 
\begin{align}\label{Wakimoto realizations for W-alg}
    \W^\kk(\OO_f)\simeq \bigcap_{i=1,\ldots,3} \ker  S_i^f|_{\bg^\bigstar\otimes \heis}.
\end{align}

We will frequently use the notation
\begin{align}
\affWak{\lambda,f}:=\bg^\bigstar\otimes \Fock{\lambda}
\end{align}
for the Wakimoto representations over $\W^\kk(\OO_f)$ and replace nilpotent elements with their corresponding Young tableaux. 
For example, 
\begin{align}
  \W^\kk(\sll_4,f_{(2,2)})=\pW{2,2}=\fHdeg{2,2}{0}(\V^\kk(\sll_4)),\quad \affWak{\lambda,f_{(2,2)}}=\pWak{2,2}{\lambda}.
\end{align}

The equivalence classes of pairs $(f,\Gamma)$ are classified by pyramids described by Young tableaux with shifting \cite{EK05}.
For example, the three following pyramids correspond to the nilpotent orbit $\mathbb{O}_{(2,1,1)}$: 
\begin{equation}\label{shifting Young diagrams}
    {
    \begin{tikzpicture}[every node/.style={draw,regular polygon sides=4,minimum size=0.4cm,line width=0.04em},scale=0.4]
        \node at (0,0) (4) {};
        \node at (1,0) (1) {};
        \node at (0,1) (4) {};
        \node at (0,2) (1) {};
    \end{tikzpicture}}\qquad
        {
    \begin{tikzpicture}[every node/.style={draw,regular polygon sides=4,minimum size=0.4cm,line width=0.04em},scale=0.4]
        \node at (0,0) (4) {};
        \node at (1,0) (1) {};
        \node at (0.5,1) (4) {};
        \node at (0.5,2) (1) {};
    \end{tikzpicture}} \qquad
            {
    \begin{tikzpicture}[every node/.style={draw,regular polygon sides=4,minimum size=0.4cm,line width=0.04em},scale=0.4]
        \node at (0,0) (4) {};
        \node at (1,0) (1) {};
        \node at (1,1) (4) {};
        \node at (1,2) (1) {};
    \end{tikzpicture}.}
\end{equation}
The explicit pair $(f,\Gamma)$ associated with a pyramid is obtained by labelling each box by $\{1,\ldots,4\}$ (for $\sll_4$) corresponding to the basis of the natural representation $\C^4=\bigoplus_{i=1}^4\C v_i$. Then, the nilpotent element $f$ is expressed as 
\begin{equation}
    f=\sum_{i,j=1}^4\delta_{i\rightarrow j}E_{i,j}
\end{equation}
where $\delta_{i\rightarrow j}$ is $1$ if the boxes {\tiny $\numtableaux{i}$} and {\tiny $\numtableaux{j}$} are adjacent and $0$ otherwise. 
The labelled pyramid gives a grading on $v_1,\ldots, v_4$ by reading the $x$-coordinates of the centers of the corresponding boxes with the left-most boxes satisfying $x=0$ and a decreasing order to the right.
It induced a grading on $\C^4$ and then $\sll_4\subset \mathrm{End}\ {\C^4}$. For example, the pyramid corresponding to the minimal nilpotent orbit 
\begin{equation}
\numtableaux{3\\2\\1&4}
\end{equation}
gives the grading on $\C^4$, $\wt(v_1) =0,\ \wt(v_2)=0,\ \wt(v_3)=0,\ \wt(v_4)=-1$, and defines the pair $(f,\Gamma)$:
\begin{equation}
f=    \begin{pmatrix}
0 & 0 & 0& 0\\ 0 & 0 & 0& 0\\ 0 & 0 & 0& 0\\ 1 & 0 & 0 & 0
    \end{pmatrix} ,\qquad \Gamma\colon     \begin{pmatrix}
0 & 0 & 0& 1\\ 0 & 0 & 0& 1\\ 0 & 0 & 0& 1\\ -1 & -1 & -1 & 0
    \end{pmatrix}. \\
\end{equation}
Note that different choices of the labelled pyramids might result in different sets of screening operators which all satisfy \eqref{Wakimoto realizations for W-alg}.
\begin{proposition}\label{Several Wakimoto realizations for W-algebras}
The data for the realizations \eqref{Wakimoto realizations for W-alg} of the $\W$-algebras associated with the labelled Young tableaux
\begin{equation*}
    \numtableaux{3\\2\\1&4}\quad   %min1
    \numtableaux{\none&4\\\none&3\\1&2}\quad   %min2
    \numtableaux{2&4\\1&3}\quad    %rect1
    \numtableaux{2&3\\1&4}\quad    %rect2
    \numtableaux{\none&\none&4\\1&2&3}\quad    %sub3
    \numtableaux{\none&3\\1&2&4}\quad  %sub4
    \numtableaux{1&2&3&4}         %reg
\end{equation*}
 is explicitly given in Table \ref{tab: list of Wakimoto realizations of Walg} below. 
\end{proposition}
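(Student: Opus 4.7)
The plan is to apply the general free field realization theorem \cite[Proposition 4.5, Theorem 4.8]{Gen20} to each of the seven labelled pyramids individually, since the proposition is precisely a compilation of the outputs of that theorem for these seven choices of $(f,\Gamma)$. For each pyramid the argument splits into three routine stages.

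First, for each pyramid I extract the pair $(f,\Gamma)$ via the dictionary given in the paragraph preceding the proposition: horizontal adjacencies of the boxes determine the matrix entries of $f$, while the $x$-coordinates of the box centers determine the weights of $v_1,\dots,v_4$ and hence the grading $\Gamma$ on $\sll_4$. From $\Gamma$ I read off the subset $\bigstar = \{i\in\{1,\dots,6\} : \Gamma(\alpha_i)=0\}$, which fixes the $\beta\gamma$-content of the Wakimoto module $\affWak{\lambda,f}\simeq \bg^\bigstar \otimes \Fock{\lambda}$. Next, I compute the polynomials $P_i^{\,j}(z)$ of \eqref{eq:rho} in the good coordinates adapted to the pyramid: these coordinates come from the factorization $M_0^{(1)}(z)M_0^{(2)}(z)M_0^{(3)}(z)M_+(z)$ of elements of $N_+$ with factors specified in \eqref{some matrices}, and expanding this product and inverting the associated Jacobian realizes the left multiplication by $e_i$ ($i=1,2,3$) as a first-order differential operator in the new variables. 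Substituting $z_j\mapsto \gamma_j$ and $\partial_{z_j}\mapsto \beta_j$ with appropriate normal orderings then produces $\widehat{\rho}(e_i)\in \ff{1,\ldots,6}$.

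Finally, I apply the substitution rule of \eqref{screeinings for Walg}: for $i$ with $\Gamma(\alpha_i)=0$ the field $\widehat{\rho}_{f,\Gamma}(e_i)$ equals $\widehat{\rho}(e_i)$, while for $i$ with $\Gamma(\alpha_i)=1$ each $\beta_j$ inside $\widehat{\rho}(e_i)$ is replaced by the scalar $(f,e_j)$. Combining with the Fock factor $\fockIO{i}$ and reading off the data row by row fills in Table~\ref{tab: list of Wakimoto realizations of Walg}. The hard part is purely computational: the matrix product defining the good coordinates, together with the inversion of the coordinate change, must be carried out separately for each of the seven pyramids, since distinct labellings of the same Young shape yield genuinely different realizations with different $\bigstar$, different $P_i^{\,j}$, and different substitution patterns. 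Tractability is ensured by the small rank and the fact that the pyramids have at most four boxes, so each case reduces to a short explicit calculation that can then be tabulated.
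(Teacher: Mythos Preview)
Your proposal is correct and follows exactly the approach implicit in the paper: the proposition is stated without an explicit proof, being the tabulated output of the general recipe from \cite[Proposition 4.5, Theorem 4.8]{Gen20} applied case by case to each labelled pyramid, precisely as you describe.
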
 

{\small
\begin{table}[h!]
    \begin{center}
    \begin{tabular}{|c|>{\centering\arraybackslash} m{2cm}|c|c|c|c|} \hline
        Case & Pyramid & $\bigstar$ & $\widehat{\rho}_{f,\Gamma}(e_1)$ & $\widehat{\rho}_{f,\Gamma}(e_2)$ & $\widehat{\rho}_{f,\Gamma}(e_3)$ \\ \hline
         (I.1) & \vspace{1mm} $\numtableaux{3\\2\\1&4}$ &$\{1,2,4\}$  &$\coffaa$ & $\coffab$& $\coffac$ \\ \hline
         (I.2) & \vspace{1mm} $\numtableaux{\none&4\\\none&3\\1&2}$ &$\{2,3,5\}$ &$\coffba$ & $\coffbb$& $\coffbc$ \\ \hline
         (II.1) & \vspace{1mm} $\numtableaux{2&4\\1&3}$ &$\{1,3\}$ &$\coffca$ & $\coffcb$& $\coffcc$ \\ \hline
         (II.2) & \vspace{1mm} $\numtableaux{2&3\\1&4}$ &$\{1,3\}$ &$\coffda$ & $\coffdb$& $\coffdc$ \\ \hline
         (III.1) & \vspace{1mm} $\numtableaux{\none&\none&4\\1&2&3}$ &$\{3\}$ &$\coffea$ & $\coffeb$& $\coffec$ \\ \hline
         (III.2) & \vspace{1mm} $\numtableaux{\none&3\\1&2&4}$ &$\{2\}$ &$\cofffa$ & $\cofffb$& $\cofffc$ \\ \hline
         (IV) & \vspace{1mm} $\numtableaux{1&2&3&4}$ &$\emptyset$ &$\coffga$ & $\coffgb$& $\coffgc$ \\ \hline
    \end{tabular}
    \end{center}
    \caption{Coefficients $\widehat{\rho}_{f,\Gamma}(e_i)$ in the screening operators $S_i^f$.}
    \label{tab: list of Wakimoto realizations of Walg}
\end{table}}

Later, we will also use the following choice of subregular nilpotent element and good grading: 
\begin{equation}\label{inhomogeneous subregular}
    f_{\mathrm{sub}}^{(1)}=\EM{2,1}+\EM{3,1}+\EM{4,2}+\EM{4,3},\qquad 
    \Gamma\colon \begin{pmatrix}
       0&1&1&2\\
    -1&0&0&1\\
    -1&0&0&1\\
    -2&-1&-1&0
    \end{pmatrix}
\end{equation}
These data can be encoded, following the same rules as before, in the following unconventional tableau
\begin{equation}
    {
    \begin{tikzpicture}[every node/.style={draw,regular polygon sides=4,minimum size=0.4cm,line width=0.04em},scale=0.4]
        \node at (1,0) (4) {\small 4};
        \node at (-1,0) (1) {\small 1};
        \node at (0,.54) (2) {\small 2};
        \node at (0,-.54) (3) {\small 3};
    \end{tikzpicture}.}
\end{equation}
Indeed, $f_{\mathrm{sub}}^{(1)}$ is conjugate in $\SL_4$ to the one $f_{\mathrm{sub}}^{(2)}=E_{4,2}+E_{2,1}$ in Table \ref{tab: list of Wakimoto realizations of Walg} (III.2) as 
\begin{equation}
   f_{\mathrm{sub}}^{(1)}=gf_{\mathrm{sub}}^{(2)}g^{-1},\qquad
    g=\begin{pmatrix}
        1/2&0&0&0\\
        0&1/2&-2&0\\
        0&1/2&2&0\\
        0&0&0&1
    \end{pmatrix}
\end{equation}
so that $g$ preserves the common good grading.

\begin{proposition}\label{inhomogeneous screening for sub}
For the $\W$-algebra $\W^\kk(\OO_f)$ associated with $f=f_{\mathrm{sub}}^{(1)}$ satisfying \eqref{inhomogeneous subregular}
the realization \eqref{Wakimoto realizations for W-alg} is given by 
\begin{align}\label{eq:screening_sub1}
    \widehat{\rho}_{f,\Gamma}(e_1)=\wun+\gamma_2,\quad\widehat{\rho}_{f,\Gamma}(e_2)=\beta_2,\quad \widehat{\rho}_{f,\Gamma}(e_3)=\wun-\gamma_2.
\end{align}
\end{proposition}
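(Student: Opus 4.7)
My approach is a direct application of the general free field realization recalled in Section \ref{sec:ffr}, tailored to the data in \eqref{inhomogeneous subregular}. The grading $\Gamma$ puts only $\alpha_2$ in degree zero, so $\bigstar=\{2\}$, $\g_{0,+}=\C E_{2,3}$, and $\g_+$ is spanned by the remaining positive root vectors $E_{1,2}, E_{1,3}, E_{2,4}, E_{3,4}, E_{1,4}$. The decomposition $(G_{0,+}^{(1)}\times G_{0,+}^{(2)}\times G_{0,+}^{(3)})\ltimes G_+\simeq N_+$ collapses to the single factor $G_{0,+}^{(2)}\ltimes G_+=\exp(\C E_{2,3})\ltimes G_+$, and I would work in the coordinates $(z_1,\dots,z_6)$ determined by the product $M_0^{(2)}(z_2)\, M_+(z_1,z_3,z_4,z_5,z_6)$, with $M_0^{(2)}=I+z_2 E_{2,3}$ and $M_+=\exp(z_1 E_{1,2}+z_4 E_{1,3}+z_5 E_{2,4}+z_3 E_{3,4}+z_6 E_{1,4})$.

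The calculation then proceeds in three short steps. First, expand $M_+$, which terminates at quadratic order thanks to the step-two nilpotency of $\g_+$, and match entries with the standard parametrization \eqref{Upper unipotents} to obtain the explicit polynomial change of coordinates relating $(z_j)$ to $(w_j)$. Second, compute $\rho(e_1), \rho(e_2), \rho(e_3)$ as derivations on $N_+$ in the standard coordinates by a single matrix multiplication each, and translate them to $(z_j)$-coordinates via the chain rule to read off the polynomials $P_i^j$. Third, evaluate the pairings: since $f_{\mathrm{sub}}^{(1)}=E_{2,1}+E_{3,1}+E_{4,2}+E_{4,3}$ is a sum of matrix units, one immediately finds $(f,e_j)=1$ for $j\in\{1,3,4,5\}$ and $0$ otherwise. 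Inserting these into \eqref{screeinings for Walg}, setting $\gamma_j=0$ for $j\neq 2$, and observing that the quadratic contributions $P_i^6$ drop out because they are paired against $(f,e_6)=0$, yields $\widehat{\rho}_{f,\Gamma}(e_1)=\wun+\gamma_2$ and $\widehat{\rho}_{f,\Gamma}(e_3)=\wun-\gamma_2$, together with $\widehat{\rho}_{f,\Gamma}(e_2)=\widehat{\rho}(e_2)=\beta_2$ coming directly from the grade-zero case.

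The only mild subtlety is bookkeeping the sign conventions so that the answer is consistent with case (III.2) of Table \ref{tab: list of Wakimoto realizations of Walg}, which treats the conjugate nilpotent $f_{\mathrm{sub}}^{(2)}$ with the same grading $\Gamma$. A more conceptual alternative would be to use the conjugation $g$ sitting between $f_{\mathrm{sub}}^{(1)}$ and $f_{\mathrm{sub}}^{(2)}$ in order to transport the screenings of (III.2) to the unconventional pyramid; however, $\Ad(g)$ does not preserve $G_{0,+}$, so it induces a nontrivial automorphism of $\bg^{\{2\}}\otimes\heis$, and describing this automorphism explicitly is of comparable difficulty to the direct chain-rule computation outlined above.
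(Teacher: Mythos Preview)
Your proposal is correct and follows exactly the route the paper intends: the proposition is stated without proof because it is a straightforward instantiation of the general recipe \eqref{screeinings for Walg} from \S\ref{sec:ffr}, and your three-step computation is precisely that instantiation.

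Two very minor remarks. First, your parametrisation $M_+=\exp(\,\cdot\,)$ differs from the paper's convention, where $M_+(z)$ is simply the matrix \eqref{Upper unipotents} with $z_2=0$; using the latter makes the change of coordinates even lighter (only $w_5=z_5+z_2z_3$ is nontrivial), but either way only $\gamma_2$ survives so the final expressions agree. Second, the phrase ``setting $\gamma_j=0$ for $j\neq 2$'' is harmless here but slightly misleading: in the good coordinates attached to $G_{0,+}\ltimes G_+$, the coefficients $P_i^j$ that are paired against nonzero $(f,e_j)$ automatically lie in $\C[\gamma_2]$ (this is part of what Genra's construction guarantees), so no truncation is actually being performed.
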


\section{Partial reductions}\label{Sec: Partial reductions}
The $\W$-algebras $\W^k(\OO_f)$ are strongly and freely generated by fields corresponding to a basis of the centralizer $\g^f$.  In type $A$, the strong generating type of $\W^k(\OO_\lambda)$ is described in Proposition 2.1 of \cite{CFLN}, in addition to the OPEs for $n\leq 5$. We will use the same generators here, although the detailed OPEs are unnecessary.

The minimal $\W$-algebra $\pW{1,1,2}$ is strongly and freely generated by 
\begin{align}\label{strong generators for the minimal}
    e,h,f,J,\quad v_1^{\pm}, v_2^{\pm},\quad  L
\end{align}
of conformal weight $1$, $3/2$ and $2$ respectively. The first four generate the affine vertex algebra $\V^{\kk+1}(\gl_2):=\V^{\kk+1}(\sll_2)\otimes \pi^J$ with 
\begin{align}
    J(z)J(w)\sim \frac{(\kk+2)}{(z-w)^2}.
\end{align}
The second four elements span the natural representation of $\gl_2$ and its dual 
\begin{align}
    \C^2=\Span\{v_1^+,v_2^+\},\quad \overline{\C}^2=\Span\{v_1^-,v_2^-\}.
\end{align} and satisfy the OPEs
\begin{align}\label{OPEs for the natural representations}
    v^+_i(z) v^+_j(w)\sim 0,\quad v^-_i(z) v^-_j(w)\sim 0,\quad (i,j=1,2).
\end{align}
The last element $L$ is a conformal vector of $\pW{1,1,2}$. As $\pW{1,1,2}$ contains $\V^{\kk+1}(\sll_2)$, we can take the quantum Hamiltonian reduction $\fH{2}$, which reduces $\V^{\kk+1}(\sll_2)$ to the Virasoro vertex algebra $\pWlevel{2}{\kk+1}$.
More explicitly, we consider the cohomology of the following BRST complex
\begin{align}\label{BRST complex from min to subreg}
    \pW{1,1,2}\otimes \bigwedge{}^{\bullet}_{\varphi,\varphi^*},\quad \fd{2}=\intNOz{(e+\wun) \varphi^*}.
\end{align}

On the other hand, the basis $\{v_1^+, v_2^+\}$ of $\C^2$ equipped with the $\gl_2$-action can be identified with the root vectors $E_{1,2}, E_{1,3}$ inside $\sll_3$. They satisfy the same (vanishing) OPEs thanks to \eqref{OPEs for the natural representations}. 
Therefore, we may apply the quantum Hamiltonian reduction $\fH{1,2}$.
More explicitly, we consider the following BRST complex
\begin{align}\label{second BRST complex 1}
    \pW{1,1,2}\otimes \bigwedge{}^{\bullet}_{\varphi_{1,2},\varphi_{1,2}^*},\quad  \fd{1,2}=\intNOz{(v_1^++\wun)\varphi_1^*+v_2^+\varphi_2^*}.
\end{align}
The next theorem identifies the resulting vertex algebras with $\W$-algebras associated with nilpotent orbits larger than the original ones in the closure relations.

\begin{theorem}\label{thm:min_to_rect}
For generic levels $\kk$, there exist isomorphisms of vertex algebras
\begin{align}\label{eq:partial_min}
    \fHdeg{2}{0}(\pW{1,1,2})\simeq \pW{2,2},\qquad \fHdeg{1,2}{0}(\pW{1,1,2})\simeq \pW{1,3}.
\end{align}
\end{theorem}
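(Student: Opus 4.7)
The plan is to compute both partial reductions at the level of the Wakimoto free field realization and match the induced screening operators with those of the target $\W$-algebra. By Proposition~\ref{Several Wakimoto realizations for W-algebras}, $\pW{1,1,2}$ embeds into $\pWak{1,1,2}{0}=\ff{1,2,4}\otimes\heis$ as the joint kernel of three screening operators $S_1^f, S_2^f, S_3^f$, and the image of the Wakimoto resolution \eqref{Wakimoto resolution of W algebras} realizes $\pW{1,1,2}$ as the kernel of $\bigoplus_i[S_i^f]$ inside this free field algebra. Working at generic $\kk$ throughout, we lift the target of the reduction to this resolution rather than working directly on $\pW{1,1,2}$.

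First, I would apply the reduction functor $\fHdeg{2}{0}$ (respectively $\fHdeg{1,2}{0}$) termwise to the Wakimoto resolution of $\pW{1,1,2}$. This requires showing that the Wakimoto modules $\pWak{1,1,2}{\lambda}$ are acyclic for these partial reductions. This should follow from a Feigin--Frenkel--Genra-style argument: in a suitably chosen labelled pyramid of $\ydiagram{1,1,2}$, the generator $e$ (respectively the fields $v_1^+, v_2^+$) is realised as a $\beta$-type field on the Wakimoto side, so the differential \eqref{BRST complex from min to subreg} (respectively \eqref{second BRST complex 1}) kills one $\beta\gamma$-pair (respectively two pairs) without higher cohomology, exactly as in the proof of acyclicity for affine Wakimoto modules recalled around \eqref{Wakimoto resolution of W algebras}.

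The central computation is then twofold. First, identify the reduced free field algebra: eliminating the relevant $\beta\gamma$-pair from $\ff{1,2,4}\otimes\heis$ should yield $\pWak{2,2}{0}=\ff{1,3}\otimes\heis$ in Case (II) of Table~\ref{tab: list of Wakimoto realizations of Walg} for the Virasoro-type reduction, and the analogous free field algebra $\pWak{1,3}{0}$ of Case (III) or Proposition~\ref{inhomogeneous screening for sub} for the Bershadsky--Polyakov-type reduction. Second, verify that the screenings $S_i^f$ induced on the reduced free field algebra coincide with those describing the target $\W$-algebra. This is a direct OPE calculation using the explicit polynomials $\widehat{\rho}_{f,\Gamma}(e_i)$ listed in Table~\ref{tab: list of Wakimoto realizations of Walg}.

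The main obstacle will be this matching of screenings. The reduced screenings may not literally equal any row of Table~\ref{tab: list of Wakimoto realizations of Walg}, but should agree up to a change of labelled pyramid for the target orbit; the introduction of the unconventional subregular tableau in Proposition~\ref{inhomogeneous screening for sub} is motivated precisely by this point and should allow the Bershadsky--Polyakov-type reduction to match cleanly with the Wakimoto side of $\pW{1,3}$. Once the acyclicity is verified and the screenings are matched, the isomorphisms \eqref{eq:partial_min} follow by identifying the two resulting Wakimoto realizations of vertex algebras inside the reduced free field algebra.
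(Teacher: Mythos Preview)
Your strategy---apply the partial reduction termwise to the Wakimoto resolution of $\pW{1,1,2}$, establish acyclicity of the Wakimoto modules, and then match the induced screening operators with those of the target---is exactly the paper's approach. Two implementation details, however, differ from what you anticipate. For the Virasoro-type reduction the paper uses pyramid (I.1), where indeed $e=\beta_2$; eliminating $\ff{2}$ leaves $\ff{1,4}\otimes\heis$ (not $\ff{1,3}$ directly), and the induced screenings $\beta_1,\ -(\wun+\gamma_1\beta_4),\ \gamma_4$ match row (II.2) only after a \emph{partial Fourier transform} $\beta_4\mapsto-\gamma_3$, $\gamma_4\mapsto\beta_3$---so the matching is not merely a change of labelled pyramid but an isomorphism of $\bg$-systems. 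For the Bershadsky--Polyakov reduction the paper switches to pyramid (I.2), where $v_1^+=\beta_2-\beta_5\gamma_3$ and $v_2^+=-\beta_5$ are \emph{not} pure $\beta$-fields; an explicit automorphism of $\ff{2,3,5}$ is needed to straighten the differential before the acyclicity argument goes through, and the resulting screenings then match (III.1). The inhomogeneous subregular realization of Proposition~\ref{inhomogeneous screening for sub} is not used here; it enters instead in the reduction $\pW{2,2}\to\pW{1,3}$ of Theorem~\ref{thm: pqhrRectoSub}.
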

\begin{proof}
We show the first isomorphism. 
We apply $\fH{2}$ to the exact sequence \eqref{Wakimoto resolution of W algebras} for Table \ref{tab: list of Wakimoto realizations of Walg} (I.1).
In \eqref{Wakimoto realizations for W-alg}, the affine subalgebra $\V^{\kk+1}(\sll_2)$ is realized as 
    \begin{align}
    \begin{split}
        &e=\beta_2,\qquad h=\beta_1\gamma_1-2\beta_2\gamma_2-\beta_4\gamma_4+b_2,\\
        &f=\beta_1\gamma_1\gamma_2 + \beta_1\gamma_4 - \beta_2\gamma_2^2 - \beta_4\gamma_2\gamma_4 + \gamma_2 b_2 + (\kk+1) \partial\gamma_2.
    \end{split}
    \end{align}
Then the BRST complex for the Wakimoto representation $\pBRST{2}(\pWak{1,1,2}{\lambda})$ is 
\begin{align}
    \pBRST{2}(\pWak{1,1,2}{\lambda})=\pWak{1,1,2}{\lambda}\otimes  \bigwedge{}^{\bullet}_{\varphi,\varphi^*},\quad \fd{2}=\intNOz{(\beta_2+\wun) \varphi^*}
\end{align}
and thus 
\begin{align}
    \fHdeg{2}{n}(\pWak{1,1,2}{\lambda})\simeq \delta_{n,0} \ff{1,4}\otimes \Fock{\lambda}.
\end{align}
Therefore, $\fH{2}(\pW{1,1,2})$ is computed by the cohomology of the complex 
\begin{align}
0\rightarrow \fHdeg{2}{0}(\pWak{1,1,2}{0}) \overset{[d_0]}{\rightarrow} \fHdeg{2}{0}(\presol{1,1,2}{1}) \overset{[d_1]}{\rightarrow} \cdots \overset{[d_{N-1}]}{\rightarrow} \fHdeg{2}{0}(\presol{1,1,2}{N}) \rightarrow 0
\end{align}
and thus 
\begin{align}\label{FFR by two-step reduction 1}
\fHdeg{2}{0}(\pW{1,1,2})\simeq \bigcap_{i=1}^3 \ker {\ts{i}{1,1,2}}|_{\ff{1,4}\otimes \heis},\quad \ts{i}{1,1,2}=\scrNO{i}{P_i}
\end{align}
where 
\begin{align}\label{coefficients in the second reduction}
    P_1=\beta_1,\quad P_2=-(\wun+\gamma_1\beta_4),\quad P_3=\gamma_4.
\end{align}
By applying an isomorphism of $\bg$-systems we call a \emph{partial Fourier transform}
\begin{align}
    \iota:\ff{1,4}\xrightarrow{\simeq}\ff{1,3},\quad \beta_1,\gamma_1\mapsto\beta_1,\gamma_1,\qquad\beta_4,\gamma_4\mapsto-\gamma_3,\beta_3,
\end{align}
we find that the realization \eqref{FFR by two-step reduction 1} is equivalent to the one for $\pW{2,2}$ in Table \ref{tab: list of Wakimoto realizations of Walg} (II.2). This proves the first isomorphism.

The second isomorphism is shown similarly. We start with the resolution \eqref{Wakimoto resolution of W algebras} of $\pW{1,1,2}$ with Table \ref{tab: list of Wakimoto realizations of Walg} (I.2).   
Under this realization, $v_1^+, v_2^+$ in the differential $\fd{1,2}$ (see \eqref{second BRST complex 1}) are expressed as 
    \begin{align}
        v_1^+=\beta_2-\beta_5\gamma_3,\quad  v_2^+=-\beta_5\ (=e_{(-1)}v_1^+).
    \end{align}
    where $e=\beta_3$. 
    Hence, for the representations $\pWak{1,1,2}{\lambda}$, we have
    \begin{equation}
    \fd{1,2}=\intNOz{(\beta_2-\beta_5\gamma_3+\wun)\varphi_1^*-\beta_5\varphi_2^*}.
    \end{equation}
   After applying the automorphism
    \begin{align}
    \begin{array}{lll}
         \beta_1\mapsto \beta_2-\beta_5\gamma_3, & \beta_3\mapsto \beta_3-\beta_5\gamma_2, & \beta_5\mapsto \beta_5,\\
    \gamma_1\mapsto \gamma_2, & \gamma_3\mapsto \gamma_3, &\gamma_5 \mapsto \gamma_5+\gamma_2\gamma_3,   
    \end{array}  
    \end{align}
    on $\ff{2,3,5}$, $\fd{1,2}$ is expressed as 
        \begin{equation}
    \fd{1,2}=\intNOz{(\beta_2+\wun)\varphi_1^*+\beta_5\varphi_2^*}.
    \end{equation}
    Then the cohomology vanishing 
\begin{align}
    \fHdeg{2}{n}(\pWak{1,1,2}{\lambda})\simeq \delta_{n,0} \ff{3}\otimes \Fock{\lambda}
\end{align}
holds and the induced screening operators from Table \ref{tab: list of Wakimoto realizations of Walg} (II.2) are identified with those for $\pW{1,3}$ in Table \ref{tab: list of Wakimoto realizations of Walg} (III.1).
This proves the second isomorphism.
\end{proof}

Next, let us consider the rectangular $\W$-algebra $\pW{2,2}$ and the subregular $\pW{1,3}$.
The $\W$-algebra $\pW{2,2}$ is strongly and freely generated by 
\begin{align*}
    e,h,f,\qquad L_1,L_2, v_1^{\pm}
\end{align*}
of conformal weight $1$ and $2$ respectively. The first three generate the affine vertex algebra $\V^{2(\kk+2)}(\sll_2)$. The second two $L_1, L_2$ are mutually-commuting Virasoro vectors. The last two $v_1^{\pm}$ together with a linear combination of $L_1, L_2$ form an adjoint representation of $\sll_2$. 

Then we can take the quantum Hamiltonian reduction $\fH{2}$, which reduces $\V^{2(\kk+2)}(\sll_2)$ to the Virasoro vertex algebra $\pWlevel{2}{2(\kk+2)}$.
More explicitly, we consider the following BRST complex
\begin{align}\label{second differential for rec}
    \pW{2,2}\otimes \bigwedge{}^{\bullet}_{\varphi,\varphi^*},\quad \fd{2}=\intNOz{(e+\wun) \varphi^*}.
\end{align}

On the other hand, the $\W$-algebra $\pW{1,3}$ is strongly and freely generated by 
\begin{align*}
    J,\qquad L, v^\pm,\qquad \Omega_3
\end{align*}
of conformal weight $1$, $2$, $3$ respectively. The first generates the Heisenberg vertex algebra $\pi^J$ with
\begin{equation}
    J(z)J(w)\sim \frac{\frac{1}{4}(3\kk+8)}{(z-w)^2}.
\end{equation}
For the second, $L$ is a conformal vector, and $v^\pm$ are the highest weight vectors of Fock modules over $\pi^J$ of weight $\pm1$ which satisfy 
\begin{align}\label{OPEs for the natural representations for subreg}
    v^+(z)v^+(w)\sim 0,\quad v^-(z)v^-(w)\sim 0.
\end{align}
Finally, $\Omega_3$ is an element which commutes with $J$. 

The basis $v^+$ of $\C$ equipped with the $\gl_1$-action can be identified with the root vector $E_{1,2}$ inside $\sll_2$. Moreover, they satisfy the same (vanishing) OPEs. 
Therefore, we may apply the quantum Hamiltonian reduction $\fH{2}$.
More explicitly, we consider the following BRST complex
\begin{align}\label{second BRST complex 2}
    \pW{1,3}\otimes \bigwedge{}^{\bullet}_{\varphi,\varphi^*},\quad  \fd{2}=\intNOz{(v^++\wun)\varphi^*}.
\end{align}
The next theorem again identifies the resulting vertex algebras with $\W$-algebras associated with nilpotent orbits larger than the original ones in the closure relations.

\begin{theorem} \label{thm: pqhrRectoSub}
For generic levels $\kk$, there exist isomorphisms of vertex algebras
\begin{align}
    \fHdeg{2}{0}(\pW{2,2})\simeq \pW{1,3},\quad\fHdeg{2}{0}(\pW{1,3})\simeq \pW{4}.
\end{align}
\end{theorem}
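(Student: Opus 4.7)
The proof should mirror the strategy of Theorem~\ref{thm:min_to_rect}: apply $\fH{2}$ to the Wakimoto resolution \eqref{Wakimoto resolution of W algebras} of the source $\W$-algebra term by term, verify cohomology vanishing outside degree $0$ on each Wakimoto module, compute the induced screening operators, and recognize them (possibly after a change of generators) as those listed in Table~\ref{tab: list of Wakimoto realizations of Walg} for the target $\W$-algebra.

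For the first isomorphism $\fHdeg{2}{0}(\pW{2,2})\simeq \pW{1,3}$, I would start from a Wakimoto realization of $\pW{2,2}$ where the $\sll_2$-subalgebra generated by $e,h,f$ is obtained by the standard Wakimoto formulas, so that the screening operators take the form in Table~\ref{tab: list of Wakimoto realizations of Walg} (II.1) or (II.2). In this realization one of the $\beta$-fields plays the role of $e$; after a minor automorphism of $\ff{1,3}$ if necessary (analogous to the rearrangement used in the second part of the proof of Theorem~\ref{thm:min_to_rect}) the differential becomes
\begin{equation*}
    \fd{2}=\intNOz{(\beta+\wun)\varphi^*}
\end{equation*}
on the tensor factor containing that $\beta\gamma$-pair. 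This is the Virasoro-type reduction applied to a single $\bg$-system tensor a Heisenberg field, whose cohomology is well known to be concentrated in degree $0$ and isomorphic to a single Heisenberg algebra; tensoring back with the surviving free fields yields
\begin{equation*}
    \fHdeg{2}{n}(\pWak{2,2}{\lambda})\simeq \delta_{n,0}\,\ff{*}\otimes \Fock{\lambda}
\end{equation*}
for a single remaining $\bg$-index. Taking cohomology of the whole resolution then gives $\pW{1,3}$ as the joint kernel of the induced screenings $[\rs{i}{2,2}]$; after a partial Fourier transform analogous to $\iota$ in the previous proof, these are identified with the screenings of Table~\ref{tab: list of Wakimoto realizations of Walg} (III.1) or (III.2).

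For the second isomorphism $\fHdeg{2}{0}(\pW{1,3})\simeq \pW{4}$, the procedure is the same but simpler. Starting from Table~\ref{tab: list of Wakimoto realizations of Walg} (III.1) or (III.2), there is only one $\bg$-pair left, and the generator $v^+$ of the $\gl_1$-representation \eqref{OPEs for the natural representations for subreg} is expressible as a $\beta$-field (possibly after an automorphism absorbing lower-order terms). The differential \eqref{second BRST complex 2} then reduces to $\intNOz{(\beta+\wun)\varphi^*}$, whose cohomology is concentrated in degree $0$ and removes the last $\bg$-system. The induced screenings become the three Heisenberg-only screenings of Table~\ref{tab: list of Wakimoto realizations of Walg} (IV), realizing $\pW{4}$ as the Feigin--Frenkel intersection.

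The main technical obstacle in both cases is not the cohomology vanishing itself—this reduces to the standard Virasoro reduction of a $\bg$-system—but rather arranging the free-field presentation so that $e$ (respectively $v^+$) appears as a pure $\beta$-field. In the realizations (II.1)/(II.2) for $\pW{2,2}$ and (III.1)/(III.2) for $\pW{1,3}$, the generators $e$ and $v^+$ are obtained from the BRST cohomology computation of $\pW{1,1,2}$ inside $\V^\kk(\sll_4)$, and may initially involve non-linear corrections that must be eliminated by an explicit automorphism of the underlying $\bg$-system, exactly as in the second half of the proof of Theorem~\ref{thm:min_to_rect}. Once this automorphism is identified, the matching of screening operators with the target tables, and hence both isomorphisms, follows.
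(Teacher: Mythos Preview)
Your overall strategy is exactly the one the paper uses, and the second isomorphism goes through essentially as you describe: in the realization of Table~\ref{tab: list of Wakimoto realizations of Walg}~(III.1) one has $v^+=\beta_3$ on the nose, so the differential is $\intNOz{(\beta_3+\wun)\varphi^*}$ with no automorphism needed, the lone $\bg$-pair is killed, and the induced screenings are literally those of~(IV).

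For the first isomorphism, however, two of your guesses are off and the second one is the substantive gap. First, in the realization~(II.1) the affine $\sll_2$-generator is $e=\beta_1+\beta_3$, not a single $\beta$-field; the automorphism that brings the differential into Virasoro form is the linear change $\bt{II}=\beta_1+\beta_3$, $\bt{I}=\beta_1-\beta_3$ (with the dual change on the $\gamma$'s), not a partial Fourier transform. Second, and more importantly, after this rotation the surviving screenings $\widehat{\rho}_{f,\Gamma}(e_1),\widehat{\rho}_{f,\Gamma}(e_2),\widehat{\rho}_{f,\Gamma}(e_3)$ become (up to scalars) $\wun+\gm{I}$, $\bt{I}$, $\wun-\gm{I}$. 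These do \emph{not} match any of the standard entries~(III.1) or~(III.2) of Table~\ref{tab: list of Wakimoto realizations of Walg}, and no $\bg$-automorphism of a single pair will turn them into those. The paper anticipates this by preparing Proposition~\ref{inhomogeneous screening for sub}: an alternative free-field realization of $\pW{1,3}$ attached to the non-standard subregular nilpotent $f_{\mathrm{sub}}^{(1)}=\EM{2,1}+\EM{3,1}+\EM{4,2}+\EM{4,3}$, whose screening coefficients are precisely $\wun+\gamma_2,\ \beta_2,\ \wun-\gamma_2$. The identification with $\pW{1,3}$ then goes through that proposition (and the conjugacy of $f_{\mathrm{sub}}^{(1)}$ with the standard subregular), not through Table~\ref{tab: list of Wakimoto realizations of Walg} directly. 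Without that extra realization your argument for the first isomorphism would stall at the matching step.
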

\begin{proof}
We only sketch the proof as it is similar to that of Theorem \ref{thm:min_to_rect}.
By using the realization of $\pW{2,2}$ given in Table \ref{tab: list of Wakimoto realizations of Walg} (II.1), 
the affine subalgebra $\V^{\kk+1}(\sll_2)$ is realized as 
\begin{align}
    e=\beta_1+\beta_3,\quad h=\sum_{i=1,3}(-2\gamma_i\beta_i+h_i),\quad f=\sum_{i=1,3}(-2\gamma_i^2\beta_i+\gamma_ih_i+ k\partial\gamma_i).
\end{align}
Then the differential \eqref{second differential for rec} on the Wakimoto representations is 
\begin{align}
    \fd{2}=\intNOz{(\beta_1+\beta_3+\wun) \varphi^*}.
\end{align}
Using an automorphism
\begin{equation}\label{isomorphism of FFR}
\begin{gathered}
    \ff{\mathrm{I,II}}\xrightarrow{\simeq} \ff{1,3}\\
    \bt{I}  = \beta_1 - \beta_3, \quad 
    \gm{I} = \frac{1}{2}\big(\gamma_1 - \gamma_3 \big),\quad 
    \bt{II}  = \beta_1 + \beta_3, \quad
    \gm{II}= \frac{1}{2}\big(\gamma_1 + \gamma_3 \big),
\end{gathered}
\end{equation}
we obtain $\fHdeg{2}{n}(\pWak{2,2}{\lambda})\simeq \delta_{n,0}\ff{\mathrm{I}}\otimes \Fock{\lambda}$. 
Then, it maps the screening operators of $\pW{2,2}$ given in Table \ref{tab: list of Wakimoto realizations of Walg} (II.1) onto those \eqref{eq:screening_sub1} of $\pW{1,3}$. This proves the first isomorphism.

In the second case, by using the realization of $\pW{1,3}$ given in Table \ref{tab: list of Wakimoto realizations of Walg} (III.1), the differential is given by
\begin{align}
    \fd{2}=\intNOz{(\beta_3+\wun) \varphi^*}.
\end{align}
for the Wakimoto representations. Then we obtain $\fHdeg{2}{n}(\pWak{1,3}{\lambda})\simeq \delta_{n,0} \Fock{\lambda}$ and the realization of the induced screening operators of $\pW{1,3}$ given in Table \ref{tab: list of Wakimoto realizations of Walg} (III.1) onto those of $\pW{4}$ in Table \ref{tab: list of Wakimoto realizations of Walg} (IV). This proves the second isomorphism.
\end{proof}

\begin{remark} \hspace{0mm}
In Theorems \ref{thm:min_to_rect}, \ref{thm: pqhrRectoSub}, there is a simple rule relating the nilpotent orbits or, equivalently, the Young tableaux. Indeed, the resulting Young tableaux are obtained from the original ones by replacing the boxes corresponding to the ``affine part'' with those of their ``$\W$-algebras" as follows:
\begin{align}\label{Young tableau game}
\begin{array}{lclclcl}
\coloredtableaux{1,1,2}{1,1}&\overset{\fH{2}}{\longmapsto}&\coloredtableaux{2,2}{}, &&   \coloredtableaux{1,1,2}{1,1,1}&\overset{\fH{1,2}}{\mapsto} &   \coloredtableaux{1,3}{},\\
&&&&&&\\
\coloredtableaux{2,2}{1,1} &\overset{\fH{2}}{\mapsto} &   \coloredtableaux{1,3}{} , &&   \coloredtableaux{1,3}{1,1} & \overset{\fH{2}}{\mapsto} &   \coloredtableaux{4}{}.
\end{array}
\end{align}
    Here, we indicate the ``affine part'' by the shadow and use equivalences of Young tableaux by shifting; see \eqref{shifting Young diagrams}.
    Hence, the symbol of the reductions $\fH{2}$ and $\fH{1,2}$ is justified not only by the structure of BRST complexes, which are \emph{Virasoro type} and \emph{Bershadsky--Polyakov type} reductions
    \begin{align*}
    &\V^\kk(\sll_2)\mapsto \HH\left(\V^\kk(\sll_2)\otimes \bigwedge{}^{\bullet}_{\varphi,\varphi^*} \fd{2}\right),\ 
    &\V^\kk(\sll_3)\mapsto \HH\left(\V^\kk(\sll_3)\otimes \bigwedge{}^{\bullet}_{\varphi_{1,2},\varphi^*_{1,2}}, \fd{1,2}\right)
    \end{align*}
    with
    \begin{align*}
        \fd{2}=\intNOz{(E_{1,2}+\wun) \varphi^*},\quad \fd{1,2}=\intNOz{(E_{1,2}+\wun) \varphi^*_1+E_{1,3}\varphi^*_2}
    \end{align*}
    but also the combinatorial Young tableaux rule in \eqref{Young tableau game}.
\end{remark}

\begin{remark} \hspace{0mm}
In addition to those in \eqref{Young tableau game}, there is one more partial reduction: 
\begin{align}
\coloredtableaux{1,1,2}{1,1,1}\overset{\fH{3}}{\mapsto}   \coloredtableaux{4}{}.
\end{align}
Indeed, we have an embedding 
$$\V^0(\nil_+)\hookrightarrow \pW{1,1,2},\quad e, v_1^+,v_2^+\mapsto E_{2,3}, E_{1,2}, E_{1,3}$$
where $\nil_+\subset \sll_3$ is the upper nilpotent subalgebra.
Accordingly, one applies the partial reduction $\fHdeg{3}{0}(\pW{1,1,2})$. 
One can show that $\fHdeg{3}{0}(\pW{1,1,2})\simeq \pW{4}$ using Table \ref{tab: list of Wakimoto realizations of Walg} (I.2).
\end{remark}

As mentioned in the Introduction, the reductions by stages have been studied more in the context of \emph{finite $\W$-algebras}. 
They are defined from the enveloping algebras $\mathscr{U}(\g)$ of simple Lie algebras $\g$ through the finite analog of the BRST complex in \eqref{BRST cohomology}. 
Indeed, one associates to each (conformal) vertex algebra $V$ an associative algebra $A_V$, called \emph{the Zhu's algebra}. 
It is well known that $A_{\V^\kk(\g)}\simeq \mathscr{U}(\g)$. Moreover,
\begin{align}
    A_{C_f^\bullet(\V^\kk(\g))}\simeq \mathscr{U}(\g)\otimes \mathcal{C}\ell(\g_+\oplus \g_+^*)
\end{align}
with $\mathcal{C}\ell(\g_+\oplus \g_+^*)$ the Clifford algebra associated with $\g_+\oplus \g_+^*$. It has naturally a complex structure and agrees with the finite analog of the BRST complex.
By \cite{Ara07, DK06}, the cohomology $\mathscr{U}(\g,f)$, known as the finite $\W$-algebra, is isomorphic to the Zhu's algebra of the corresponding $\W$-algebra $\W^\kk(\g,f)$, i.e. 
\begin{align}
   \mathscr{U}(\g,f) \simeq A_{\W^\kk(\g,f)}.
\end{align}
Applying Zhu's functor $A_\bullet$ commutes with taking the (finite) BRST cohomology \cite[Theorem 8.1]{Ara07}. Therefore, when applying a partial reduction through the affine vertex subalgebras, we obtain the following statement immediately.
\begin{corollary}\label{finite W-algebras}
There exist isomorphisms of associative algebras
\begin{align}
    \fHdeg{2}{0}(\mathscr{U}(\sll_4,\ydiagram{1,1,2}))\simeq \mathscr{U}(\sll_4,\ydiagram{2,2}),\quad \fHdeg{2}{0}(\mathscr{U}(\sll_4,\ydiagram{2,2}))\simeq \mathscr{U}(\sll_4,\ydiagram{1,3}).
\end{align}
\end{corollary}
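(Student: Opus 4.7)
The plan is to deduce this corollary directly from the vertex algebra isomorphisms established in Theorems \ref{thm:min_to_rect} and \ref{thm: pqhrRectoSub} by applying Zhu's functor $A_\bullet$ to both sides. Concretely, I would start from the identities
\begin{align*}
\fHdeg{2}{0}(\pW{1,1,2})\simeq \pW{2,2},\qquad \fHdeg{2}{0}(\pW{2,2})\simeq \pW{1,3},
\end{align*}
each of which is realized as the $0$-th cohomology of a BRST complex built from the affine vertex subalgebra $\V^{\kk+1}(\sll_2)$ (resp.\ $\V^{2(\kk+2)}(\sll_2)$) inside the minimal (resp.\ rectangular) $\W$-algebra. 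Since the differentials $\fd{2}$ involve only fields from these affine subalgebras together with the $bc$-ghosts, the differential descends to the Zhu's algebra level and agrees with the finite analogue of the Drinfeld--Sokolov differential, so it is precisely the complex computing the finite $\W$-algebra reduction $\fHdeg{2}{0}(-)$ at the level of associative algebras.

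The key step is the interchange of Zhu's functor with the BRST cohomology. By \cite[Theorem 8.1]{Ara07}, we have $A_{\W^\kk(\g,f)}\simeq \mathscr{U}(\g,f)$, and more generally the functor $A_\bullet$ commutes with taking BRST cohomology of complexes of the type \eqref{BRST cohomology}. Applied to the two-step reductions above, where one first reduces $\V^\kk(\sll_4)$ to $\pW{1,1,2}$ (resp.\ $\pW{2,2}$) and then reduces along the affine subalgebra, this yields
\begin{align*}
A_{\fHdeg{2}{0}(\pW{1,1,2})}\simeq \fHdeg{2}{0}(A_{\pW{1,1,2}})= \fHdeg{2}{0}(\finW{1,1,2}),
\end{align*}
and identically for the rectangular case. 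Combining with the vertex algebra isomorphisms and $A_{\pW{2,2}}\simeq \finW{2,2}$, $A_{\pW{1,3}}\simeq \finW{1,3}$, one obtains the required associative algebra isomorphisms.

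The only subtle point is verifying that the finite BRST complex obtained by applying $A_\bullet$ to \eqref{second differential for rec} and to the analogous complex for the minimal case is genuinely the one defining the finite partial reduction $\fHdeg{2}{0}(-)$ of the finite $\W$-algebras, rather than some deformation of it. Since both differentials are built from the image of a nilpotent element and the $\wun$ shift arising from the character $\chi$, their images under $A_\bullet$ coincide with the standard Rees-type construction of $\fHdeg{2}{0}$ on $\finW{1,1,2}$ and $\finW{2,2}$. This is precisely the commutation statement of \cite[Theorem 8.1]{Ara07}, so no additional work is needed, and the two isomorphisms follow at once.
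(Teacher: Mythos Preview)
Your proposal is correct and follows essentially the same route as the paper: the paper simply notes that Zhu's functor $A_\bullet$ commutes with BRST cohomology by \cite[Theorem~8.1]{Ara07}, so the corollary follows immediately from the vertex-algebra isomorphisms once one observes (as you do) that both partial reductions in question are taken through genuine affine vertex subalgebras. Your write-up is more detailed than the paper's one-line justification, but the argument is the same.
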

The second isomorphism was already obtained in \cite[\S 4.1]{GJ23} as well as 
the remaining cases 
\begin{align}
    \fHdeg{1,2}{0}(\mathscr{U}(\sll_4,\ydiagram{1,1,2}))\simeq \mathscr{U}(\sll_4,\ydiagram{1,3}),\quad \fHdeg{2}{0}(\mathscr{U}(\sll_4,\ydiagram{1,3}))\simeq \mathscr{U}(\sll_4,\ydiagram{4})
\end{align}
which we cannot recover directly from our results since we apply the reduction through a module over an affine vertex subalgebra rather than the subalgebra itself.
The first isomorphism seems to be new as far as we know, but the classical limit (i.e. after taking the associated graded of suitable filtrations) is obtained in \cite[Example 3.4.10]{Morgan14}.
We note that the first isomorphism completes the missing piece of the reduction by stages between finite $\W$-algebras for $\sll_4$ in \cite{GJ23}.

\section{Cohomology vanishing}\label{sec:cohomology_vanishing}
In this section, we prove cohomology vanishing of the partial Hamiltonian reductions upgrading the isomorphisms in Theorems \ref{thm:min_to_rect} and \ref{thm: pqhrRectoSub}. More explicitly, we have the following isomorphisms.

\begin{theorem}\label{Cohomology vanishing for all levels}
For all levels, we have the following cohomology vanishing 
\begin{align}
    &\fHdeg{1,2}{n}(\pW{1,1,2})\simeq \delta_{n,0}\pW{1,3},\qquad \label{eq:case1a}\\
    &\fHdeg{2}{n}(\pW{1,3})\simeq \delta_{n,0}\pW{4},\label{eq:case1b}\\
    &\fHdeg{2}{n}(\pW{1,1,2})\simeq \delta_{n,0}\pW{2,2},\qquad \label{eq:case2a}\\
    &\fHdeg{2}{n}(\pW{2,2})\simeq \delta_{n,0}\pW{1,3}.\label{eq:case2b}
\end{align}
\end{theorem}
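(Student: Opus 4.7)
The plan is to establish Theorem~\ref{Cohomology vanishing for all levels} via a spectral sequence on a charge filtration of each BRST complex, reducing the $E_1$-page to a Koszul complex for a regular sequence and hence obtaining concentration in cohomological degree zero; the identification of $H^0$ at all levels is then handled by combining a cocycle-lifting argument with the generic-level identification already proved in Theorems~\ref{thm:min_to_rect} and~\ref{thm: pqhrRectoSub}.

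First, for each of the four reductions in the statement, I would introduce a bi-grading on $\W^\kk(\OO)\otimes\bigwedge{}^{\bullet}_{\varphi,\varphi^*}$ by assigning charges to the ghost fields and to the strong generators of the source $\W$-algebra, arranged so that the differential decomposes as $d = d_{\mathrm{st}} + d_\chi$ with $d_\chi$ (coming from the $+\wun$ contributions) strictly decreasing charge by one. The resulting decreasing filtration is compatible with the conformal weight grading: on each finite-dimensional conformal-weight subspace the induced filtration has finite length, so the associated spectral sequence converges strongly.

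Second, the $E_0$-differential is $d_{\mathrm{st}}$. Using the strong and free generating sets already recalled in the excerpt, namely $\{e,h,f,J,v_1^\pm,v_2^\pm,L\}$ for $\pW{1,1,2}$, $\{e,h,f,L_1,L_2,v_1^\pm\}$ for $\pW{2,2}$, and $\{J,L,v^\pm,\Omega_3\}$ for $\pW{1,3}$, I would argue that the associated graded complex factors as the tensor product of a Koszul complex built from the currents being reduced — namely $e$ in the $\fHdeg{2}{\bullet}$ cases and $v_1^+, v_2^+$ in the $\fHdeg{1,2}{\bullet}$ case — with a polynomial algebra in the remaining generators. Freeness of the strong generating set translates into regularity of this sequence, giving Koszul exactness and thus
\begin{equation*}
    E_1^{p,q} = 0 \quad \text{for } q \neq 0,
\end{equation*}
which by convergence yields the vanishing $\fHdeg{\cdot}{n}=0$ for $n \neq 0$ at every level.

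Third, to identify $H^0$ at all levels I would construct a vertex algebra map $\W^\kk(\OO_{\mathrm{target}})\to H^0$ by lifting each strong generator of the target to an explicit cocycle in the source complex; the OPEs among the lifts can be checked directly on the finitely many generators, independently of $\kk$. Injectivity follows from strong and free generation of the target, while surjectivity is obtained by character comparison: the Euler characteristic formula, combined with the vanishing of higher cohomology and the Kac--Wakimoto character of $\W^\kk(\OO)$, forces $\ch H^0 = \ch \W^\kk(\OO_{\mathrm{target}})$. The generic-level isomorphisms already established then pin down $H^0 \simeq \W^\kk(\OO_{\mathrm{target}})$ uniformly in $\kk$.

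The main obstacle I expect is ensuring that the charge filtration behaves well on the non-affine strong generators (the conformal vectors $L$, $L_1$, $L_2$ and the cubic generator $\Omega_3$), so that they persist as free polynomial variables in the associated graded rather than producing spurious relations from quadratic OPE terms. A cleaner substitute is to work with the Li filtration: its associated graded is the arc space of the corresponding Slodowy slice, and the Koszul-exactness claim becomes a transparent statement about restricting functions on this arc space to the vanishing locus of the current(s) being reduced. Alternatively, one can extend the conclusion from generic to all levels by a flatness/deformation argument over $\mathrm{Spec}\,\C[\kk]$: in each fixed conformal weight the cohomology is finite-dimensional and forms a coherent sheaf, so the isomorphism over a Zariski-dense set of levels together with the upper semicontinuity that follows from higher-cohomology vanishing propagates to every $\kk\in\C$.
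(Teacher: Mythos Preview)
Your overall strategy---filter, run a spectral sequence, extract a Koszul-type acyclic piece, then lift cocycles for $H^0$---is the same shape as the paper's, but the central factorisation step has a genuine gap. The claim that the associated graded under a charge filtration splits as ``Koszul complex in the reduced currents $\otimes$ polynomial algebra in the remaining generators'' fails with the naive strong generators: in \eqref{eq:case1a}, for instance, $e,h,f,J$ have nontrivial OPEs with $v_i^+$, so $\langle v_i^+,\varphi_i\rangle$ is not a subcomplex, and the ``remaining'' piece is certainly not polynomial with trivial differential. The paper's fix (following \cite{KW04}) is to pass to carefully modified generators $\df{e}=e+\varphi_1\varphi_2^*$, $\df{a}$, $\df{b}$, $\df{L}$, etc., chosen precisely so that $C^\bullet\simeq C^\bullet_+\otimes C^\bullet_0$ holds as vertex subalgebras and subcomplexes. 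Even then, $C^\bullet_0$ carries a nontrivial double complex in $(d^\chi,d^{\mathrm{st}})$, and a further splitting $C_0\simeq B_0\otimes B_1\otimes B_2$ is needed to kill the pairs $(\df{a},\varphi_1^*)$ and $(\df{e},\varphi_2^*)$; your regular-sequence picture does not see this second layer. For \eqref{eq:case1b} the obstacle you anticipate is not merely technical but decisive: the crossing term $v^+v^-$ in the $\Omega_3$ self-OPE obstructs any such tensor decomposition at the vertex-algebra level, and the paper must first pass to the Li filtration (so your ``cleaner substitute'' is mandatory here, not optional), run the $\natural$-decomposition and double-complex argument on $\gr^F C^\bullet$, and then use a second spectral sequence to transport the vanishing back to $C^\bullet$.

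Two further differences worth noting. The paper does not treat the four cases uniformly: for \eqref{eq:case2a} and \eqref{eq:case2b} the reduction is through a genuine affine subalgebra $\V^\ell(\sll_2)$, so Arakawa's cohomology vanishing for the Kazhdan--Lusztig category \cite{Ara05} applies directly at all levels, bypassing any explicit complex analysis. And for the identification of $H^0$, the paper does not use Euler characteristic or character comparison; it writes down explicit cocycle representatives (e.g.\ $[v_1^-],[v_2^-]$ in \eqref{eq:case1a}, and the deformed $\mathbf{L}^\natural,\mathbf{\Omega}_3^\natural,\mathbf{V}^-$ in \eqref{eq:case1b}) and checks their OPEs against the presentation of the target $\W$-algebra in \cite{CFLN}, handling the finitely many excluded levels by renormalisation. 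Your character and flatness-over-$\C[\mathbf{k}]$ arguments are plausible alternatives, but would require making the level-independence of the graded dimensions precise.
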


Here, we shall distinguish two main cases. 
When the partial reduction functor is applied through an affine vertex subalgebra ($\V^\ell(\sll_2)$ in our cases) the cohomology vanishing follows from the cohomology vanishing of the modules in the Kazhdan--Lusztig category \cite{Ara05}.
This is the context of \eqref{eq:case2a}, \eqref{eq:case2b}.
Since Arakawa's result of cohomology vanishing holds for \emph{all levels}, these isomorphisms also hold for \emph{all levels}. The isomorphism \eqref{eq:case2a} was proven in \cite[Theorem A]{CFLN} using this argument.
Unfortunately, such a conceptual argument does not generalise to the other cases we consider in \S\ref{Sec: Partial reductions} for the moment. We will prove the cohomology vanishing of our specific examples by direct computation.
In the following, we prove the cases \eqref{eq:case1a}, \eqref{eq:case1b}.

\begin{proof}[Proof of \eqref{eq:case1a}]
We adapt the strategy of the proof of \cite[Theorem 4.1]{KW04} dealing with the cohomology vanishing for the reduction of the affine vertex algebra $\mathrm{H}_f^{n\neq0}(\V^\kk(\g))=0$ by using a certain decomposition of the BRST complex.
From \eqref{strong generators for the minimal} and \eqref{BRST complex from min to subreg}, the strong generators of the BRST complex 
\begin{equation}
\begin{gathered}
    C^{\bullet} = \pW{1,1,2}\otimes \bigwedge{}^{\bullet}_{\varphi_{1,2},\varphi_{1,2}^*},
    \quad
    \fd{1,2}=\intNOz{(v_1^++\wun)\varphi_1^*+v_2^+\varphi_2^*}
\end{gathered}
\end{equation}
can be chosen as
\begin{align}
\df{a},\ \df{b},\ \df{e},\ \df{f},\ \df{L},\ {v^+_i},\ {v_i^-},\ {\varphi^*_i},\ {\varphi_i} \quad ( i=1,2)
\end{align}
where we set 
\begin{equation}\label{sub-reg complex strong gens}
\begin{gathered}
\df{e}= e + \varphi_1 \varphi^*_2,\qquad \df{f} = f + \varphi_2\varphi^*_1,\qquad \df{a} = \frac{1}{2} \left( \df{h}+ \df{J} \right), \qquad \df{b} = \frac{1}{2} \left( \df{h} - \df{J} \right),\\
\df{L}= L + \frac{1}{2(\kk+3)} \left( \frac{1}{2} h^2 + ef + fe \right) + \frac{1}{2(\kk+2)}J^2 + \frac{1}{2} \partial h + \partial J + (\partial\varphi_1) \varphi^*_1 - \varphi_2 \partial\varphi^*_2
\end{gathered}
\end{equation}
with 
\begin{equation}
\df{h}= h + \varphi_1\varphi^*_1 - \varphi_2\varphi^*_2,\qquad
\df{J}= J + \varphi_1 \varphi^*_1 + \varphi_2 \varphi^*_2.
\end{equation}
Here we assume $\kk\notin\{-2,-3,-4\}$ since we use the presentation in \cite[Appx. A.3.4]{CFLN} which excludes these levels. Nevertheless, the following reasoning still holds when $\kk$ is equal to one of the excluded levels and can be adapted using an appropriate normalization of the strong generators.

Following \cite{KRW03}, one splits the differential $\fd{1,2}=\chid{1, 2}+\std{1, 2}$ such that
\begin{equation}
	\chid{1, 2}=\intNOz{\varphi_1^*},\qquad
	\std{1, 2}=\intNOz{v_1^+\varphi_1^*+v_2^+\varphi_2^*},
\end{equation}
are two differentials which commute with each other.
By direct computation, one checks that $\chid{1, 2}$ and $\std{1, 2}$ map the strong generators to zero except in the following cases:
\begin{align}\label{differential action I}
	\begin{split}
		&\chid{1, 2}\varphi_1 =\wun,\quad \chid{1, 2}\df{e}= \varphi^*_2, \qquad  \chid{1, 2}\df{a}= \varphi^*_1,\\
		&\std{1, 2} \varphi_1 = v^+_1,\quad \std{1, 2} \varphi_2 = v^+_2,
	\end{split}
\end{align}
and
\begin{align}
	\std{1, 2} v_1^- = &-(\kk+4) \df{L} \varphi^*_2 + (\kk+4)^2 \partial^2 \varphi^*_2 + \kk (\partial \df{e}) \varphi^*_1 - 2 \df{a} \df{b} \varphi^*_2 + \df{a}{}^2 \varphi^*_2
	\\ \notag &+ 3 \df{b}{}^2 \varphi^*_2 + ( 2 \kk + 5) (\partial \df{a})  \varphi^*_2 - ( 2 \kk + 5) (\partial \df{b}) \varphi^*_2 + (\kk+4) \df{a} \partial \varphi^*_2
	\\ \notag &- 3 (\kk+4) \df{b} \partial \varphi^*_2 + 2 \df{e} \df{f} \varphi^*_2 + 2 (\kk+4)\df{e} \partial \varphi^*_1 + 2 \df{e} \df{a} \varphi^*_1 - 2 \df{e} \df{b} \varphi^*_1,
	\\
	\std{1, 2} v_2^- = &(\kk+4) \df{L} \varphi^*_1 - (\kk+4)^2 \partial^2 \varphi^*_1 - (\kk+4) (\partial \df{f}) \varphi^*_2 + 2 \df{a} \df{b} \varphi^*_1
	\\ \notag &-3 \df{a}{}^2 \varphi^*_1 - \df{b}{}^2 \varphi^*_1 - (3\kk+7) (\partial \df{a}) \varphi^*_1 + (\kk+3) (\partial \df{b}) \varphi^*_1 - 3 (\kk+4) \df{a} \partial \varphi^*_1
	\\ \notag &+ (\kk+4) \df{b} \partial \varphi^*_1 - 2 \df{e} \df{f} \varphi^*_1 - 2 (\kk+4) \df{f} \partial \varphi^*_2 - 2 \df{f} \df{a} \varphi^*_2 + 2 \df{f} \df{b} \varphi^*_2.
\end{align}

Define the following subspaces
\begin{align}\label{first decomposition of the BRST complex}
C^\bullet_+=\langle v^+_i, \varphi_i\mid i=1,2 \rangle,\quad 
C_0^\bullet=\langle \df{a}, \df{b}, \df{e}, \df{f}, \df{L}, v_i^-, \varphi^*_i \mid i=1,2 \rangle.
\end{align}
They are closed under OPE and thus define vertex subalgebras of $C^\bullet$ whose PBW basis are obtained by restriction. 
They are also subcomplexes, i.e. 
$C^\bullet \simeq C^\bullet_+ \otimes C_0^\bullet$.
It follows from \eqref{OPEs for the natural representations} and \eqref{differential action I} that $C^\bullet_+$ is a Koszul complex and thus $\HH^n(C^\bullet_+)\simeq \delta_{n,0}\C$. 
Hence, Künneth's formula implies 
$\HH(C^\bullet)\simeq \HH(C^\bullet_0)$ as vertex algebras. 
We compute the latter by using the double complex structure on $C^\bullet_0$ as $\fd{1, 2}  = \chid{1, 2} + \std{1, 2}$.

\begin{table}[h]
\begin{center}
\scalebox{1}{
\begin{tabular}{|c|c|c|c|c|c|c|c|c|c|}
\hline
Field & $\df{a}$ & $\df{b}$ & $\df{e}$ & $\df{f}$ & $\df{L}$ &  $v_1^-$ &  $v_2^-$ & $\varphi^*_1$ &  $\varphi^*_2$
\\
\hline
$\Delta_{\mathrm{old}}$ & $1$ & $1$ & $1$ & $1$ & $2$  & $\frac{3}{2}$ &  $\frac{3}{2}$  & $1$  & $0$
\\
\hline
$\Delta_{\mathrm{new}}$ & $1$ & $1$ & $0$ & $2$ & $2$ & $2$ & $3$ & $1$ & $0$
\\
\hline
Bidegree & $(0, 0)$ & $(0,0)$ & $(0, 0)$ & $(0, 0)$ & $(0, 0)$ & $(1, -1)$ & $(1, -1)$ & $(1, 0)$ & $(1, 0)$
\\
\hline
\end{tabular}
}
\end{center}
\caption{Bidegree and conformal weights of certain strong generators of $C^\bullet_0$ with respect to the conformal vectors $L+ (\partial\varphi_1) \varphi^*_1 - \varphi_2 \partial\varphi^*_2$ ($\Delta_{\mathrm{old}}$) and $\df{L}$ ($\Delta_{\mathrm{new}}$).}
\label{tab:bigrading}
\end{table}

Let introduce a bigrading and a new conformal structure on $C^\bullet_0$ (see Table~\ref{tab:bigrading}) which give our differentials the bidegrees
\begin{align}
\bideg\, \chid{1, 2} = (1, 0),\quad \bideg\, \std{1, 2} = (0, 1).
\end{align}
Then Table \ref{tab:bigrading} above shows that the double complex is contained within the fourth quadrant, and thus, the associated spectral sequence converges.

In order to describe the cohomology $\HH^\bullet (C_0)$, we compute the second sheet of the double graded complex
\begin{align}
E_2^{\bullet,\bullet} := \HH^\bullet \left( \HH^\bullet( C_0, \chid{1, 2}), \std{1, 2}\right).
\end{align}
For this purpose, we decompose the complex $(C_0, \chid{1, 2})$ as
\begin{align}
C_0 \simeq B_0 \otimes B_1 \otimes B_2
\end{align}
where 
\begin{align}
B_0=\stgen{\df{L}, \df{b}, \df{f}, v_1^-, v_2^-},\quad B_1=\stgen{\df{a},\ \varphi^*_1},\quad B_2=\stgen{\df{e},\ \varphi^*_2}
\end{align}
define subcomplexes of $C_0$.
It follows from \eqref{differential action I} that 
\begin{align}
\HH^n(B_0,\chid{1, 2})=\delta_{n,0}B_0,\quad \HH^n(B_i,\chid{1, 2})\simeq \delta_{n,0}\C\quad (i=1,2).
\end{align}
Hence, $\HH^n(C_0,\chid{1, 2})\simeq \delta_{n,0}B_0$ by the Künneth's formula and thus the spectral sequence collapses at the second sheet, which implies the isomorphism of vector spaces
 \begin{align}
\HH^n(C_0)\simeq \delta_{n,0} \HH^0( C_0, \chid{1, 2}).
\end{align}
This implies in particular the cohomology vanishing $\fHdeg{1,2}{\neq0}(\pW{1,1,2})=0$.

Let us now focus on the 0-th cohomology which is strongly generated by the cohomology classes corresponding to $\df{L}, \df{b}, \df{f}, v_1^-, v_2^-$ that are respectively given by
\begin{align}
\df{L}, \df{b}, \df{f}, [v_1^-], [v_2^-]
\end{align}
with
\begin{align}
\begin{split}
[v_1^-] = &-v_1^- +  (\kk^2 + 4 \kk + 6) \partial^2 \df{e} - (3\kk+4) (\partial \df{e}) \df{b} - (2\kk+5) \df{e} \partial \df{b} + (2 \kk+5) (\partial \df{a}) \df{e}
\\
&+ \kk \df{a} \partial \df{e} + \df{a}{}^2 \df{e} + 3 \df{e} \df{b}{}^2 + \df{e}{}^2 \df{f} - 2 \df{a} \df{e} \df{b} - (\kk+4) \df{L} \df{e},
\\
[v_2^-] =\ &v_2^- + \frac{\kk (\kk+4)}{3\kk+8} \df{L} \df{b} -2 \df{e} \df{f} \df{b} + (\kk+4) \df{e} \partial \df{f} + \frac{1}{2} (5 \kk + 12) (\partial \df{a}) \df{a} \\
&- \frac{1}{2} \kk (\partial \df{a}) \df{b} + 2 (\kk+2) (\partial \df{e}) \df{f} - \frac{1}{2} (\kk+2) (\kk+4) \partial \df{L} + (\kk^2 + 5k + 7) \partial^2 \df{a} \\
&- \frac{3 \kk^3 + 23 \kk^2 + 57\kk+40}{3 (3\kk+8)} \partial^2 \df{b} + \frac{49 \kk^2 + 252\kk+320}{3 (3\kk+8)^2} \df{a}{}^3 \\
&- \frac{31 \kk^2 + 156\kk+192}{(3\kk+8)^2} \df{a}{}^2 \df{b} + \frac{31 \kk^2 + 156\kk+192}{(3\kk+8)^2} \df{a} \df{b}{}^2 + 2 \df{a} \df{e} \df{f} \\
&- \frac{1}{2} (\kk+4) \df{a} \partial \df{b} - \frac{29 \kk^2 + 108\kk+64}{(3\kk+8)^2} \df{b}{}^3 - (\kk+4) \df{L} \df{a}
\end{split}
\end{align}
where we temporarily exclude the level $\kk=-8/3$.
Now, it is straightforward to show that if we write $\widehat{L} = \df{L} - \frac{2}{3\kk+8} \df{b}{}^2$ then we have an isomorphism
\begin{align}
\fHdeg{1,2}{0}(\pW{1,1,2}) \rightarrow  \pW{1,3},\quad \widehat{L}, \df{f}, \df{b}, [v_1^-], [v_2^-]\mapsto L_{\text{sub}}, v^-, H, v^+, \Omega_3
\end{align}
for all levels $\kk\notin\{-2,-3,-4\}\cup\{-4,-8/3\}$, where the strong generators and their OPEs are presented in \cite{CFLN}.  
For the excluded levels including $\kk=-8/3$, the proof still holds up to a normalization of certain generators. The details of the computation are left to the reader.
This completes the proof of the first case.
\end{proof}

For the proof of the second case, there is an additional complication.
Indeed, a decomposition of the BRST complex \eqref{second BRST complex 2} similar to the previous case is not expected as the OPE between ${\Omega}_3$ with itself has a crossing term $v^+v^-$. 
We remedy this problem by first passing to the classical setting, i.e. the Poisson vertex algebras, by forming the associated graded for the Li's filtration \cite{Li05}. 
Following \cite{Ara12}, recall that given a vertex (super)algebra $V$, the Li's filtration $F^\bullet V$ is given by 
\begin{align}
    F^pV=\mathrm{span}\left\{a_{(-i_1-1)}^1\dots a_{(-i_\ell-1)}^\ell b; \begin{array}{l} a^1,\dots,a^\ell,b\in V\\ i_1,\dots i_\ell\geq0,\ i_1+\dots i_\ell\geq p \end{array} \right\}
\end{align}
for $p\geq 0$ and $F^pV=V$ for $p\leq 0$. It is decreasing $F^pV \supset F^qV$ ($p\leq q$) and, moreover, separated if $V$ is positively graded $V=\bigoplus_{\Delta>0} V_\Delta$ by conformal grading, i.e. $\cap_{p}F_pV=0$.
Then one takes the associated graded 
$$\gr^F V=\bigoplus_{p\in \Z} \gr_p^F V,\quad \gr_p^F V:=F^pV/F^{p+1}V$$
and denotes by $\sigma_p: F^p V \rightarrow \gr_p^F V$ the principal symbol maps. The space $\gr^F V$ is naturally a Poisson vertex (super)algebra with the product $\sigma_p(a)\sigma_q(b)=\sigma_{p+q}(a_{(-1)}b)$ and the $\lambda$-bracket\footnote{In the associated graded, we use the $\lambda$-bracket notation to fit the common use.}
\begin{align}
    \lbr{\sigma_p(a)}{\sigma_q(b)}=\sum_{n\geq0}\sigma_{p+q-n}(a_{(n)}b) \frac{\lambda^n}{n!}.
\end{align}
In particular, if $V$ is strongly generated by the elements $X_i$ ($i\in I$), then $\gr^F V$ is also strongly generated by the elements $\sigma_0(X_i)$ ($i\in I$), which satisfy the $\lambda$-bracket
\begin{align}
        \lbr{\sigma_0(X_i)}{\sigma_0(X_j)}= \sigma_0(X_{i(0)}X_j).
\end{align}
Indeed, the first quotient $R_V:=\gr_0^F V$, known as the Zhu's $C_2$-algebra, is a Poisson algebra generated by $\sigma_0({X}_i)$ ($i\in I$) with Poisson bracket 
\begin{equation}
    \{\sigma_0({X}_i),\sigma_0({X}_j)\}=\sigma_0(X_{i(0)}X_j)
\end{equation}
and $\gr^F V$ is a quotient of the universal Poisson vertex algebra associated with the Poisson algebra $R_V$. Note that $X_i$ ($i \in I$) strongly generate $V$ if and only if $\sigma_0({X}_i)$ ($i \in I$) generate $R_V$, see also \cite{GN03}.

\begin{proof}[Proof of \eqref{eq:case1b}]
The BRST complex in this case is given by
\begin{equation}\label{content of the BRST cpx}
\begin{gathered}
    C^{\bullet} = \pW{1, 3} \otimes \bigwedge{}^{\bullet}_{\varphi,\varphi^*},
    \quad
    \fd{2}=\intNOz{(v^+ +\wun)\varphi^*}.
\end{gathered}
\end{equation}
It is positively graded by the (symmetric) conformal grading and strongly generated at all levels. In particular, at $\kk\neq-4,-8/3$, one can chose the strong generators
\begin{align}\label{strong generators of the BRST cpx}
    L,\ H,\ \Omega_3,\ v^\pm,\ \varphi,\ \varphi^*.
\end{align}

Consider the Li's filtration of $C^{\bullet}$ and take the associated graded $\cl{C}^\bullet:=\gr^F C$. It is strongly generated by the principal symbols of the generators \eqref{strong generators of the BRST cpx}.
For convenience, denote them by the same letters, e.g. $L$ instead of $\sigma_0(L)$.
Then the differential $\overline{\fd{2}}$ on $\cl{C}^\bullet$ is given by the same formula as in \eqref{content of the BRST cpx} 
\begin{align}
    \overline{\fd{2}}=\{(v^+ +\wun)\varphi^*{}_\lambda \ \cdot\ \}|_{\lambda=0}.
\end{align} 
The spectral sequence associated with the Li's filtration $F^\bullet C^\bullet$ begins with the first sheet computing the cohomology of the associated graded
\begin{align}
E^1=\HH(\overline{C}^\bullet, \overline{\fd{2}})
\end{align}
and ends up with the associated graded of the cohomology
\begin{align}\label{spectral sequence for filtration at infty}
E^\infty=\gr^\mathcal{F}\HH({C}^\bullet, {\fd{2}})
\end{align}
where the filtration $\mathcal{F}^\bullet \HH({C}^\bullet, {\fd{2}})$ is induced from $F^\bullet C^\bullet$:
\begin{align}
\mathcal{F}^p \HH({C}^\bullet, {\fd{2}})=\mathrm{Im}\left(\ker {\fd{2}}\cap F^p {C}^\bullet \rightarrow \HH({C}^\bullet, {\fd{2}}) \right).
\end{align}
As the filtration $F^\bullet C^\bullet$ is of finite length on the homogeneous subspaces of each conformal weight, the spectral sequence converges. 

To compute the sheet $E^1$, replace the strong generators $L$, $H$, $\Omega_3$ with the following elements
\begin{align}
&{\df{L} = L + \frac{2}{3\kk+8} H^2},
\quad
\df{H} = H + \varphi \varphi^*,
\\ \notag &\df{\Omega}_3= \Omega_3 - \frac{8 (\kk+2) (\kk+4)}{3 (3\kk+8)^2} H^3 + \frac{2 (\kk+4)^2}{ (3\kk+8)} L H.
\end{align}
The non-zero $\lambda$-brackets among the strong generators are given by
\begin{align}
&\lbr{\df{H}}{\varphi} = \varphi,
\quad \lbr{\df{H}}{\varphi^*} = \varphi^*,
\quad \lbr{\varphi}{\varphi^*} = \wun, 
\quad \lbr{\df{H}}{v^{\pm}} = \pm v^{\pm},
\\ \notag
&\lbr{v^+}{v^-}= \df{\Omega}_3 + 4 \df{H}{}^3 - 12 \df{H}{}^2 \varphi \varphi^* - 2 (\kk+4)\df{H} \df{L} + 2(\kk+4) \df{L} \varphi \varphi^*.
\end{align}
Moreover, we decompose the differential $\overline{\fd{2}} = \overline{\std{2}}+ \overline{\chid{2}}$ where 
\begin{align}
\overline{\std{2}}= \lbr{v^+ \varphi^*}{\ \cdot \ }|_{\lambda=0},\quad \overline{\chid{2}}=\lbr{\varphi^*}{\ \cdot \ }|_{\lambda=0}.
\end{align}
Then $\overline{\chid{2}}, \overline{\std{2}}$ are mutually-commuting differentials which act trivially except in the following cases:
\begin{align}\label{poisson differential action II}
\begin{split}
&\overline{\chid{2}} \df{H} = \varphi^*,
\quad
\overline{\chid{2}} \varphi = \wun,
\quad
 \overline{\std{2}} \varphi = v^+,
\\
&\overline{\std{2}} v^- =  \df{\Omega}_3 \varphi^* -2(\kk+4) \df{L} \df{H} \varphi^* + 4 \df{H}{}^3 \varphi^*.
\end{split}
\end{align}

Define the Poisson vertex subalgebras 
\begin{equation}
\begin{gathered}
\overline{C}_+^\bullet = \stgen{v^+, \ \varphi},\qquad
\overline{C}_0^\bullet = \stgen{ \df{L}, \ \df{H}, \ \df{\Omega}_3, \ v^-, \ \varphi^* }.
\end{gathered}
\end{equation}
They are closed under the differentials and consequently, we have a decomposition $\overline{C}^\bullet \simeq \overline{C}_{+}^\bullet \otimes \overline{C}_0^\bullet$ into subcomplexes. Applying the same argument as the proof of \eqref{eq:case1a} to conclude $\HH^n ( \overline{C}^{\bullet} ) \simeq \HH ( \overline{C}_0^{\bullet} )$ by using $\HH^n ( \overline{C}_+^{\bullet}) \simeq \delta_{n, 0} \C$. 
Moreover, one has a double complex structure on $C^{\bullet}$, which descends to a double complex structure on $\overline{C}_0^{\bullet}$ whose bigrading is given in Table~\ref{table:SubToRegTable}. 
\begin{table}[h]
\begin{center}
\scalebox{1}{
\begin{tabular}{|c|c|c|c|c|c|}
\hline
Field & $\df{L}$ & $\df{H}$ & $\df{\Omega}_3$ & $v^-$ & $\varphi^*$
\\
\hline
$\Delta_{\text{old}}$ & $2$ & $1$ & $3$ & $2$ & $1$
\\
\hline
$\Delta_{\text{new}}$ & $2$ & $1$ & $3$ & $4$ & $1$
\\
\hline
Bidegree & $(0, 0)$ & $(0, 0)$ & $(0, 0)$ & $(1, -1)$ & $(1, 0)$
\\
\hline
\end{tabular}
}
\end{center}
\caption{Bidegree and conformal weights of certain strong generators of $C^\bullet$ with respect to the conformal vectors $L + (\partial \varphi) \varphi^*$ ($\Delta_{\mathrm{old}}$) and $\mathbf{\df{L}}$ ($\Delta_{\mathrm{new}}$) defined in \eqref{eq:strong_gen_sub_to_reg}.}
\label{table:SubToRegTable}
\end{table}

Consider the decomposition $\overline{C}_0^\bullet \simeq \overline{B}_0 \otimes \overline{B}_1$ where
\begin{align}
\overline{B}_0=\stgen{\df{L}, \df{\Omega}_3, v^-}, \quad \overline{B}_1=\stgen{\df{H},\ \varphi^*}
\end{align}
satisfy
\begin{align}\label{double complex computation}
\HH^n(\overline{B}_0,\overline{\chid{2}})=\delta_{n,0}\overline{B}_0,\quad \HH^n(\overline{B}_1,\overline{\chid{2}})\simeq \delta_{n,0}\C,
\end{align}
as follows from to \eqref{poisson differential action II}. 
Hence, the spectral sequence argument implies an isomorphism of vector spaces
\begin{align}\label{graded approximation}
\HH^n(\overline{C}_0, \overline{\fd{2}})\simeq \delta_{n,0} \HH^0( \overline{C}_0, \overline{\chid{2}}).
\end{align}
The cohomology vanishing $\HH^{\neq0}(\overline{C}^\bullet)=0$ follows and $ \fHdeg{2}{\neq0}(\pW{1,3})=0$ by \eqref{spectral sequence for filtration at infty}. 

Now, we focus on the 0-th cohomology in \eqref{graded approximation}. 
By \eqref{double complex computation}, $\df{L}, \df{\Omega}_3, v^-$ generate $\HH^0(\overline{B}_0,\overline{\chid{2}})$. Setting,
\begin{equation}
    [v^-] = v^- + (\kk+4)\df{H}{}^2 \df{L} - \df{H} \df{\Omega}_3 - \df{H}{}^4,
\end{equation}
one check that
$\df{L}, \df{\Omega}_3, [v^-]$
are strong generators of $\HH^0(\overline{C}_0)\simeq \HH^0(\overline{C})$.
By \eqref{spectral sequence for filtration at infty}, they provide an expression of the strong generators of $\gr^\mathcal{F}\HH^0({C}^\bullet, {\fd{2}})$ as cohomology classes in $\HH^0(\overline{C}^\bullet, \overline{\fd{2}})$. It is straightforward to verify that these generators of $\gr^\mathcal{F}\HH^0({C}^\bullet, {\fd{2}})$ are in fact also generators of
\begin{equation}
R_{\HH^0({C}^\bullet, {\fd{2}})} = \mathcal{F}^0 \HH^0({C}^\bullet, {\fd{2}}) / \mathcal{F}^1 \HH^0({C}^\bullet, {\fd{2}}).
\end{equation}
As mentioned previously, it was shown in \cite{GN03} that $X_i$ ($i \in I$) strongly generate a vertex algebra $V$ if and only if the corresponding principal symbols $\sigma_0 (X_i)$ ($i \in I$) generate $R_V$. Consequently $\fHdeg{2}{0}(\pW{1,3})=\HH^0({C}^\bullet, {\fd{2}})$ is strongly generated by
\begin{align}
    \mathbf{\df{L}},\ \mathbf{\Omega}_3^\natural,\ \mathbf{V}^-
\end{align}
where these fields are deformations of the principal symbol generators $\df{L}, \df{\Omega}_3, [v^-]$ given by
\begin{equation} \label{eq:strong_gen_sub_to_reg}
    \begin{aligned}
\mathbf{\df{L}} = \ &\df{L}  + 2 \partial \df{H} - (\partial \varphi) \varphi^* - 2\varphi (\partial \varphi^*),
\\ \mathbf{\Omega}_3^\natural = \ &\Omega_3^\natural -\frac{1}{3}(\kk+4)(5\kk+16)\partial^2\df{H}-2(\kk+4)\left((\partial \df{H})\df{H}- (\partial \df{H})\varphi\varphi^*-\df{H}\varphi(\partial\varphi^*)\right)
\\ &+\frac{1}{6}(\kk+2)(\kk+4)\left(\partial^2(\varphi\varphi^*)+3\partial(\varphi\partial \varphi^*) \right)+(\kk+4)(\kk+5) \varphi(\partial^2\varphi^*), 
\\ \mathbf{V}^- = \ &[v^-] -3 (\kk+3)^2 (\partial \df{H})^2 - (\kk+3)^3 \partial^3 \df{H}
\\ &- 6(\kk+3) (\partial \df{H}) \df{H}{}^2 -4(\kk+3)^2 (\partial^2 \df{H}) \df{H} + (\kk+3)(\kk+4) (\partial \df{H}) \mathbf{\df{L}}
\\ &+ \frac{1}{2} (\kk+2)(\kk+4) \df{H} (\partial \mathbf{\df{L}}).
    \end{aligned}
\end{equation}

It remains to verify that
\begin{equation}
\begin{aligned}
\pW{4} &\longrightarrow \fHdeg{2}{0}(\pW{1,3}),\\
L_{\text{reg}}&\longmapsto \mathbf{\df{L}},\\
\Omega_{3, \text{reg}}&\longmapsto \mathbf{\Omega}_3^\natural+\frac{1}{2}(\kk+4)^2\partial\mathbf{\df{L}},\\
W_4 &\longmapsto \frac{-4}{a(\kk)}\left(\mathbf{V}^--\frac{1}{4}(\kk+4)^2\mathbf{\df{L}}^2-\frac{1}{2}(\kk+3)\partial \mathbf{\Omega}_3^\natural\right.\\
&\hspace{3cm}\left.+\frac{1}{4}(\kk+3)(\kk+4)(2\kk+5)\partial^2\mathbf{\df{L}}\right)
\end{aligned}
\end{equation}
with prefactor $a(\kk)=(\kk+4) (2\kk+5) (3\kk+10)$ is an isomorphism for all levels $\kk\notin\{-4,-8/3\}\cup\{-4,-5/2,-10/3\}$, where the strong generators and their OPEs are presented in \cite{CFLN}.
This can be done by direct computation.
Note that $\mathbf{\df{L}}, \mathbf{\Omega}^\natural_3, \mathbf{V}^-$ are not closed under OPEs, due to the appearance of unavoidable crossing terms as explained at the start of the proof. However the OPEs are still closed up to $\fd{2}$-exact terms and match those given for $\pW{4}$ in \cite{CFLN} up to such terms. Finally, one can also show the isomorphism for these excluded levels as before, using an appropriate normalization.
\end{proof}

\section{Inverse Hamiltonian reduction}\label{sec:ihr}
In this section, we consider the opposite direction to partial Hamiltonian reduction which takes the form of embeddings between $\W$-algebras up to some free field algebras, that is some tensor products of the $\beta\gamma$-system and the half-lattice vertex algebra $\Pi$ (see \S \ref{sec: preliminary} for the definition).
The existence of such embeddings between $\W$-algebras associated to hook-type nilpotent orbits $\mathbb{O}_{(n,1,\ldots,1)}$ was proven by the second author \cite{Fehily2306.14673, Fehily23}.
For $\sll_4$, they are summarized as follows.
\begin{theorem}[{\cite{Fehily2306.14673, Fehily23}}]\label{iHR for hook-type}
For all levels $\kk$, there exist embeddings of vertex algebras 
\begin{align*}
&\V^\kk(\sll_4) \hookrightarrow \pW{1,1,2} \otimes (\hlat\otimes \bg^{\otimes 2}),\qquad  \pW{1,1,2}  \hookrightarrow \pW{1,3}\otimes (\hlat \otimes \bg),\\
&\pW{1,3} \hookrightarrow  \pW{4} \otimes \hlat.
\end{align*}
\end{theorem}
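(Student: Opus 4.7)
The plan is to construct each embedding explicitly following \cite{Fehily2306.14673, Fehily23}, combining the Wakimoto-type free field realizations of Section~\ref{sec:ffr} with the FMS bosonization \eqref{FMS}. The underlying strategy, going back to Semikhatov's embedding of $\V^\kk(\sll_2)$ into $\mathrm{Vir}\otimes\hlat$, proceeds as follows: in the Wakimoto realization of the source $\W$-algebra, a selected $\bg$-system is FMS-bosonized into a copy of $\hlat$; a vertex algebra automorphism of the resulting free field algebra then rearranges the Heisenberg directions so that the screening operators acting on the source algebra restrict to those of the target $\W$-algebra on one tensor factor and act trivially on the complementary $\hlat\otimes\bg^{\otimes k}$ factor. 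The kernel of this new system of screenings is then the target $\W$-algebra tensored with the complementary free field factor, producing the desired embedding.

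I would treat the three cases in order of increasing complexity. For $\pW{1,3}\hookrightarrow\pW{4}\otimes\hlat$, one starts from the realization of $\pW{1,3}$ on $\ff{3}\otimes\heis$ in case (III.1) of Table~\ref{tab: list of Wakimoto realizations of Walg}; applying FMS to $\ff{3}$ and rotating the Heisenberg directions of $\hlat\otimes\heis$ turns the screening coefficient $\beta_3$ of (III.1) into the unit coefficient of case (IV), so that the kernel becomes $\pW{4}$ tensored with a spectator $\hlat$ factor. The other two embeddings follow the same template, starting respectively from the realization of $\pW{1,1,2}$ in case (I.2) and the affine Wakimoto realization \eqref{Wakimoto realization for affine} of $\V^\kk(\sll_4)$; in each case one applies FMS to the $\bg$-system corresponding to the root whose reduction produces the target algebra and rearranges the remaining free fields so that the transformed screenings reorganize into those of the target $\W$-algebra with the complementary $\hlat\otimes\bg^{\otimes k}$ factor decoupled, for $k\in\{0,1,2\}$ matching the rank difference of the source and target Wakimoto realizations.

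The main obstacle has two faces. First, on the technical side, one must identify the correct $\bg$-system to FMS-bosonize and compute the explicit change of basis for the Heisenberg directions of $\hlat\otimes\bg^{\otimes k}\otimes\heis$ that separates the target screening coefficients from the leftover free fields; this amounts to a finite combinatorial check per case, but the polynomials $\widehat{\rho}_{f,\Gamma}(e_i)$ involve delicate sign and normalization choices that must be matched carefully to the entries of Table~\ref{tab: list of Wakimoto realizations of Walg}. Second, the Wakimoto realization is an isomorphism only at generic levels, whereas the theorem requires embeddings at all $\kk\in\C$. To bridge this gap, one exhibits the embedding directly on the canonical strong generators of the source $\W$-algebra, writing each image as an explicit element of the target tensor product whose components depend polynomially (or rationally) on $\kk$; the required OPE identities then reduce to polynomial identities in $\kk$ which hold on the Zariski-dense generic locus and thus extend to all levels by analytic continuation.
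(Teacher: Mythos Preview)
The paper does not give its own proof of this theorem: it is quoted as a result of \cite{Fehily2306.14673, Fehily23}, and only the explicit formulas for the second embedding are recorded later in \S\ref{sec:ihr}. So there is no ``paper's own proof'' to compare against in the strict sense.

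That said, your outline is essentially the method of the cited references and is exactly parallel to the paper's proof of Theorem~\ref{iHR for non hook-type}: FMS-bosonize a chosen $\bg$-system in the Wakimoto realization, apply an explicit automorphism of the ambient free field algebra (the analogues of Lemma~\ref{lem: nice automorphisms}), and identify the transformed screening operators with those of the target $\W$-algebra tensored with a spectator free field factor. One small difference concerns the passage from generic to all levels. You propose writing the images of strong generators explicitly with polynomial dependence on $\kk$ and invoking Zariski density; the paper (in the proof of Theorem~\ref{iHR for non hook-type}) instead uses the integral form of the Wakimoto realization over $\C[\mathbf{k}]$ from \cite{Gen20} together with the fact that the automorphisms are defined over $\C[\mathbf{k}]$, and then argues by continuity of the family of embeddings. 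Both arguments are valid and amount to the same specialization principle, but the integral-form route avoids having to produce and check the explicit strong-generator formulas case by case.
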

These three embeddings correspond to the arrows on the top line of the diagram in Fig.~\ref{fig:partial inverse HR_sl4}.
We construct the remaining arrows present on Fig.~\ref{fig:partial inverse HR_sl4}, which are stated as follows. 
\begin{theorem}\label{iHR for non hook-type}
For all levels $\kk$, there exist embeddings of vertex algebras
\begin{equation}
\pW{1,1,2} \hookrightarrow \pW{2,2} \otimes \hlat,\qquad \pW{2,2} \hookrightarrow \pW{1,3} \otimes \hlat.
\end{equation}
\end{theorem}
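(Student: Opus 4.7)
The strategy parallels the hook-type constructions of \cite{Fehily2306.14673, Fehily23}: starting from a Wakimoto-type realization \eqref{Wakimoto realizations for W-alg} of the source $\W$-algebra as the joint kernel of screening operators on a free field algebra, the plan is to apply the FMS bosonization \eqref{FMS} to enlarge one of its $\bg$-systems to a $\hlat$-factor, then use an automorphism of the resulting free field algebra to conjugate the screenings into those of the target $\W$-algebra tensored with the identity on $\hlat$. Combined with the target's own realization \eqref{Wakimoto realizations for W-alg}, this yields the desired embedding.

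For $\pW{2,2} \hookrightarrow \pW{1,3} \otimes \hlat$, I would start from Table~\ref{tab: list of Wakimoto realizations of Walg}~(II.1), in which $\pW{2,2}$ is the joint kernel of three screenings on $\ff{1,3} \otimes \heis$ with coefficients $\beta_1$, $-\gamma_1+\gamma_3$, $\beta_3$. Apply the automorphism \eqref{isomorphism of FFR} to split $\ff{1,3}$ as $\ff{\mathrm{I}} \otimes \ff{\mathrm{II}}$, isolating the $\bg$-system that carried the ``affine'' content in the proof of Theorem~\ref{thm: pqhrRectoSub}. FMS-bosonize $\ff{\mathrm{II}}$ into $\hlat$, and combine it with an automorphism of $\hlat \otimes \heis$ shifting the Heisenberg generator by the appropriate bosonic weight vector so that the screenings take the form $\wun+\gamma_{\mathrm{I}}$, $\beta_{\mathrm{I}}$, $\wun-\gamma_{\mathrm{I}}$. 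These are precisely the unconventional screenings of $\pW{1,3}$ given by Proposition~\ref{inhomogeneous screening for sub}, trivially extended to $\hlat$, which realize $\pW{1,3} \otimes \hlat$ as the kernel of the extended screenings inside $\ff{\mathrm{I}} \otimes \hlat \otimes \heis$.

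For $\pW{1,1,2} \hookrightarrow \pW{2,2} \otimes \hlat$, the proof follows the same template starting from Table~\ref{tab: list of Wakimoto realizations of Walg}~(I.1), where $\pW{1,1,2}$ is realized inside $\ff{1,2,4} \otimes \heis$. The computation in the proof of Theorem~\ref{thm:min_to_rect} identifies the $\bg$-system to bosonize as the one carrying the ``affine $\sll_2$'' content that gets absorbed by the partial reduction $\fHdeg{2}{0}$. FMS-bosonizing this $\bg$-system and conjugating by an automorphism of the resulting $\hlat$-factor designed to match the screening coefficients \eqref{coefficients in the second reduction} (which, up to the partial Fourier transform used in that proof, are the screening coefficients of $\pW{2,2}$ from Table~(II.2)) transforms the screenings of $\pW{1,1,2}$ into those of $\pW{2,2}$ tensored with the identity on $\hlat$.

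The main obstacle is pinpointing the correct automorphism of the enlarged free field algebra, since the rectangular partition $(2,2)$ falls outside the hook-type template of \cite{Fehily2306.14673} and its screenings mix the $\bg$-coordinates in a non-linear way. The decisive input is the unconventional screenings produced by the partial reductions of Section~\ref{Sec: Partial reductions}, most notably Proposition~\ref{inhomogeneous screening for sub}; once these are used in place of the ``standard'' sets in Table~\ref{tab: list of Wakimoto realizations of Walg}, the required automorphism is essentially forced by direct matching of the two screening sides, and the verification of the embedding reduces to a careful but routine OPE check, analogous to those performed in \cite{Fehily2306.14673, Fehily23}.
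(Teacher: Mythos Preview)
Your strategy matches the paper's almost exactly: FMS-bosonize the distinguished $\bg$-system, then conjugate by an explicit automorphism of the enlarged free field algebra so that the source screenings are sent to the target's (using Table~(II.2) for the first embedding and Proposition~\ref{inhomogeneous screening for sub} for the second). Two points, however, need attention.

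First, the automorphism is not merely one of $\hlat\otimes\heis$: in both cases it must act nontrivially on the surviving $\bg$-system as well, via a ``twisted Fourier transform'' of the shape $\beta\mapsto -\gamma\,\ee^{c}$, $\gamma\mapsto \beta\,\ee^{-c}$ (this is what turns $\gamma_4$ into $\beta_4$ in the first case and $\bt{I}\,\ee^{-c}$ into $\gm{I}$ in the second). A pure shift of the Heisenberg generators cannot convert the coefficients $\beta_1,\ -\gamma_1+\gamma_3,\ \beta_3$ into $\wun+\gm{I},\ \bt{I},\ \wun-\gm{I}$; the paper records the full automorphism in Lemma~\ref{lem: nice automorphisms}, and this is the ``main obstacle'' you allude to.

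Second, and more seriously, your argument as written only establishes the embeddings for \emph{generic} $\kk$, since the identification of the $\W$-algebras with the joint kernels of screenings in \eqref{Wakimoto realizations for W-alg} is only valid generically. The theorem claims all levels. The paper closes this gap by observing that the Wakimoto realizations and the automorphisms of Lemma~\ref{lem: nice automorphisms} are defined over the polynomial ring $\C[\mathbf{k}]$, so the families $\{\W^\kk(\OO)\subset \text{(free fields)}\}_{\kk\in\C}$ are continuous; since at generic $\kk$ the source lands in the target (by the screening comparison), the inclusion persists at all specializations. Alternatively, one can extract an explicit formula for the embedding on strong generators and verify the OPEs directly at every level---which may be what your final sentence intends---but this step must be stated, since otherwise the conclusion does not follow.
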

The first isomorphism was obtained by two of the authors by brute force computation in \cite{CFLN}. Here, we give a more conceptual proof.
As in the original cases in Theorem \ref{iHR for hook-type}, the trick to construct these embeddings is to use automorphisms of free field algebras, which transform screening operators for one $\W$-algebra to those for the other. 
In the proof of Theorem \ref{iHR for non hook-type}, the following automorphisms will be used, which can be checked by straightforward calculation.

\begin{lemma}\label{lem: nice automorphisms}\hspace{0mm}
\begin{enumerate}[wide, labelindent=0pt]
\item There exists an automorphism of vertex algebras 
$$\heis \otimes (\hlat \otimes \ff{\mathrm{1,4}}) \xrightarrow{\simeq} \heis \otimes (\hlat \otimes \ff{\mathrm{1,4}})$$
which sends the generators as follows:
\begin{equation} 
\begin{split}
   & h_1 \mapsto h_1, \quad 
    h_2 \mapsto h_2 - (\kk+4) c, \quad 
    h_3 \mapsto h_3 + (\kk+4) c, \\
   & \beta_1 \mapsto \beta_1, \quad 
    \gamma_1 \mapsto \gamma_1, \quad
    \beta_4 \mapsto -\gamma_4 \ee^{c}, \quad 
    \gamma_4 \mapsto  \beta_4 \ee^{-c}, \\
   & c \mapsto c, \quad 
    \ee^{mc} \mapsto \ee^{mc}, \quad
    d \mapsto d - \frac{1}{4}(3\kk+8) c + \frac{1}{2}h_1 + h_2 - \frac{1}{2}h_3 - 2 \beta_4 \gamma_4.
\end{split}
\end{equation}
\item There exists an automorphism of vertex algebras 
$$\heis \otimes (\hlat \otimes \ff{\mathrm{I}}) \xrightarrow{\simeq} \heis \otimes (\hlat \otimes \ff{\mathrm{I}})$$
which sends the generators as follows:
\begin{equation} 
\begin{split}
    &h_1\mapsto  h_1 - (\kk+4) c, \quad 
    h_2\mapsto  h_2 + (\kk+4) c, \quad 
    h_3\mapsto  h_3 - (\kk+4) c, \\
    &c\mapsto  c, \quad
    \ee^{m c} \mapsto  \ee^{m c}, \quad
    d \mapsto  d - (\kk+3) c + h_1 + h_3 - 2 \bt{I}\gm{I},\\
    &\bt{I} \mapsto  -\gm{I} \ee^{c},  \quad
    \gm{I} \mapsto  \bt{I} \ee^{-c}.
\end{split}
\end{equation}
\end{enumerate}
\end{lemma}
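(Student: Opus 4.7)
The plan is to verify, for each of the two maps, that it defines a vertex algebra endomorphism and that it is invertible. Since the algebras $\heis \otimes (\hlat \otimes \ff{1,4})$ and $\heis \otimes (\hlat \otimes \ff{\mathrm{I}})$ are strongly and freely generated by the listed fields, the reconstruction theorem reduces the endomorphism check to verifying that the proposed images reproduce all singular OPEs among the source generators. Invertibility then follows by writing down the explicit inverse, which has the same form with opposite signs on the shift parameters and the $\bg$-flip reversed; composing the map with its inverse returns the identity on every generator.

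Most of the required OPE identities are immediate consequences of two structural facts: $(c,c)=0$ in the half-lattice vertex algebra $\hlat$, and the tensor factors $\heis$, $\hlat$, $\ff{\star}$ have pairwise trivial OPEs in the source. For part (1), the image of each $h_i$ differs from the original by a scalar multiple of $c$; since $c$ is self-orthogonal and orthogonal to $\h$, the Heisenberg OPEs among the new $h_i$ are preserved, and $c$ and the vertex operators $\ee^{mc}$ are fixed by the map. For the $\bg$-pair $(\beta_4,\gamma_4)$, the flip $\beta_4 \mapsto -\gamma_4\ee^{c},\ \gamma_4\mapsto \beta_4 \ee^{-c}$ is verified using $\ee^{c}(z)\ee^{-c}(w)\sim \wun$ (which holds precisely because $(c,c)=0$) together with the standard $\bg$-OPEs; the pair $(\beta_1,\gamma_1)$ is untouched.

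The main obstacle is the verification of the OPEs involving the new $d$, whose shift collects four pieces of different type: a multiple of $c$, a Heisenberg combination of $h_1,h_2,h_3$, and the quadratic $-2\beta_4\gamma_4$. Each contributes to $d'(z)d'(w)$ and to the cross OPEs with $h_i'$, $c$, $\beta_4'$, $\gamma_4'$ in a distinct way: the $c$-piece via the lattice pairing $(c,d)\neq 0$, the Heisenberg piece via $(\alpha_i,\alpha_j)(\kk+4)$, the quadratic piece via Wick double contractions, and the $\ee^{\pm c}$ factors appearing in the new $\beta_4',\gamma_4'$ via the lattice OPE with the $c$-piece of $d'$. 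The coefficients $\tfrac{3\kk+8}{4}$, $\tfrac{1}{2}$, $1$, $-\tfrac{1}{2}$, $-2$ are calibrated precisely so that these contributions cancel in $d'(z)d'(w)$ (restoring $(d,d)=0$) and add up to the original OPEs in each cross pair. Part (2) is handled by the same bookkeeping with $\bt{I}, \gm{I}$ in place of $\beta_4,\gamma_4$ and a symmetric Heisenberg shift on $h_1$ and $h_3$, which mildly simplifies the self-OPE computation for the new $d$.
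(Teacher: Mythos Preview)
Your approach is correct and matches the paper's: the authors simply state that the lemma ``can be checked by straightforward calculation'' and give no further proof, so your outline of that calculation (preservation of OPEs on generators plus an explicit inverse) is exactly what is intended. One small imprecision: the half-lattice factor $\hlat$ is not \emph{freely} strongly generated, since one has the relation $\no{\ee^{c}\,\ee^{-c}}=\wun$; but as your map fixes all the $\ee^{mc}$ and $c$, this relation and the lattice OPEs among them are preserved automatically, so the oversight is harmless.
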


\begin{proof}[Proof of Theorem \ref{iHR for non hook-type}]
Assume first that $\kk$ is generic.
For the first embedding, we start with the realization in Proposition \ref{Several Wakimoto realizations for W-algebras} with Table \ref{tab: list of Wakimoto realizations of Walg} (I.1).
By applying the embedding \eqref{FMS} for the subalgebra $\ff{\mathrm{2}}$, we obtain the realization
\begin{align}\label{extended Wakimoto for rec?}
\pW{1,1,2}\simeq \bigcap_{i=0}^3 \ker {\s{i}{1,1,2}}\subset \heis \otimes (\hlat \otimes \ff{\mathrm{1,4}}),
\end{align}
where 
\begin{align}
\s{0}{1,1,2}=\sFMS,\quad \s{i}{1,1,2}=\int Y(P_i,z)\ \dd z
\end{align}
for $i=1,\ldots,3$ with 
\begin{align}
    P_1=\beta_1 \hwt{-\frac{1}{\kk+4}\alpha_1},\quad 
    P_2=(\underbrace{\hwt{c}}_{\beta_2} - \gamma_1 \beta_{4}) \hwt{-\frac{1}{\kk+4}\alpha_2}, \quad 
    P_3=\gamma_4 \hwt{-\frac{1}{\kk+4}\alpha_3}.
\end{align}
Now, we apply the automorphism in Lemma \ref{lem: nice automorphisms} (1), which transforms $P_i$'s as follows:
\renewcommand{\arraystretch}{1.3}
\begin{align}
\begin{array}{lll}
   P_1= \beta_{1} \hwt{-\frac{1}{\kk+4}\alpha_1} &\mapsto & \widehat{P}_1= \beta_1 \hwt{-\frac{1}{\kk+4}\alpha_1},\\
   P_2= (\wun - \gamma_1 (\beta_{4}\hwt{-c})) \hwt{-\frac{1}{\kk+4}(\alpha_2-(\kk+4)c)}&\mapsto & \widehat{P}_2=
(\wun - \gamma_1 \gamma_4) \hwt{-\frac{1}{\kk+4}\alpha_2},\\
   P_3= (\gamma_4\hwt{c}) \hwt{-\frac{1}{\kk+4}(\alpha_3+(\kk+4)c)} &\mapsto & \widehat{P}_3=-\beta_4 \hwt{-\frac{1}{\kk+4}\alpha_3}.
\end{array}
\end{align}
\renewcommand{\arraystretch}{1}
By Proposition \ref{Several Wakimoto realizations for W-algebras} with Table \ref{tab: list of Wakimoto realizations of Walg} (II.2), the screening operators $\int Y(\widehat{P}_i,z)\ \mathrm{dz}$ for $i=1,\ldots,3$ are identified with those for $\pW{1,3}$ (up to non-zero scalar) and thus the embedding \eqref{extended Wakimoto for rec?} induces
\begin{align}\label{embedding 1}
\pW{1,1,2}\hookrightarrow \pW{2,2}\otimes \hlat\subset \heis \otimes (\hlat \otimes \ff{1,4})
\end{align}
as desired. 

The proof for the second embedding is similar. Here, we start with the realization in Proposition \ref{Several Wakimoto realizations for W-algebras} with Table \ref{tab: list of Wakimoto realizations of Walg} (II.1).
By applying the automorphism \eqref{isomorphism of FFR} and then the embedding \eqref{FMS} for the subalgebra $\ff{\mathrm{II}}$, we obtain the realization
\begin{align}\label{extended Wakimoto for rec}
\pW{2,2}\simeq \bigcap_{i=0}^3 \ker {\s{i}{2,2}}\subset \heis \otimes (\hlat \otimes \ff{\mathrm{I}}),
\end{align}
where 
\begin{align}
\s{0}{2,2}=\sFMS,\quad \s{i}{2,2}=\int Y(P_i,z)\ \dd z
\end{align}
for $i=1,\ldots,3$ with 
\begin{align}
    P_1=(\bt{I}+\underbrace{\hwt{c}}_{\bt{II}})\hwt{-\frac{1}{\kk+4}\alpha_1},\quad P_2=\gm{I}\hwt{-\frac{1}{\kk+4}\alpha_2}, \quad P_3=(-\bt{I}+\underbrace{\hwt{c}}_{\bt{II}})\hwt{-\frac{1}{\kk+4}\alpha_3}.
\end{align}
The automorphism in Lemma \ref{lem: nice automorphisms} (2) transforms $P_i$'s as follows:
\renewcommand{\arraystretch}{1.3}
\begin{align}
\begin{array}{lll}
   P_1= (\wun+\bt{I}\hwt{-c})\hwt{-\frac{1}{\kk+4}(\alpha_1-(\kk+4)c)} &\mapsto & \widehat{P}_1= (\wun + \gm{I}) \ee^{-\frac{1}{\kk+4}\alpha_1},\\
   P_2= (\gm{I}\hwt{-c}) \hwt{-\frac{1}{\kk+4}(\alpha_2+(\kk+4)c)}&\mapsto & \widehat{P}_2=
 \bt{I} \hwt{-\frac{1}{\kk+4}\alpha_2},\\
   P_3= (\wun - \bt{I} \hwt{-c})\hwt{-\frac{1}{\kk+4}(\alpha_3-(\kk+4)c)} &\mapsto & \widehat{P}_3= (\wun - \gm{I})\hwt{-\frac{1}{\kk+4}\alpha_3}.
\end{array}
\end{align}
\renewcommand{\arraystretch}{1}
By Proposition \ref{inhomogeneous screening for sub}, the screening operators $\int Y(\widehat{P}_i,z)\ \mathrm{dz}$ for $i=1,\ldots,3$ are identified with those for $\pW{1,3}$ and thus the embedding \eqref{extended Wakimoto for rec} induces
\begin{align}\label{embedding 2}
\pW{2,2}\hookrightarrow \pW{1,3}\otimes \Pi \subset \heis \otimes (\hlat \otimes \ff{\mathrm{I}})
\end{align}
as desired. 

Finally, we show the embedding for all levels. Note that the Wakimoto realization of $\W$-algebras admits an integral form \cite{Gen20}, which replaces the level $\kk$ to the polynomial ring $\C[\mathbf{k}]$ with specializations $\mathbf{k}\mapsto \kk \in \C$ to recover the realization at each level. 
As for the first case, this implies that the embeddings $\{\pW{2,2}\subset \ff{1,4}\otimes \heis\}_{\kk\in\C}$ form a continuous family characterized by the kernel of screening operators \eqref{Wakimoto realizations for W-alg} at generic levels.
This is the same for the embeddings $\{\pW{1,1,2}\subset \heis \otimes (\hlat \otimes \ff{1,4})\}_{\kk\in \C}$ as Lemma \ref{lem: nice automorphisms} holds over $\C[\mathbf{k}]$. 
Since the embedding \eqref{embedding 1} for generic levels is obtained by the characterization by the same screening operators by forgetting the one corresponding to $\sFMS$ for $\pW{1,1,2}$, it extends to all levels by continuity. The second case can be treated similarly. This completes the proof. 
\end{proof}

In the proof above, the embeddings \eqref{embedding 1} and \eqref{embedding 2} factor through the kernel of the screening operator $\sFMS$.
It is clear from the construction that imposing this additional kernel characterizes the image of the embeddings at generic levels.
\begin{corollary} Let $\kk$ be generic.
\begin{enumerate}[wide, labelindent=0pt]
\item There exists an isomorphism
\begin{align}
    \pW{1,1,2} \left.\xrightarrow{\simeq}\ker \intNOz{\hwt{u_{\mathrm{min}}}}\ \right|_{\pW{2,2} \otimes \hlat} \subset \heis \otimes (\hlat \otimes \ff{\mathrm{1,4}})
\end{align} with $u_{\mathrm{min}}=\frac{1}{2}(d - \frac{1}{4}(3\kk+4)c - \frac{1}{2}h_1 - h_2 + \frac{1}{2}h_3 - 2 \beta_4 \gamma_4)$.
\item There exists an isomorphism
\begin{align} \label{eq:finalscreening}
\pW{2,2} \left.\xrightarrow{\simeq}\ker \intNOz{\hwt{u_{\mathrm{rec}}}}\ \right|_{\pW{1,3} \otimes \hlat} \subset \heis \otimes (\hlat \otimes \ff{\mathrm{I}})
\end{align}
with $u_{\mathrm{rec}}= \frac{1}{2}(d-(\kk+2)c - h_1-h_3 - 2\bt{I}\gm{I})$.
\end{enumerate} 
\end{corollary}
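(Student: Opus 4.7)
The plan is to read the extra screening operator directly off the construction used in the proof of Theorem \ref{iHR for non hook-type}. In that argument, $\pW{1,1,2}$ was realized inside $\heis \otimes (\hlat \otimes \ff{1,4})$ as the intersection of \emph{four} screening operator kernels: the three Wakimoto screenings $\s{i}{1,1,2}$ for $i=1,2,3$ together with the FMS operator $\s{0}{1,1,2}=\sFMS$, the latter encoding the bosonization $\ff{2}\hookrightarrow\hlat$. By Lemma \ref{lem: nice automorphisms}(1), the automorphism $\phi$ sends the first three to the Wakimoto screenings of $\pW{2,2}$ given in Table \ref{tab: list of Wakimoto realizations of Walg} (II.2), and since these transformed screenings act trivially on the $\hlat$ factor, their common kernel inside $\heis \otimes (\hlat \otimes \ff{1,4})$ is exactly $\pW{2,2}\otimes\hlat$. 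Hence $\phi(\pW{1,1,2})$ coincides with the kernel of the transformed $\sFMS$ restricted to $\pW{2,2}\otimes\hlat$.

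The remaining step is to compute $\phi(\sFMS)$ explicitly. Since $\sFMS=\int Y(\hwt{u},z)\,\dd z$ with $u=\tfrac{1}{2}(c+d)$, and since $\phi$ fixes $c$ and acts on $d$ by the formula in Lemma \ref{lem: nice automorphisms}(1), this reduces to a single substitution $u\mapsto \tfrac{1}{2}(c+\phi(d))$, which yields the stated $u_{\mathrm{min}}$ after collecting terms. The proof of part (2) is identical: starting from the realization \eqref{extended Wakimoto for rec} of $\pW{2,2}$ as the intersection of four screening kernels in $\heis \otimes (\hlat \otimes \ff{\mathrm{I}})$, applying the automorphism of Lemma \ref{lem: nice automorphisms}(2) and substituting the corresponding formula for $\phi(d)$ produces $u_{\mathrm{rec}}$.

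The main point requiring care is the verification that intersecting \emph{all four} screening kernels really cuts out $\pW{1,1,2}$ (respectively $\pW{2,2}$), and not some strictly larger subspace. This is exactly where the genericity hypothesis enters: the Wakimoto presentation \eqref{Wakimoto realizations for W-alg} realizes $\pW{1,1,2}$ as the triple intersection kernel inside $\ff{1,2,4}\otimes\heis$, and the FMS bosonisation \eqref{FMS} characterizes $\ff{2}\subset\hlat$ as $\ker\sFMS$. Transporting these two facts across $\phi$ yields precisely the claimed description of the images of the embeddings \eqref{embedding 1} and \eqref{embedding 2}. I expect this consistency check to be the only non-routine ingredient, since everything else amounts to the linear-algebraic bookkeeping already prepared in Lemma \ref{lem: nice automorphisms}.
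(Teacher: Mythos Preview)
Your proposal is correct and follows the same route as the paper, which merely notes that the Corollary is ``clear from the construction'' in the proof of Theorem~\ref{iHR for non hook-type}: the image of each embedding is obtained by transporting the four-screening realization through the automorphism of Lemma~\ref{lem: nice automorphisms}, and the transported $\sFMS$ supplies the extra operator. One small caveat: the literal substitution $u\mapsto\tfrac{1}{2}(c+\phi(d))$ produces the $h_i$-terms with signs opposite to those in the stated $u_{\mathrm{min}}$ and $u_{\mathrm{rec}}$, so either the displayed formulae carry a sign slip or some care is needed in tracking how the automorphism acts on the Fock-module vertex $\hwt{u}$ (equivalently, in which direction the conjugation is taken); this does not affect the argument itself.
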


Moreover, combining Theorem \ref{iHR for hook-type} and \ref{iHR for non hook-type}, we obtain the following. 
\begin{corollary}\label{thm:allinvred}
Let $\lambda, \mu$ be two partitions of $4$ such that $\mathbb{O}_\lambda \geq \mathbb{O}_\mu$. Then there exists an embedding
\begin{equation} 
    \W^\kk (\OO_\mu) \hookrightarrow \W^\kk (\OO_\lambda) \otimes \beta \gamma^{N-M} \otimes \hlat^M
\end{equation}
where $M$ and $N$ are integers determined by $\lambda$ and $\mu$.
More precisely, 
\begin{equation}
    N=\frac{1}{2}(\dim \OO_\lambda-\dim \OO_\mu)
\end{equation}
and $M$ is given by the number of inverse Hamiltonian reductions used.
\end{corollary}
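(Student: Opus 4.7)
The plan is to chain together the individual inverse Hamiltonian reductions of Theorems \ref{iHR for hook-type} and \ref{iHR for non hook-type} along a path from $\OO_\mu$ up to $\OO_\lambda$ in the diagram of Figure~\ref{fig:partial inverse HR_sl4}. Concretely, I would first fix any sequence of orbits
\begin{equation*}
\OO_\mu = \OO_{\mu^{(0)}} < \OO_{\mu^{(1)}} < \cdots < \OO_{\mu^{(r)}} = \OO_\lambda
\end{equation*}
in which each consecutive pair $(\OO_{\mu^{(i)}}, \OO_{\mu^{(i+1)}})$ appears as the source and target of one of the five embeddings supplied by those theorems. Since the five nilpotent orbits of $\sll_4$ form a totally ordered chain under closure and every pair of comparable orbits is connected by one of the five stated embeddings, such a sequence exists.

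Next, to each step $i \mapsto i+1$ the chosen theorem associates an embedding of the form
\begin{equation*}
\W^\kk(\OO_{\mu^{(i)}}) \hookrightarrow \W^\kk(\OO_{\mu^{(i+1)}}) \otimes \beta\gamma^{n_i} \otimes \hlat,
\end{equation*}
valid at every level. A direct inspection of the five embeddings gives the uniform formula $n_i = \tfrac{1}{2}(\dim\OO_{\mu^{(i+1)}} - \dim\OO_{\mu^{(i)}}) - 1$: explicitly, $n_i = 2$ for $\V^\kk(\sll_4) \to \pW{1,1,2}$, $n_i = 1$ for $\pW{1,1,2} \to \pW{1,3}$, and $n_i = 0$ in the remaining three cases. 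I would then tensor the $i$-th embedding on the right with the identity on the free field factors accumulated from earlier steps and compose all $r$ maps, producing an embedding
\begin{equation*}
\W^\kk(\OO_\mu) \hookrightarrow \W^\kk(\OO_\lambda) \otimes \beta\gamma^{\sum_i n_i} \otimes \hlat^r.
\end{equation*}
Setting $M := r$ and telescoping the dimensions gives $\sum_i n_i = \tfrac{1}{2}(\dim\OO_\lambda - \dim\OO_\mu) - r = N - M$, as claimed.

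The only point requiring care is that the composition makes sense: tensoring any vertex algebra embedding with the identity on a further vertex algebra again yields an embedding, and the accumulated $\beta\gamma$ and $\hlat$ factors may be freely reordered up to canonical isomorphism. Both are formal properties of the tensor product of vertex algebras, so there is no genuine obstacle. The main content is thus the bookkeeping verification of the five values of $n_i$, which is immediate from Theorems \ref{iHR for hook-type} and \ref{iHR for non hook-type}. The chain-dependence of $M$ --- for instance, going from $\pW{1,1,2}$ to $\pW{1,3}$ directly gives $M=1$, while passing through $\pW{2,2}$ gives $M=2$ --- is consistent with the statement, which only records the length of the chosen sequence of reductions.
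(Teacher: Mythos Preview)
Your proof is correct and matches the paper's approach: the paper gives no explicit argument for this corollary, merely stating it follows by ``combining Theorem~\ref{iHR for hook-type} and~\ref{iHR for non hook-type}'', which is precisely the composition-along-a-chain you carry out in detail. Your verification that $n_i = \tfrac{1}{2}(\dim\OO_{\mu^{(i+1)}} - \dim\OO_{\mu^{(i)}}) - 1$ in all five cases and the resulting telescoping to $N-M$ simply makes explicit what the paper leaves to the reader.
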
 

\begin{remark}
    Note that in Corollary \ref{thm:allinvred}, $N$ does not depend on the path we use to relate the two $\W$-algebras $\W^\kk (\OO_\mu)$ and $\W^\kk (\OO_\lambda)$ whereas $M$ does.
    For instance, for $\lambda=(3,1)$ and $\mu=(2,1,1)$, using the direct embedding between hook-type partitions returns $M=1$, whereas going through the rectangular $\W$-algebra gives $M=2$.
    For applications to representation theory, the value of $M$ used depends on the problem at hand. For example, we choose $M$ minimal when determining whether modules induced by the inverse Hamiltonian reduction are almost irreducible \cite{AKR21}.
\end{remark}

The explicit embeddings in Theorem \ref{iHR for hook-type} and \ref{iHR for non hook-type} are helpful for studying the representation theory.
The first and last embeddings in Theorem \ref{iHR for hook-type} are explicitly described in \cite[Appx.\ A]{Fehily2306.14673} and \cite[\S 3.2]{Fehily23} respectively. The second is known to exist, and the following expressions can be obtained similarly as in the minimal-to-affine case:
\begin{equation}
\begin{gathered}
    \pW{1,1,2}  \hookrightarrow \pW{1,3}\otimes (\hlat \otimes \bg)\\
    h \mapsto  2 J - \beta \gamma -\frac{1}{2} \big( (\kk+5)c - d \big), \quad
    e \mapsto \gamma \ee^{c}, \\
    f \mapsto v^- + 2 J \beta \ee^{-c}- \frac{1}{2}(3\kk+7) \beta c \ee^{-c} + \frac{1}{2} \beta d \ee^{-c} + (\kk+2) (\partial \beta) \ee^{-c}, \\
    J \mapsto -(\kk+3) c - d - 2 \beta \gamma, \quad 
    v_1^- \mapsto \ee^{c}, \\
    L \mapsto L - \partial J - \frac{1}{2}(\partial \beta) \gamma - \frac{3}{2}\beta \partial \gamma + \frac{1}{2}cd + \frac{1}{4} (3\kk+5) \partial c-\frac{3}{4}\partial d, \\
    v_1^+ \mapsto \beta^2 \gamma^3 + (3\kk+4) \beta (\partial \gamma) \gamma + (c+d) \beta \gamma^2 + \frac{1}{2}(2\kk+5) \partial(c+d)\gamma +2(\kk+3) (\partial \beta) \gamma^2 \\ \hspace{2em}
    +(\kk+1) (\kk+2) \partial^2 \gamma +\frac{1}{4}(c+d)^2 \gamma + \frac{1}{4}(\kk+2)(3\kk+8) c^2 \gamma - v^+ \ee^{-c} \\
    \hspace{-1em}+\frac{1}{2}\big(-4(\kk+2)J + (3\kk^2+15\kk+16)c + \kk d \big)\partial \gamma \\ \hspace{2em}
    -(\kk+4) L \gamma + v^- \gamma^2 \ee^c + 2 \big(J-(\kk+2)c \big) J \gamma
\end{gathered}
\end{equation}
and the expressions of the generators $v^\pm_2$ are deduced using the OPEs.
The formula for the first embedding in Theorem \ref{iHR for non hook-type} is found in \cite[Proposition 4.5, Remark 4.9]{CFLN}. The following formula for the second embedding is derived by direct computation:
\begin{gather}
\pW{2,2} \hookrightarrow \pW{1,3} \otimes \hlat\\
\begin{split}
    &e  \mapsto \ee^c,\quad
    h  \mapsto 2b,\quad
    L_{\text{tot}}    \mapsto L + \frac{1}{2}c d -\partial a,\quad
    v_2^+   \mapsto -\Big(J + \frac{3\kk+8}{2(\kk+2)} a \Big) \ee^c,\\
    &f  \mapsto \Big( (\kk+4)L - 2 J^2 + \frac{1}{4}v^+ 
     - \frac{1}{4}v^- - a^2 - (2\kk+5) \partial a \Big)\ee^{-c}.
\end{split}
\end{gather}
where $L_\text{tot} = L_1 + L_2 + \frac{1}{2(\kk+2)}J^2$ and we set 
\begin{equation}
\begin{gathered}
    a = \frac{1}{2}\big( -(\kk+2)c + d \big), \quad
    b = \frac{1}{2}\big( (\kk+2)c + d \big).
\end{gathered}
\end{equation}
The images of the remaining generators of $\pW{2,2}$ can be obtained from these by imposing the relevant OPEs.

\section{Partial reductions for modules in Kazhdan--Lusztig category}\label{sec: Partial reductions for modules}
Let us assume $\kk$ to be generic. 
Let $\KL^\kk(\g)$ denote the  Kazhdan--Lusztig category, i.e., the category of $\V^\kk(\g)$-modules which are bounded from below by the conformal grading and with finite-dimensional graded spaces.
As $\kk$ is generic, $\KL^\kk(\g)$ is semisimple and the simple modules are the Weyl modules 
\begin{equation}
    \weyl_\lambda^\kk=U(\widehat{\g})\otimes_{U(\g[t]\oplus\C K)}L_\lambda,
\end{equation}
which are induced from the simple $\g$-modules $L_\lambda$ of dominant integral highest weights $\lambda \in P_+$.

The Weyl modules $\weyl_\lambda^\kk$ admit resolutions by Wakimoto modules of the form
\begin{align}\label{Wakimoto resolution of Weyl modules}
    0\rightarrow  \weyl_{\lambda}^\kk \rightarrow \affWak{\lambda}\overset{\bigoplus S_{i,\lambda}}{\longrightarrow} \bigoplus_{i=1,\ldots,3} \affWak{s_i\circ \lambda}\rightarrow \mathrm{C}_2^\lambda \rightarrow \cdots \rightarrow \mathrm{C}_N^\lambda \rightarrow 0
\end{align}
with 
\begin{align}
    \mathrm{C}_i^\lambda=\bigoplus_{\begin{subarray}c w\in W\\ \ell(w)=i\end{subarray}} \affWak{w\circ \lambda},
\end{align}
which generalize the case $\lambda=0$ in \eqref{Wakimoto resolution of affine}.
Here $S_{i,\lambda}$ are defined as $S_{i,\lambda}=S_{i}^{[h_i(\lambda)+1]}$ where we set
\begin{align}\label{product of screening operators}
    S_{i}^{[n]}=\int_{\Gamma} S_i(z_1)\dots S_{i}(z_{n})\ \dd z_1\dots \dd z_{n}\colon \affWak{\lambda}\rightarrow \affWak{\lambda-n \alpha_i}
\end{align}
in general for some local system $\Gamma$ on the configuration space $Y_{n}=\{(z_1,\dots,z_{n})\mid z_p\neq z_q\}$.
The following proposition describes the induced homomorphism $[S_{i}^{[n]}]$ when applying the reduction $\HH^0_{\OO_f}$ to \eqref{product of screening operators}.
\begin{proposition}\label{prop: screening for modules}
The following diagram commutes
\begin{center}
\begin{tikzcd}
\HH^0_{\OO_f}(\affWak{\lambda}) \arrow{rr}{[S_{i}^{[n]}]} \arrow{d}{\simeq} && \HH^0_{\OO_f}(\affWak{\lambda-n \alpha_i}) \arrow{d}{\simeq}\\
\affWak{\lambda,f} \arrow{rr}{S_{i}^{f,[n]}} && \affWak{\lambda-n \alpha_i,f}
\end{tikzcd}
\end{center}
where the vertical isomorphisms are given by \eqref{Wakimoto for Walg} and $S_{i}^{f,[n]}$ is given by the formula
\begin{align}
    S_{i}^{f,[n]}=\int_{\Gamma} S^f_i(z_1)\cdots S^f_{i}(z_{n})\ \dd z_1\cdots \dd z_{n}\colon \affWak{\lambda,f}\rightarrow \affWak{\lambda-n \alpha_i,f}.
\end{align}
\end{proposition}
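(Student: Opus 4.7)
The plan is to lift the single-screening identification $[S_i]=S_i^f$ -- which is the $n=1$ case, recorded in \eqref{screeinings for Walg} and due to \cite{Gen20} -- to arbitrary $n$ by exploiting the factorised structure of the screening currents and the compatibility of $\HH^0_{\OO_f}$ with multi-point products and with integration against the local system defining $\Gamma$.

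First, I would regard $S_i(z_1)\cdots S_i(z_n)$ as an operator-valued meromorphic function on the configuration space $Y_n$, taking values in $\mathrm{Hom}(\affWak{\lambda},\affWak{\lambda-n\alpha_i})$. The key structural observation is that each current factors into a $\beta\gamma$-polynomial piece $\widehat{\rho}(e_i)$, living in $\ff{1,\ldots,6}$, times a Heisenberg exponential $\hwt{-\frac{1}{\kk+4}\alpha_i}$ living in $\heis$. The BRST differential $d$ of $\HH_{\OO_f}$ acts only on the first factor, and the current $S_i^f(z)$ has exactly the same Heisenberg factor, the $\beta\gamma$-polynomial being replaced by the reduced $\widehat{\rho}_{f,\Gamma}(e_i)$.

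Next, I would establish the chiral identity
\begin{equation*}
\bigl[S_i(z_1)\cdots S_i(z_n)\bigr] \;=\; S_i^f(z_1)\cdots S_i^f(z_n)
\end{equation*}
for every configuration in $Y_n$. Since $d$ satisfies the Leibniz rule on products of bosonic currents at distinct points, the identity reduces to applying the base case $[S_i(z)]=S_i^f(z)$ slot by slot via a telescoping argument; the residual $\beta\gamma$-contact terms produced by bringing two currents past each other are rational functions in $(z_p-z_q)$ whose coefficients live in the $\beta\gamma$-system, and they descend consistently to the quotient under the replacement $\widehat{\rho}(e_i)\mapsto \widehat{\rho}_{f,\Gamma}(e_i)$.

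Finally, I would exchange the cohomology with $\int_\Gamma$. The integration cycle $\Gamma$ is a section of a rank-one local system on $Y_n$ whose monodromy comes entirely from the Heisenberg factors $(z_p-z_q)^{(\alpha_i,\alpha_i)/(\kk+4)^2}$, hence is common to both integrands. Together with the $\HH_{\OO_f}$-acyclicity of Wakimoto modules used around \eqref{Wakimoto resolution of W algebras}, this lets $\HH^0_{\OO_f}$ pass through $\int_\Gamma$; combined with the chiral identity above it yields the proposition. The main obstacle is the bookkeeping in the telescoping step, i.e.\ tracking the rational $\beta\gamma$-contact terms on $Y_n$ under the reduction; once the framework of operator-valued functions on the configuration space equipped with the appropriate local system is laid out, this should reduce to routine free-field manipulations built on the $n=1$ case of \cite{Gen20}.
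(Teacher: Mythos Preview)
Your approach has a genuine gap in the telescoping step. The ``base case'' you invoke, $[S_i(z)]=S_i^f(z)$ as a pointwise identity of currents, is not what the $n=1$ result of \cite{Gen20} gives you: that result is the \emph{integrated} statement $[S_i]=S_i^f$ on cohomology. The unintegrated currents $S_i(z_j)$ are \emph{not} chain maps for the BRST differential $d$ (only their zero modes are $\V^\kk(\g)$-module homomorphisms, hence commute with $d$); concretely $[d,S_i(z)]$ is a total $z$-derivative, not zero. Consequently the expression $[S_i(z_1)\cdots S_i(z_n)]$ cannot be factored into $[S_i(z_1)]\cdots [S_i(z_n)]$, and the slot-by-slot reduction you propose is not defined. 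The remark about rational $\beta\gamma$ contact terms ``descending consistently'' does not repair this: it presupposes exactly the pointwise compatibility you are trying to establish.

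The paper's proof avoids this issue by not reducing to $n=1$. It reopens the isomorphism $\HH^0_{\OO_f}(\affWak{\lambda})\simeq\affWak{\lambda,f}$ via the semi-regular bimodule structure on $\bg^{\blacktriangle}$ and the switching isomorphism $\xi$ of \cite{Ara14,ACL19}. The switching property \eqref{switching property} is a \emph{vector-space} isomorphism of the entire complex that intertwines the right action $\widehat{\rho}(a(z))\otimes 1$ with $\widehat{\rho}(a(z))\otimes 1+1\otimes a(z)$; applied to the screening fields it converts each $\widehat{\rho}(e_i)$ into $\widehat{\rho}_{f,\Gamma}(e_i)$ at the level of currents, simultaneously for all $n$ factors, before any integration. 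This is precisely the pointwise input your telescoping would need, and obtaining it requires the structural argument rather than the black-box $n=1$ statement.
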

\begin{proof}
Recall that the quantum Hamiltonian reduction $\HH_{\OO_f}(M)$ can be written in terms of the semi-infinite cohomology for the loop algebra $\g_+(\!(z)\!)$:
\begin{align}
    \HH_{\OO_f}(M)=\HH\left(M\otimes \bigwedge{}^{\bullet}_{\g_+}, d \right)
    =\HH^{\frac{\infty}{2}}(\g_+(\!(z)\!);M\otimes \C_\chi)
\end{align}
where $\C_\chi$ is the one-dimensional $\g_+(\!(z)\!)$-module on which the currents $a(z)$ ($a\in \g_+$) act by pairing $a(z)\mapsto (f,a)$ and $M\otimes \C_\chi$ is regarded as the tensor representation over $\g_+(\!(z)\!)$.
By taking $M=\affWak{\lambda}$ and the good coordinates using the decomposition $N_+\simeq G_{0,+} \ltimes G_+$ as in \S \ref{sec:ffr}, we have 
\begin{align}
    \HH_{\OO_f}(\affWak{\lambda})\simeq \HH^{\frac{\infty}{2}}(\g_+(\!(z)\!);\bg^\blacktriangle\otimes \C_\chi)\otimes (\bg^\bigstar\otimes \Fock{\lambda})
\end{align}
with $\blacktriangle= \{i=1,\ldots,6 ; \Gamma(\alpha_i)>0\}$.
The free field algebra $\bg^\blacktriangle$ has the structure of a $\g_+(\!(z)\!)$-module called a semi-regular bimodule \cite{Ara14, Voro} whose module structure is given by the homomorphism $\Psi\colon \V^0(\g_+) \rightarrow \bg^\blacktriangle$ together with the anti-homomorphism 
$\widehat{\rho}\colon \V^0(\g_+) \rightarrow \bg^\blacktriangle$
induced from the left and right multiplications of $G_+$ on itself as in \S \ref{sec:ffr}. By \cite{Ara14, ACL19}, there exists a vector space isomorphism 
\begin{align}
   \xi\colon \bg^\blacktriangle\otimes \C_\chi\xrightarrow{\simeq} \bg^\blacktriangle\otimes \C_\chi
\end{align}
which intertwines the actions
\begin{align}\label{switching property}
\begin{split}
 &(\xi \circ \sigma)(a(z))= (a(z)\otimes 1) \circ \xi,\\
 &\xi \circ \left(\widehat{\rho}(a(z))\otimes 1\right)=\left(\widehat{\rho}(a(z))\otimes 1+1\otimes a(z)\right)\circ \xi
\end{split}
\end{align}
for $a\in \g_+$ where $\sigma$ is the coproduct $\sigma(a(z))=a(z)\otimes 1+1\otimes a(z)$.
Then $\xi$ induces an isomorphism  
\begin{align}
   \HH^{\frac{\infty}{2}}(\g_+(\!(z)\!);\bg^\blacktriangle\otimes \C_\chi) \xrightarrow{\simeq} \HH^{\frac{\infty}{2}}(\g_+(\!(z)\!);\bg^\blacktriangle)\otimes \C_\chi \simeq \C_\chi=\C.
\end{align}
The advantage here is that the higher conformal weight subspaces of $\bg^\blacktriangle$ for $\HH^{\frac{\infty}{2}}(\g_+(\!(z)\!);\bg^\blacktriangle)$ are contracted to be zero. 
Hence, under this isomorphism,
\begin{equation}
    \begin{aligned}
&\longstick{[S_{i}^{[n]}]}{\HH^0_{\OO_f}(\affWak{\lambda})}\\
&=\longstick{\int_{\Gamma} \prod_{j=1}^n Y(\widehat{\rho}(e_i)\fockIO{i},z_j)\ \dd z_1\cdots \dd z_{n}}{\HH^{\frac{\infty}{2}}(\g_+(\!(z)\!);\bg^\blacktriangle\otimes \C_\chi) \otimes (\bg^\bigstar\otimes \Fock{\lambda})}\\
&\overset{\eqref{switching property}}{=}
\longstick{\int_{\Gamma} \prod_{j=1}^n Y(\widehat{\rho}_{f,\Gamma}(e_i)\fockIO{i},z_j)\ \dd z_1\cdots \dd z_{n}}{\C \otimes (\bg^\bigstar\otimes \Fock{\lambda})}\\
&=\longstick{\int_{\Gamma} S^f_i(z_1)\cdots S^f_{i}(z_{n})\dd z_1\cdots \dd z_{n}}{\affWak{\lambda,f}}\\
&=\longstick{S_{i}^{f,[n]}}{\affWak{\lambda,f}}
    \end{aligned}
\end{equation}
as desired.
\end{proof}
\begin{remark}
    The proof of Proposition \ref{prop: screening for modules} generalizes to higher ranks for $\W$-algebras admitting even good gradings.
\end{remark}

\begin{corollary}\label{Free field realization of W-modules}
    For $\lambda\in P_+$, there is an isomorphism of $\W^\kk(\OO_f)$-modules
    \begin{align*}
        \HH^0_{\OO_f}(\weyl_\lambda^\kk)\simeq \bigcap_{i=1}^3 \ker \longstick{S_{i,\lambda}^f}{\affWak{\lambda,f}},\quad S_{i,\lambda}^f:=S_{i,f}^{[h_i(\lambda)+1]}.
    \end{align*}
\end{corollary}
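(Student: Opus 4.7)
The plan is to apply the reduction functor $\HH^\bullet_{\OO_f}$ to the Wakimoto resolution \eqref{Wakimoto resolution of Weyl modules} of the Weyl module $\weyl_\lambda^\kk$, and then combine the $\HH_{\OO_f}$-acyclicity of Wakimoto modules with Proposition \ref{prop: screening for modules} to realize the $0$-th cohomology as an intersection of kernels of reduced screening operators.

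First, since $\HH_{\OO_f}^{\neq 0}(\affWak{\mu}) = 0$ for every weight $\mu$ (by \cite[Proposition 4.5]{Gen20}), applying $\HH^\bullet_{\OO_f}$ to the exact sequence \eqref{Wakimoto resolution of Weyl modules} produces a complex whose $p$-th cohomology is $\HH^p_{\OO_f}(\weyl_\lambda^\kk)$. In particular, there is an exact sequence of $\W^\kk(\OO_f)$-modules
\begin{align*}
    0 \to \HH^0_{\OO_f}(\weyl_\lambda^\kk) \to \HH^0_{\OO_f}(\affWak{\lambda}) \xrightarrow{\bigoplus [S_{i,\lambda}]} \bigoplus_{i=1}^3 \HH^0_{\OO_f}(\affWak{s_i\circ \lambda}),
\end{align*}
so that $\HH^0_{\OO_f}(\weyl_\lambda^\kk)$ coincides with the intersection of the kernels of the induced maps $[S_{i,\lambda}]$ for $i=1,2,3$.

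Next, I would identify the terms and the maps. The isomorphism \eqref{Wakimoto for Walg} gives $\HH^0_{\OO_f}(\affWak{\mu}) \simeq \affWak{\mu,f}$ for every weight $\mu$ appearing in the resolution. For the maps, Proposition \ref{prop: screening for modules} applied with $n = h_i(\lambda)+1$ transports the induced map $[S_{i,\lambda}] = [S_i^{[h_i(\lambda)+1]}]$ to the operator $S_{i,\lambda}^f = S_{i,f}^{[h_i(\lambda)+1]}$ on $\affWak{\lambda,f}$. Combining these identifications yields
\begin{align*}
    \HH^0_{\OO_f}(\weyl_\lambda^\kk) \simeq \bigcap_{i=1}^3 \ker \longstick{S_{i,\lambda}^f}{\affWak{\lambda,f}},
\end{align*}
as claimed.

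The substantive content is already carried by Proposition \ref{prop: screening for modules}, whose proof used the semi-regular bimodule structure and the intertwining isomorphism $\xi$; the present corollary is a formal consequence. The only point worth checking carefully is that the statement of Proposition \ref{prop: screening for modules} genuinely covers the iterated screening operators $S_i^{[n]}$ entering the Wakimoto resolution of Weyl modules (as opposed to only the simple screenings $S_i$), but this is indeed built into its formulation. No further obstacle is anticipated.
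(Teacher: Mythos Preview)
Your proof is correct and follows precisely the approach implicit in the paper: the corollary is stated there without proof because it is the direct consequence of applying $\HH^\bullet_{\OO_f}$ to the resolution \eqref{Wakimoto resolution of Weyl modules}, invoking the $\HH_{\OO_f}$-acyclicity of Wakimoto modules (exactly as in the derivation of \eqref{Wakimoto resolution of W algebras} for $\lambda=0$), and then identifying the induced maps via Proposition~\ref{prop: screening for modules} with $n=h_i(\lambda)+1$. Your remark that the key input is already Proposition~\ref{prop: screening for modules}, and that nothing further is needed, is exactly the point.
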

Thanks to this free field realization, we may generalize the results on $\W$-algebras in \S \ref{Sec: Partial reductions} to their modules.

\begin{theorem}\label{Isom for the modules 1} \hspace{0mm}
The $\W^\kk({\OO_f})$-modules $\HH_{\OO_f}^0(\weyl^\kk_\lambda)$ $(\lambda\in P_+)$ are simple at generic levels $\kk$ and satisfy the following isomorphisms:
\begin{align}
\begin{array}{lll}
\fHdeg{2,2}{0}(\weyl_\lambda^k)\simeq \fHdeg{2}{0}(\fHdeg{1,1,2}{0}(\weyl_\lambda^k)),&& \fHdeg{1,3}{0}(\weyl_\lambda^k)\simeq \fHdeg{2}{0}(\fHdeg{2,2}{0}(\weyl_\lambda^k)),\\
&&\\
\fHdeg{4}{0}(\weyl_\lambda^k)\simeq \fHdeg{2}{0}(\fHdeg{1,3}{0}(\weyl_\lambda^k)),&& \fHdeg{1,3}{0}(\weyl_\lambda^k)\simeq \fHdeg{1,2}{0}(\fHdeg{1,1,2}{0}(\weyl_\lambda^k)).
\end{array}
\end{align}
\end{theorem}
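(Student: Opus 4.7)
The plan is to leverage the free field realization in Corollary \ref{Free field realization of W-modules}: each module $\HH^0_{\OO_f}(\weyl^\kk_\lambda)$ is an intersection of kernels of the screening operators $S^f_{i,\lambda} = S^{f,[h_i(\lambda)+1]}_{i}$ acting on the Wakimoto representation $\affWak{\lambda,f}$. Since both sides of each isomorphism will be identified as such intersections on free field algebras, the task reduces to matching the screening data on the two sides, with Proposition \ref{prop: screening for modules} as the bridge between screening currents and the higher powers $S^f_{i,\lambda}$ relevant for nonzero $\lambda$.

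To establish simplicity, I would apply $\HH^0_{\OO_f}$ to the Wakimoto resolution \eqref{Wakimoto resolution of Weyl modules}. Cohomology vanishing on each summand $\affWak{w\circ\lambda}$ (either via \cite{Ara05} or Theorem \ref{Cohomology vanishing for all levels}) produces a resolution of $\HH^0_{\OO_f}(\weyl^\kk_\lambda)$ by Wakimoto modules $\affWak{w\circ\lambda,f}$ over $\W^\kk(\OO_f)$. At generic $\kk$, the standard argument for Wakimoto-type constructions identifies the intersection of kernels of the $S^f_{i,\lambda}$ with the cyclic submodule of $\affWak{\lambda,f}$ generated by the highest-weight vector, and shows it is irreducible by tracking generic non-vanishing of contravariant forms.

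For each of the four isomorphisms, the key point is that Theorems \ref{thm:min_to_rect} and \ref{thm: pqhrRectoSub} were proven by exhibiting explicit isomorphisms of free field algebras (the partial Fourier transform $\iota$, the linear change of variables \eqref{isomorphism of FFR}, and the automorphisms of $\ff{2,3,5}$) that carry the screening currents $S^f_i$ of one $\W$-algebra onto those of the other. These automorphisms are defined on the underlying free field algebra, so tensoring with $\Fock{\lambda}$ identifies $\affWak{\lambda,f}$ with $\affWak{\lambda,f'}$ for the two orbits in question. By Proposition \ref{prop: screening for modules}, the operators $S^f_{i,\lambda}$ controlling $\HH^0_{\OO_f}(\weyl^\kk_\lambda)$ are simply iterated integrals of the currents $S^f_i(z)$, so identifying currents forces the identification of $S^f_{i,\lambda}$ with $S^{f'}_{i,\lambda}$. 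Combined with Corollary \ref{Free field realization of W-modules} this yields each isomorphism, provided one checks that the two-step composition on the right is indeed computed by successively applying the reductions at the level of Wakimoto representations; concretely, one would apply the first $\fH{\bullet}$ to the resolution of $\weyl^\kk_\lambda$ to land in a Wakimoto resolution over the intermediate $\W$-algebra, and then apply the second $\fH{\bullet}$.

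The main obstacle will be the bookkeeping required in this last step: for a composition such as $\fHdeg{2}{0}\circ\fHdeg{1,1,2}{0}$ one must verify that the pyramid data used in the second reduction matches the Wakimoto realization of $\pW{1,1,2}$ produced by the first, so that the composed screening operators on $\affWak{\lambda}$ reproduce exactly the row of Table \ref{tab: list of Wakimoto realizations of Walg} associated to the larger orbit. The proofs of Theorems \ref{thm:min_to_rect} and \ref{thm: pqhrRectoSub} already do this at $\lambda=0$; the only additional input needed is that the free field automorphisms and partial Fourier transforms used there are $\lambda$-independent, which is immediate from their formulas. Once this coherence is verified, the same automorphisms transport the screening operators $S^f_{i,\lambda}$ verbatim, and the corresponding intersections of kernels agree as $\W^\kk(\OO)$-modules.
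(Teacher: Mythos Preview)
Your proposal is correct and follows essentially the same route as the paper: both realize each side via Corollary~\ref{Free field realization of W-modules} as an intersection of kernels of the iterated screenings $S^f_{i,\lambda}$ on $\affWak{\lambda,f}$, and then transport these operators through the very same $\lambda$-independent free field automorphisms (the partial Fourier transform~$\iota$, the change of variables~\eqref{isomorphism of FFR}, etc.) used in the proofs of Theorems~\ref{thm:min_to_rect} and~\ref{thm: pqhrRectoSub}, invoking Proposition~\ref{prop: screening for modules} to pass from the currents $S^f_i(z)$ to their iterated products. The paper's proof is in fact more terse than yours---it does not spell out the simplicity claim at all and simply records the chain of screening-kernel identifications for one case, leaving the rest to the reader---so your discussion of simplicity via the reduced Wakimoto resolution and your remarks on the bookkeeping of pyramid data are additional detail rather than a different method.
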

\begin{proof}
Let us show the first isomorphism, i.e. 
\begin{align}
    \fHdeg{2,2}{0}(\weyl_\lambda^k)\simeq \fHdeg{2}{0}(\fHdeg{1,1,2}{0}(\weyl_\lambda^k)).
\end{align}
Thanks to Corollary \ref{Free field realization of W-modules}, the isomorphism follows from the identification of the screening operators appearing in the proofs of Theorem \ref{thm:min_to_rect} and Proposition \ref{prop: screening for modules} as follows:
\begin{equation}
    \begin{split}
        &\fHdeg{2}{0}(\fHdeg{1,1,2}{0}(\weyl_\lambda^k))
        \simeq \bigcap_{i=1}^3 \ker \longstick{[S_{i,\lambda}^{\ \ydiagram{1,1,2}}]}{\fHdeg{2}{0}(\pWak{1,1,2}{\lambda})}\\
        &\hspace{1cm}\simeq \bigcap_{i=1}^3 \ker \longstick{\widetilde{S}_{i,\lambda}^{\ \ydiagram{1,1,2}}}{\ff{1,4}\otimes \heis}
        \overset{\iota}{\simeq}\bigcap_{i=1}^3 \ker \longstick{ {S}_{i,\lambda}^{\ \ydiagram{2,2}}}{\ff{1,3}\otimes \heis}
        \simeq \fHdeg{2,2}{0}(\weyl_\lambda^k)
    \end{split}
\end{equation}
where
\begin{align}
    \widetilde{S}_{i,\lambda}^{\ \ydiagram{1,1,2}}=\int_{\Gamma} \prod_{j=1}^n \ts{i}{1,1,2}(z_j)\ \dd z_1\cdots \dd z_{n}
\end{align}
with 
\begin{align}
  n=h_i(\lambda)+1,\quad \ts{i}{1,1,2}(z):=Y(P_i \fockIO,z),
\end{align}
see \eqref{coefficients in the second reduction} for $P_i$'s. This completes the proof for the first isomorphism.
The remaining cases are shown in the same manner by using the identification of the screening operators obtained in the proof of Theorem \ref{thm: pqhrRectoSub} and \ref{thm:min_to_rect}.
\end{proof}

\printbibliography
\end{document}